\newtheorem{Theorem}{Theorem}
\newtheorem{Lemma}{Lemma}
\newtheorem{Corollary}{Corollary}
\newtheorem{Assume}{Assumption}
\newtheorem{Proposition}{Proposition}
\newtheorem{Definition}{Definition}
\theoremstyle{break}
\newcommand{\Real}{\mathbb{R}}
\newcommand{\Graph}{\mathcal{G}}
\newcommand{\Graphsans}[1]{\Graph \backslash \{#1\}}
\newcommand{\JiA}[3] {J_{#1,#2}{(#3)}}
\newcommand{\argmax}{\text{argmax}}
\newcommand{\comment}[1]{}
\newcommand{\bound}{\mathcal{C}}
\newcommand{\eqdef}{\overset{\Delta}{=}}
\newcommand{\pr}{\mathbb{P}}
\newcommand{\gH}{\mathcal{H}}
\newcommand{\I}{\mathcal{I}}
\newcommand{\ignore}[1]{\relax}
\newcommand{\E}{\mathbb{E}}
\newcommand{\sectionline}{\\ \par \noindent\hfil\rule[.20\baselineskip]{\textwidth}{.7pt}\hfil \\ \par}
\newcommand{\remark}{{\bf Remark : }}
\newcommand{\vvert}{\vspace{.1in}}
\begin{document}


\title{Correlation Decay in Random Decision Networks
\footnote{Preliminary version of this paper will appear in Proceedings of
ACM-SIAM Symposium on Discrete Algorithms, 2010,
Austin, TX }}
\author{
{\sf David Gamarnik }
\thanks{Operations Research Center, LIDS, and Sloan School of Management, MIT,
Cambridge, MA,  02139, e-mail:
{\tt gamarnik@mit.edu}}
\thanks{Research supported by the NSF grant CMMI-0726733}
\and
{\sf David A. Goldberg}
\thanks{Operations Research Center, MIT, Cambridge, MA, 02139, e-mail: {\tt
dag3141@mit.edu}}
\and
{\sf Theophane Weber}
\thanks{Operations Research Center and LIDS, MIT, Cambridge, MA, 02139, e-mail: {\tt
theo\_w@mit.edu}}
}
\maketitle
\begin{abstract}

We consider a decision network on an undirected graph in which each node corresponds to a decision variable, and each node and edge of the graph is associated with a reward function whose value depends only on the variables of the corresponding nodes. The goal is to construct a decision vector which maximizes the total reward.
This decision problem encompasses a variety of models, including maximum-likelihood inference in graphical models (Markov Random Fields),
combinatorial optimization on graphs,  economic team theory and statistical physics.
The network is endowed with a probabilistic structure in which costs are sampled from a distribution.
Our aim is to identify sufficient conditions on the network structure and cost distributions to guarantee average-case polynomiality of the
underlying optimization problem. Additionally, we wish to characterize the efficiency of a decentralized solution generated on the basis of local information.

We construct a new decentralized algorithm called \emph{Cavity Expansion} and establish its theoretical performance for a variety of graph models and reward function distributions. Specifically, for certain classes of models we prove that our algorithm is able to find near optimal solutions with high probability
in a decentralized way. The success of the algorithm is based on the network exhibiting a certain correlation decay (long-range independence) property
and we prove that this property is indeed exhibited by the models of interest.
Our results have the following surprising implications in the area of average case complexity of algorithms. Finding the largest independent (stable) set of a graph is a well known NP-hard optimization problem for which no polynomial time approximation scheme is possible even for graphs with largest connectivity equal to three, unless P=NP. Yet we show that the closely related maximum weighted independent set problem for the same class of graphs admits a PTAS when the weights are independent
identically distributed with the exponential distribution. Namely, randomization of the reward function turns an NP-hard problem into a tractable one.
\end{abstract}

\section{Introduction and literature review}
We consider a team of agents working in a networked structure $(V,E)$, where $V$ is a set of agents, and $E$ the set of edges of the network, each edge indicating potential local interactions between agents. Each agent $v$ has to make a decision $x_v$ from a finite set, and the team incurs a total reward $F(x)=\sum \Phi_v(x_v)+\sum_{u,v \in E} \Phi_{u,v}(x_u,x_v)$. The goal of each agent is to choose its decision $x_v$ so that the total reward $F$ is maximized. This model subsumes many models in a variety of fields including economic team theory, statistical inference, combinatorial optimization on graphs and statistical physics.

As an example, common models in the area of statistical inference are graphical models, Bayesian networks, and Markov Random Fields (MRF) (see~\cite{wainwright2008gme} for an overview of inference techniques for graphical models, and~\cite{mezard:ipa,hartmann2005ptc} for a comprehensive study of the relations between statistical physics, statistical inference, and combinatorial optimization). One of the key objects in such a model is the state which achieves the mode of the density, namely, the state which maximizes the a priori likelihood. The problem of finding such a state can be cast as a problem defined above.

In the economic team theory (see~\cite{marschak1955ett,radner1962tdp,marschak1972ett}), an interesting question was raised in~\cite{rusmevichientong2001abp}: what is the cost of decentralization in a chain of agents. In other words, if we assume that each node only receives local information on the network topology and costs, what kind of performance can the team attain? Cast in our framework, this the problem of finding the maximum of $F(x)$ by means of local (decentralized) algorithms.

Combinatorial optimization problems typically involve the task of finding a solution which minimizes or maximizes some objective function subject to various constraints supported by the underlying graph. Examples include the problem of finding a largest independent set, minimum and maximum cut problems,
max-KSAT problems, etc. Finding an optimal solution in many such problems is a special case of the problem of finding $\max_x F(x)$ described above.

Finally, a key object in statistical physics models is the so-called ground state -- a state which achieves the minimum possible energy. Again, finding such an object reduces to solving the problem described above, namely solving the problem $\max_x F(x)$ ($\min_x -F(x)$ to be more precise).

The combinatorial optimization nature of the decision problem $\max_x F(x)$ implies that the problem of finding $x^*=\argmax_x F(x)$
is generally NP-hard, even for the special case when the decision space for each agent consists only of two elements. This motivates a search for approximate methods which find solutions that theoretically or empirically achieve good proximity to optimality.
Such methods usually differ from field to field. In combinatorial optimization the focus has been on developing methods which achieve some provably guaranteed approximation level using a variety of approaches, including linear programming, semi-definite relaxations and purely combinatorial methods~\cite{Hochbaum}.
In the area of graphical models, researchers have been developing new families of distributed inference algorithms. One of the most studied techniques is the Belief Propagation (BP) algorithm ~\cite{lauritzen1996gm,JordanGraphicalModels,YedidiaFreemanWeiss}. Since the algorithm proposed in the present paper
bears some similarity and is motivated by BP algorithm, we provide below a brief summary of known theoretical facts about BP.

The BP algorithm is known to find an optimal solution $x^*$ when the underlying graph is a tree, but may fail to converge, let alone produce an optimal (or correct) solution when the underlying graph contains cycles. Despite this fact, it often has excellent empirical performance. Also, in some cases, BP can be proven to produce an optimal solution, even when the underlying graph contains cycles. In a framework similar to ours, Moallemi and Van Roy~\cite{moallemi2006cms} show that BP converges and produces an optimal solution when  the action space is continuous and the cost functions $\Phi_{u,v}$ and $\Phi_u$ are quadratic and convex. Some generalization to generally convex functions are obtained in~\cite{moallemi2007cms}. Other cases where BP produces optimal solutions include Maximum Weighted Bipartite Matching \cite{sanghavi2007elr,bayati2008ecm,bayati2008mpm} (for matchings), Maximum Weighted Independent Sets problems where the LP relaxation is tight (~\cite{sanghavi2008mpm}), network flow problems~\cite{GamShahWei}, and more generally, optimization problems defined on totally unimodular constraint matrices~\cite{chertkov2008ebp}.

In this paper, we propose a new message-passing like algorithm for the problem of finding $x^*=\argmax\: F(x)$, which we call the \emph{Cavity Expansion} (CE) algorithm, and obtain sufficient conditions for the asymptotic optimality of our algorithm based on the so-called correlation decay property. Our algorithm draws upon several recent ideas. On the one hand, we rely on a technique used recently for constructing approximate counting algorithms. Specifically, Bandyopadhyay and Gamarnik~\cite{BandyopadhyayGamarnikCounting}, and Weitz~\cite{weitzCounting} 
proposed approximate counting algorithms which are based on local (in the graph-theoretic sense) computation. 
Provided that the model exhibits a form of correlation decay these algorithms are approximate counting algorithms.
The approach was later extended in Gamarnik and Katz~\cite{gamarnik2007cda},\cite{gamarnik2007daa}, Bayati et al \cite{bayati2007sda}, Jung and Shah~\cite{jung2006ibp}. The present work develops a similar approach but for the \emph{optimization} problems. The description of the CE algorithm begins by introducing a notion of a \emph{cavity} $B_v(x)$ for each node/decision pair $(v,x)$ (the notion of cavity was heavily used recently in the statistical physics literature~\cite{mezard2003cmz,MezardIndSets2004}). It is also called \emph{bonus} in the relevant papers~\cite{Aldous:assignment00},\cite{AldousSteele:survey},\cite{gamarnikMaxWeightIndSet}.
$B_v(x)$ is defined as the difference between the optimal reward for the entire network when the action in $v$ is $x$ versus the optimal reward when the action in the same node is $0$ (any other base action can be taken instead of $0$). It is easily shown that knowing $B_v(x)$ is equivalent to solving the original decision problem. We  obtain a recursion expressing the cavity $B_v(x)$ in terms of cavities of the neighbors of $v$ in suitably modified sub-networks of the underlying network. The algorithm then proceeds by expanding this recursion in the breadth-first search manner for some designed number of steps $t$, thus constructing an associated computation tree with depth $t$. At the initialization point the cavity values are assigned some default value. Then the approximation value $\hat B_v(x)$ is computed using this computation tree. If this computation was conducted for $t$ equalling roughly the length $L$ of the longest self-avoiding path of the graph, it would result in exact computation of the cavity values $B_v(x)$. Yet the computation effort associated with this scheme is exponential in  $L$, which itself often grows linearly with the size of the graph.

The key insight of our work is that in many cases, the dependence of the cavity $B_v(x)$ on cavities associated with other nodes in the computation tree dies out exponentially fast as a function of the distance between the nodes. This phenomenon is generally called \emph{correlation decay}. In earlier work~\cite{aldous1992ara,Aldous:assignment00,AldousSteele:survey,gamarnikMaxWeightIndSet,gamarnik2008rga}, it is shown that some optimization problems on locally tree-like graphs with random costs are tractable as they exhibit the correlation decay property. This is precisely our approach: we show that if we compute $B_v(x)$ based on the computation tree with only constant depth $t$, the resulting error $\hat B_v(x)-B_v(x)$ is exponentially small in $r$. By taking $r=O(\log(1/\epsilon))$ for any target accuracy $\epsilon$, this approach leads to an $\epsilon$-approximation scheme for computing the optimal reward $\max_x F(x)$. Thus, the main associated technical goal is establishing the correlation decay property for the associated computation tree.

We indeed establish that the correlation decay property holds for several classes of decision networks associated with random reward functions $\Phi=(\Phi_v,\Phi_{v,u})$. Specifically, we give concrete results for the cases of uniform and Gaussian distributed functions for unconstrained optimization in networks with bounded connectivity (graph degree) $\Delta$. We also consider exponentially distributed (with parameter $1$) weights for the Maximum Weighted Independent Set problem. In this setting, the combination of CE (a message passing style algorithm) and a randomized setting has a particularly interesting implication for the theory of average case analysis of combinatorial optimization. Unlike some other NP-hard problems, finding the MWIS of a graph does not admit a constant factor approximation algorithm for general graphs: Hastad~\cite{hastad1996cha} showed that for every $0<\delta<1$ no $n^{1-\delta}$ approximation algorithm can exist for this problem unless $P=NP$,  where $n$ is the number of nodes. Even for the class of graphs with degree at most $3$, no factor $1.0071$ approximation algorithm can exist, under the same complexity-theoretic assumption, see Berman and Karpinski~\cite{BermanKarpinski}. In contrast, we show when $\Delta\le 3$ and the node weights are independently generated with a parameter $1$ exponential distribution, the problem of finding the maximum weighted independent set admits a PTAS. Thus, surprisingly, introducing random weights translates a combinatorially intractable problem into a tractable one. We further extend these results to the case $\Delta>3$, but for different node weight distributions.

The rest of the paper is organized as follows. In section \ref{section:initial notations}, we describe the general model and notations. In section \ref{sec:MainResults}, we present our main results. In section~\ref{sec:Cav}, we derive the cavity recursion, an exact recursion for computing the cavity of a node in a decision network, and from it develop the Cavity Expansion algorithm. In section~\ref{sec:corrdecay}, we prove that the correlation decay property implies optimality of the cavity recursion and local optimality of the solution. The rest of the paper is devoted to identifying sufficient conditions for correlation decay (and hence, optimality of the CE algorithm): in section~\ref{sec:coupling}, we show how a coupling argument can be used to prove the correlation decay property for the case of uniform and Gaussian weight distributions, and in section~\ref{section:MWIS}, we establish the correlation decay property for the MWIS problem using a different argument based on monotonocity. Concluding thoughts are in section~\ref{section:ccl}.

\section{Model description and notations}\label{section:initial notations}

Consider a decision network $\Graph=(V,E,\Phi,\chi)$. Here $(V,E)$ is an undirected simple graph in which each node $u \in V$ represents an agent, and edges $e \in E$ represent a possible interaction between two agents. Each agent makes a decision $x_u\in \chi\triangleq\{0,1,\ldots,T-1\}$. For every $v\in V$, a function $\Phi_v:\chi\rightarrow \Real$ is given. Also for every edge $e=(u,v)$ a function $\Phi_e:\chi^2\rightarrow \Real\cup \{-\infty\}$ is given. 
The inclusion of $-\infty$ into the range of $\Phi_e$ is needed in order to model the ``hard constraints" in the MWIS problem - prohibiting two ends of an edge to belong to an independent set. 
Functions $\Phi_v$ and $\Phi_e$ will be called \emph{potential functions} and \emph{interaction functions} respectively. Let $\Phi=((\Phi_v)_{v\in V}, (\Phi_e)_{e\in E})$. A vector $\bold{x}=(x_1,x_2,\ldots,x_{|V|})$ of actions is called a solution for the decision network. The value of solution $\bold{x}$ is defined to be $F_\Graph(\bold{x})= \sum_{(u,v)\in E} \Phi_{u,v}(x_u,x_v) + \sum_v \Phi_v(x_v)$. The quantity $J_\Graph\eqdef\max_{\bold{x}} F_\Graph(\bold{x})$ is called the (optimal) value  of the network $\Graph$. A decision $\bold{x}$ is optimal if $F_\Graph(\bold{x})=J_\Graph$.

In a Markov Random Field (MRF), a set of random variables $\bold X=(X_1,\ldots,X_n)$ is assigned a probability $\pr(\bold X=\bold x)$ proportional to $\exp(F_\Graph(\bold x))$. In this context, the quantity $F_\Graph(\bold x)$ can be considered as the log-likelihood of assignment $\bold x$, and maximizing it corresponds to finding a maximum a posterior assignment of the MRF defined by $F_\Graph$.

The main focus of this paper will be on the case where $\Phi_v(x),\Phi_e(x,y)$ are random variables (however, the actual realizations of the random variables are observed by the agents, and their decisions depend on the values  $\Phi_v(x)$ and $\Phi_e(x,y)$). While we will usually assume independence of these random variables when $v$ and $e$ vary, we will allow dependence for the same $v$ and $e$ when we vary the decisions $x,y$. The details will be discussed when we proceed to concrete examples.

\subsection{Examples}

\subsubsection{Independent set}\label{example:IS}

Suppose the nodes of the graph are equipped with weights $W_v\ge 0, v\in V$. A set of nodes $I\subset V$ is an independent set if $(u,v)\notin E$ for every $u,v\in I$. The weight of an (independent) set $I$ is $\sum_{u\in I}W_u$. The maximum weight independent set problem is the problem of finding the independent set $I$ with the largest weight. It can be recast as a decision network problem by setting $\chi=\{0,1\}, \Phi_e(0,0)=\Phi_e(0,1)=\Phi_e(1,0)=0,\Phi_e(1,1)=-\infty, \Phi_v(1)=W_v, \Phi_v(0)=0$.

\subsubsection{Graph Coloring}

An assignment $\phi$ of nodes $V$ to colors $\{1,\ldots,q\}$ is defined to be proper coloring if no monochromatic edges are created. Namely, for every edge $(v,u)$, $\phi(v)\ne \phi(u)$. Suppose each node/color pair $(v,x)\in V\times \{1,\ldots,q\}$ is equipped with a weight $W_{v,x}\ge 0$. The (weighted) coloring problem is the problem of finding a proper coloring $\phi$ with maximum total weight $\sum_v W_{v,\phi(v)}$. In terms of decision network framework, we have $\Phi_{v,u}(x,x)=-\infty, \Phi_{v,u}(x,y)=0, \forall x\ne y\in \chi=\{1,\ldots,q\}, (v,u)\in E$ and $\Phi_v(x)=W_{v,x}, \forall v\in V,x\in \chi$.

\subsubsection{MAX 2-SAT}

Let $(Z_1,\ldots,Z_n)$ be a set of  boolean variables. Let $(C_1,\ldots,C_m)$ be a list of clauses of the form $(Z_i \vee Z_j)$, $(Z_i \vee \overline{Z_j})$, $(\overline{Z_i}\vee Z_j)$ or $(\overline{Z_i} \vee \overline{Z_j})$. The MAX-2SAT problem consists in finding an assignment for binary variables $Z_i$ which maximizes the number of satisfied clauses $C_j$. In terms of a decision network, take $V=\{1,\ldots,n\}$, $E=\{(i,j): \text{$Z_i$ and $Z_j$ appear in a common clause}\}$, and for any $k$, let $\Phi_k(x,y)$ to be $1$ if the clause $C_k$ is satisfied when $(Z_i,Z_j)=(x,y)$ and $0$ otherwise. Let $\Phi_v(x)=0$ for all $v,x$.

\subsubsection{MAP estimation}

In this example, we see a situation in which the reward functions are naturally randomized.
Consider a graph $(V,E)$ with $|V|=n$ and $|E|=m$, a set of real numbers $\bold{p}=(p_1,\ldots,p_n)\in [0,1]^{n}$, and a family $(f_1,\ldots,f_{m})$ of functions such that for each $(i,j)\in E$, $f_{i,j}=f_{i,j}(o,x,y):\Real\times\{0,1\}^2\rightarrow \Real_+$ where $o\in\Real$ and $x,y \in \{0,1\}$. Assume that for each $(x,y)$, $f_{i,j}(\cdot,x,y)$ is a probability density function. Consider two sets $\bold{C}=(C_i)_{1\leq i\leq n}$ and $\bold O=(O_j)_{1 \leq j \leq m}$ of random variables, with joint probability density

$$P(\bold O,\bold C)=\prod_i \: {p_i}^{c_i} (1-p_i)^{1-c_i} \prod_{(i,j)\in E} f_{i,j}(o_{i,j},c_i,c_j)$$

\noindent $\bold{C}$ is a set of Bernoulli random variables (``causes") with probability $P(C_i=1)=p_i$, and $\bold{O}$ is a set of continuous ``observation" random variables. Conditional on the cause variables $\bold C$, the observation variables $\bold O$ are independent, and each $O_{i,j}$ has density $f_{i,j}(o,c_i,c_j)$. Assume the variables $\bold O$ represent observed measurements used to infer on hidden causes $\bold C$. Using Bayes's formula, given observations $\bold O$, the log posterior probability of the causes variables $\bold C$ is equal to:
\begin{align*}
\log P(\bold C = \bold c \: | \: \bold O=\bold o)= K + \sum_i \Phi_i(c_i)+\sum_{i,j\in E} \Phi_{i,j}(c_i,c_j)
\end{align*}
where
\begin{align*}
\Phi_i(c_i)=&\log(p_i/(1-p_i)) c_i\\
\Phi_{i,j}(c_i,c_j)=&\log(f_{i,j}(o_{i,j},c_i,c_j))
\end{align*}
where $K$ is a random number which does not depend on $\bold c$.
Finding the maximum a posteriori values of $\bold C$ given $\bold O$ is equivalent to finding the optimal solution of the decision network $\Graph=(V,E,\Phi,\{0,1\})$. Note that the interaction functions $\Phi_{i,j}$ are naturally randomized, since $\Phi_{i,j}(x,y)$ is a continuous random variable with distribution $$d\pr(\Phi_{i,j}(x,y)= t)=\text{e}^t \sum_{x',y' \in \{0,1\}}  d\pr(f_{i,j}(o,x',y')=\text{e}^t)$$

\subsection{Notations}
For any two nodes $u$,$v$ in $V$, let $d(u,v)$ be  the length (number of edges) of the shortest path between $u$ and $v$. Given a node $u$ and integer $r\ge 0$, let $\mathcal{B}_{\Graph}(u,r)\eqdef\{v\in V: d(u,v)\leq r\}$ and $\mathcal{N}_{\Graph}(u)\eqdef\mathcal{B}(u,1)\backslash \{u\}$ be the set of neighbors of $u$. For any node $u$, let $\Delta_{\Graph}(u)\eqdef |\mathcal{N}_{\Graph}(u)|$ be the number of neighbors of $u$ in $\mathcal{G}$. Let $\Delta_{\Graph}$ be the maximum degree of graph $(V,E)$; namely, $\Delta_{\Graph}=\max_v |\mathcal{N}(v)|$. Often we will omit the reference to the network $\Graph$ when it is obvious from the context.\vvert

For any subgraph $(V',E')$ of $(V,E)$ (i.e. $V'\subset V$, $E'\subset E\cap V'^2$), the subnetwork $\Graph'$ induced by $(V,E)$ is the network $(V',E',\Phi',\chi)$, where $\Phi'=((\Phi_v)_{v\in V'},(\Phi_e)_{e\in E'})$.\vvert

Given a subset of nodes $\bold{v}=(v_1,\ldots,v_k)$, and $\bold{x}=(x_1,\ldots,x_k)\in \chi^k$, let $J_{\Graph,\bold{v}}(\bold{x})$ be the optimal value when the actions of nodes $v_1,\ldots,v_k$ are fixed to be $x_{1},\ldots,x_{k}$ respectively: $J_{\Graph,\bold{v}}(\bold{x})=\max_{\bold{x}: x_{v_i}=x_i, 1\le i\le k}F_\Graph(\bold{x})$. Given $v\in V$ and $x\in \chi$, the quantity $B_{\Graph,v}(x)\eqdef J_{\Graph,v}(x)-J_{\Graph,v}(0)$ is called the \emph{cavity} of action $x$ at node $v$. Namely it is the difference of optimal values when the decision at node $v$ is set to $x$ and $0$ respectively (the choice of $0$ is arbitrary). The cavity function of $v$ is $B_{\Graph,v}=(B_{\Graph,v}(x))_{x\in \chi}$. Since $B_{\Graph,v}(0)=0$, $B_{\Graph,v}$ can be thought of as element of $\Real^{T-1}$. In the important special case $\chi=\{0,1\}$, the cavity function is a scalar $B_{\Graph,v}=J_{\Graph,v}(1)-J_{\Graph,v}(0)$. In this case, if $B_{\Graph,v}>0$ (resp. $B_{\Graph,v}<0$) then $J_{\Graph,v}(1)>J_{\Graph,v}(0)$ and action $1$ (resp. action $0$) is optimal for $v$. When $B_{\Graph,v}=0$ there are optimal decisions consistent both with $x_v=0$ and $x_v=1$. Again, when $\Graph$ is obvious from the context, it will be omitted from the notation. \vvert

For any network $\Graph$, we call $M(\Graph)=\max(|V|,|E|,|\chi|)$ the size of the network. Since we will exclusively consider graphs with degree bounded by a constant, for all practical purposes we can think of $|V|$ as the size of the instance. When we say polynomial time algorithm, we mean that the running time of the algorithm is upper bounded by a polynomial in $|V|$.
An algorithm $\mathcal{A}$ is said to be an $\epsilon$-loss additive approximation algorithm for the problem of finding the optimal decision if for any network $\Graph$ it produces in polynomial time a decision $\hat x$ such that $J_{\Graph}-F(\hat x)<\epsilon$.
If all cost functions are positive, the algorithm $\mathcal{A}$ is said to be an $(1+\epsilon)$-factor multiplicative approximation algorithm if it outputs a solution $\hat x$ such that $J_{\Graph}/F(\hat{x})<1+\epsilon$.
 We call such an algorithm an additive (resp. multiplicative) PTAS (Polynomial Time Approximation Scheme) if it is an  $\epsilon$-loss (resp. $(1+\epsilon)$-factor) additive (resp. multiplicative) approximation factor algorithm for every $\epsilon>0$ and runs in time which is polynomial in $|V|$. An algorithm is called an FPTAS (Fully Polynomial Time Approximation Scheme) if it runs in time which is polynomial in $n$ and $1/\epsilon$. For our purposes another relevant class of algorithms is EPTAS. This is the class of algorithms which produces $\epsilon$ approximation in time $O(|V|^{O(1)}g(\epsilon))$, where $g(\epsilon)$ is some function independent from $n$. Namely, while it is not required that the running time of the algorithm is polynomial in $1/\epsilon$, the $1/\epsilon$ quantity does not appear in the exponent of $n$.  Finally, in our context, since the input is random, we will say that an algorithm is an additive (resp. multiplicative) PTAS with high probability if for all $\epsilon>0$ it
outputs in time polynomial in $|V|$ a solution $\hat{x}$ such that $\pr(J_{\Graph}-F(\hat{x})>\epsilon)<\epsilon$ (resp. $\pr(F(\hat x)/J_{\Graph}>1+\epsilon) \le \epsilon$); FPTAS and EPTAS w.h.p. are similarly defined. Since our algorithm provide probabilistic guarantee, one may wonder whether FPRAS (Fully Polynomial Randomized Approximtion Scheme) would be a more appropriate framework. The typical setting for FPRAS is, however, a deterministic problem input and the randomization is associated purely with algorithm. In our setting however, the setting itself is random, though the algorithms, with the exception of MWIS, are deterministic. 

\section{Main results}\label{sec:MainResults}
In this section we state our main results. The first two results relate to decision networks with uniformly and normally distributed costs, respectively, without any combinatorial constraints on the decisions. The last set of results corresponds to the MWIS problem, which does incorporate the combinatorial constraint of the independence property.

\subsection{Uniform and Gaussian Distributions}\label{sec:resunifom}
Given $\Graph=(V,E,\Phi,\{0,1\})$, suppose that for all $u\in V$, $\Phi_u(1)$ is uniformly distributed on $[-I_1,I_1]$, $\Phi_u(0)=0$, and that for every $e \in E$, $\Phi_e(0,0),\Phi_e(1,0),\Phi_e(0,1)$ and $\Phi_e(1,1)$ are all independent and uniformly distributed on $[-I_2,I_2]$, for some $I_1,I_2>0$. Intuitively, $I_{1}$ quantifies the 'bias' each agent has towards one action or another, while $I_{2}$ quantifies the strength of interactions between agents.

\begin{Theorem}\label{thm:Uniform}
Let $\beta=\frac{5I_{2}}{2I_{1}}$. If $\beta(\Delta-1)^2<1$, then there exists an additive FPTAS for finding $J_\Graph$ with high probability.
\end{Theorem}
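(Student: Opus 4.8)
The plan is to follow the programme sketched in the introduction: reduce the statement to a correlation decay estimate, and then establish that estimate by a coupling argument exploiting the randomness of the costs. By Section~\ref{sec:Cav}, the cavity $B_{\Graph,v}$ obeys an exact recursion which, unrolled in breadth-first order from $v$ for $t$ steps, produces a computation tree on $O\big(\Delta(\Delta-1)^{t-1}\big)$ vertices; the CE algorithm evaluates this tree from the leaves upward after assigning a default value (say $0$) to the cavities at the leaves, yielding an estimate $\hat B_{\Graph,v}$ of $B_{\Graph,v}$ and, from the collection of these, an estimate of $J_\Graph$. By Section~\ref{sec:corrdecay} it suffices to show that $\E\big|\hat B_{\Graph,v}-B_{\Graph,v}\big|$ is bounded by a quantity decaying geometrically in $t$ at a rate depending only on $\Delta,I_1,I_2$. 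Granting such a bound $\delta(t)$, one takes $t=\Theta\big(\log(|V|/\epsilon^2)\big)$ so that $\delta(t)\le \epsilon^2/|V|$; a union bound over the $O(|V|)$ vertices, together with Markov's inequality and the local-optimality estimate of Section~\ref{sec:corrdecay}, produces a decision whose value is within $\epsilon$ of $J_\Graph$ with probability at least $1-\epsilon$. Since $t=O(\log|V|+\log(1/\epsilon))$, the running time $(\Delta-1)^{O(t)}$ is polynomial in $|V|$ and in $1/\epsilon$, as required of an additive FPTAS.

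Two ingredients feed into the correlation-decay bound. The first is the explicit one-step recursion together with a uniform bound on cavities. For $\chi=\{0,1\}$ the recursion reads $B_{\Graph,v}=\Phi_v(1)+\sum_i h_i\big(B_{\Graph_i,u_i}\big)$, where $u_i$ ranges over the at most $\Delta$ neighbours of $v$, $\Graph_i$ is the modified subnetwork attached to $u_i$ (in which $u_i$ has at most $\Delta-1$ neighbours and whose costs do not involve the edge $(u_i,v)$), and
\[
h_i(z)=\max\big(\Phi_{u_i v}(0,1),\,\Phi_{u_i v}(1,1)+z\big)-\max\big(\Phi_{u_i v}(0,0),\,\Phi_{u_i v}(1,0)+z\big).
\]
From this expression one reads off that $h_i$ is monotone and $1$-Lipschitz, that $|h_i(z)|\le 2I_2$ for every $z$, and hence that $|B_{\Graph,v}|\le I_1+2I_2\Delta$; in particular the error incurred at each leaf of the CE tree is at most a deterministic constant $C_0=C_0(\Delta,I_1,I_2)$. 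The second, and the heart of the matter, is a contraction estimate for $h_i$ \emph{in expectation}. Since $h_i'(z)=\mathbf 1\{z>\Phi_{u_i v}(0,1)-\Phi_{u_i v}(1,1)\}-\mathbf 1\{z>\Phi_{u_i v}(0,0)-\Phi_{u_i v}(1,0)\}$, the map $h_i$ is constant outside a single ``active window'' of random length $W_i\le 4I_2$ determined solely by the four costs on the edge $(u_i,v)$, with $\E W_i=O(I_2)$. Consequently, if $z$ and $z'$ are inputs that are independent of those edge costs and that both contain the potential $\Phi_{u_i}(1)\sim\mathrm{Unif}[-I_1,I_1]$ as an additive term with $z-z'$ independent of $\Phi_{u_i}(1)$ — which is exactly the situation for $z=B_{\Graph_i,u_i}$ and $z'=\hat B_{\Graph_i,u_i}$ — then conditioning on everything except $\Phi_{u_i}(1)$ and averaging it over its range of width $2I_1$ gives $\E\big|h_i(z)-h_i(z')\big|\le \tfrac{\E W_i}{2I_1}\,\E|z-z'|\le\beta\,\E|z-z'|$. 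This is the coupling step: the spread of the uniform potentials smooths the otherwise merely $1$-Lipschitz max-recursion in an independent direction, and a routine bound on $\E W_i$ is what fixes the constant $\beta=\tfrac{5I_2}{2I_1}$.

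Given this estimate I would propagate the error up the computation tree. At an internal vertex $w$ the error is at most $\sum_{u_i\text{ child of }w}\big|h_i(B_{\Graph_i,u_i})-h_i(\hat B_{\Graph_i,u_i})\big|$, and applying the contraction estimate to each term — the required independence and additive structure holding because the modified subnetworks hanging off distinct children are vertex-disjoint in the self-avoiding-walk unrolling, so $\Phi_{u_i}(1)$ is fresh relative to its own subtree, while linearity of expectation handles any correlation between terms for different children — bounds the expected error at $w$ by $\beta$ times the sum of the expected errors at its children. Iterating this from the leaves (error $\le C_0$) up to the root, and accounting conservatively for the branching of the tree (at most $\Delta$ children at the root, at most $\Delta-1$ below), yields a bound of the form $\E\big|\hat B_{\Graph,v}-B_{\Graph,v}\big|\le C_0\cdot\mathrm{poly}(\Delta)\cdot\rho^{\,\Theta(t)}$ with a geometric factor $\rho$ that is strictly less than one under the hypothesis $\beta(\Delta-1)^2<1$. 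Feeding this into the reduction of the first paragraph completes the argument.

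I expect the contraction estimate — and the bookkeeping needed to make its hypotheses rigorous on general (cyclic) graphs — to be the main obstacle. The recursion map is only $1$-Lipschitz in the worst case, so the geometric decay cannot come from contractivity of the map itself and must be extracted entirely from the randomness of the costs; the delicate point is that the smoothing has to come from a source that is independent both of the active window of the max-operation being smoothed and of the error being propagated, with the node potential $\Phi_{u_i}(1)$ playing that role, and one must verify that this independence survives the self-avoiding-walk construction of Section~\ref{sec:Cav} (in which a given vertex, and hence its potential, may appear in many branches of the tree). A secondary, more routine step is the passage from the expected-error bound to the claimed with-high-probability additive FPTAS guarantee for the scalar $J_\Graph$, via the local-optimality estimate of Section~\ref{sec:corrdecay}, Markov's inequality, and a union bound over the vertices.
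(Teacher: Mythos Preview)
Your outline is correct and in fact establishes more than the theorem asks. The paper's proof proceeds through an abstract $(a,b)$-coupling framework (Definition~\ref{DDcouple}): it shows that $\pr\big(\mu_{u\leftarrow v}(z)\neq\mu_{u\leftarrow v}(z')\big)\le a+b|z-z'|$ with $a=I_2/(2I_1)$ and $b=1/(2I_1)$ for the uniform model (Lemma~\ref{UnifLemma}), combines this with the $1$-Lipschitz bound to obtain the two-term recursion $\E|B(r)-B'(r)|\le a\sum_j\E|B_j-B_j'|+b\sum_j\E|B_j-B_j'|^2$ (Lemma~\ref{CorrLemma}), and then tames the quadratic term via $|B_j-B_j'|\le(\Delta-1)K_Y$ with $K_Y=4I_2$; it is this last step that produces the second factor of $(\Delta-1)$ in the hypothesis. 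Your convolution argument is finer: since $|h_i(z)-h_i(z')|$ equals the length of $[z',z]\cap[\Phi^1,\Phi^2]$, averaging the uniform shift $\Phi_{u_i}(1)$ over $[-I_1,I_1]$ and using Fubini gives directly $\E|h_i(z)-h_i(z')|\le(\E W_i/2I_1)\,\E|z-z'|$, a purely linear recursion. This bypasses the quadratic term entirely and yields exponential correlation decay already under the condition $(\Delta-1)\,\E W_i/(2I_1)<1$, which (since $\E W_i=\E|Y_e|\le 2I_2/\sqrt{3}$) is strictly weaker than the paper's $\beta(\Delta-1)^2<1$.

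Two points to tighten. First, the claim that ``the modified subnetworks hanging off distinct children are vertex-disjoint'' is false for graphs with cycles --- all of the $\Graph(u,j,x)$ live on $V\setminus\{u\}$ --- but, as you immediately note, linearity of expectation makes this irrelevant; what you actually need is that $\Phi_{u_i}(1)$ is independent of $A_i,A_i'$ and of the window on the edge $(v,u_i)$, and this holds because $u_i$ is deleted from every deeper subnetwork in the recursion (Proposition~\ref{IndependenceFixLemma}). Second, the constant $5I_2$ is not $\E W_i$ (which is below $2I_2$); in the paper it arises as $I_2+K_Y=I_2+4I_2$ from the coupling-plus-quadratic route, so your per-edge contraction factor is genuinely smaller than $\beta$, and the stated hypothesis $\beta(\Delta-1)^2<1$ is merely sufficient for your factor $(\Delta-1)\E W_i/(2I_1)$ to be below $1$. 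Note also that your contraction requires $r-1\ge 1$ so that both $B_j(r-1)$ and $B_j'(r-1)$ carry $\Phi_{v_j}(1)$ additively; at the very last layer you fall back on the deterministic bound $|h_i(z)-h_i(z')|\le W_i\le 4I_2$, which is harmless.
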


Now we turn to the case of Gaussian costs. Assume that for any edge $e=(u,v)$ and any pair of action $(x,y)\in\{0,1\}^{2}$, $\Phi_{u,v}(x,y)$ is a Gaussian random variable with mean $0$ and standard deviation $\sigma_e$. For every node $v\in V$, suppose $\Phi_v(1)=0$ and that $\Phi_v(0)$ is a Gaussian random variable with mean $0$ and standard deviation $\sigma_p$. Assume that all rewards $\Phi_{e}(x,y)$ and $\Phi_{v}(x)$ are independent for all choices of $v,e,x,y$.

\begin{Theorem}\label{thm:Gauss}
Let $\beta=\sqrt{\frac{\sigma_e^2}{\sigma_e^2+\sigma_p^2}}$.
If $\beta(\Delta-1)+\sqrt{\beta(\Delta-1)^3}<1 $, then there exists an additive FPTAS for finding $J_\Graph$ with high probability.
\end{Theorem}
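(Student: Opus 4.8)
The plan is to derive Theorem~\ref{thm:Gauss} from the general framework developed earlier in the paper: by Sections~\ref{sec:Cav} and~\ref{sec:corrdecay} it suffices to prove that a decision network with these Gaussian rewards exhibits exponential correlation decay on its computation tree, uniformly over the nodes and with high probability over the draw of $\Phi$. First I would write out the cavity recursion in the binary case on a tree: if $v$ has children $u_1,\dots,u_d$ with $d\le\Delta-1$, then
$B_v = -\Phi_v(0) + \sum_{i=1}^{d} g_i(B_{u_i})$,
where $g_i(b) = \max\bigl(\Phi_{vu_i}(1,0),\,\Phi_{vu_i}(1,1)+b\bigr) - \max\bigl(\Phi_{vu_i}(0,0),\,\Phi_{vu_i}(0,1)+b\bigr)$. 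Each $g_i$ is $1$-Lipschitz, has total variation at most $\ell_i \eqdef \bigl|(\Phi_{vu_i}(1,0)-\Phi_{vu_i}(1,1)) - (\Phi_{vu_i}(0,0)-\Phi_{vu_i}(0,1))\bigr|$, and is constant outside an ``active interval'' of length $\ell_i$; note that $\ell_i$ is the absolute value of a centred Gaussian of variance $4\sigma_e^2$. The Cavity Expansion algorithm truncates this recursion at depth $t$, assigning default values at depth $t$, and I would bound the error $|\hat B_v^{(t)} - B_v|$ by a coupling: compare two copies of the depth-$t$ computation tree that share every weight but carry arbitrary leaf values (which, with high probability, are $O(\sigma_p + \Delta\sigma_e)$-bounded, since a cavity is always dominated by the node weight at $v$ and the edge weights incident to $v$).

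The core is a per-step probabilistic contraction estimate for the propagated perturbation $\delta_v$. From the recursion, $\delta_v \le \sum_{i=1}^{d} \min(\delta_{u_i},\ell_i)\,\mathbf{1}[E_i]$, where $E_i$ is the event that the segment between $B_{u_i}$ and $B_{u_i}'$ meets the active interval of $g_i$. The key observation is that $B_{u_i}$ contains the summand $-\Phi_{u_i}(0)$, a centred Gaussian of variance $\sigma_p^2$ that is independent both of $\Phi_{vu_i}$ (hence of the active interval and of $\ell_i$) and of the downstream perturbation $\delta_{u_i}$ (which depends only on the perturbed leaves deep inside the subtree of $u_i$, and on which $\Phi_{u_i}(0)$ has no effect since it cancels in $\delta_{u_i}$). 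Conditioning on everything but $\Phi_{u_i}(0)$, the conditional density of $B_{u_i}$ is at most $1/(\sqrt{2\pi}\,\sigma_p)$, so $\pr(E_i\mid\cdot)\le (\delta_{u_i}+\ell_i)/(\sqrt{2\pi}\,\sigma_p)$; averaging over $\Phi_{vu_i}$ this makes each edge transmit the perturbation with an effective strength measured by $\beta = \sqrt{\sigma_e^2/(\sigma_e^2+\sigma_p^2)}$. The delicate point is that the events $E_i$ are positively correlated with the magnitudes $\delta_{u_i}$ through the shared subtree randomness, so a naive first-moment iteration at rate $\beta(\Delta-1)$ is not self-contained; I would instead propagate two statistics of the perturbation distribution simultaneously (its $L^1$ size and a Cauchy--Schwarz second-moment surrogate), obtaining a two-dimensional recursion whose joint contractivity is precisely the stated condition $\beta(\Delta-1) + \sqrt{\beta(\Delta-1)^3} < 1$. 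Under this hypothesis the recursion contracts geometrically, yielding $\Exp\bigl[|\hat B_v^{(t)} - B_v|\bigr] \le C\rho^{\,t}$ for some $\rho<1$, with $C$ at most polylogarithmic in $n$ with high probability (Gaussian tails on the boundary cavities plus a union bound over the at most $n(\Delta-1)^t$ leaves involved).

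Finally I would assemble the FPTAS: choosing $t = \Theta(\log(n/\epsilon))$ forces $C\rho^{t} \le \epsilon^2/n$, so by Markov's inequality and a union bound over the $|V|$ nodes, with probability at least $1-\epsilon$ every cavity is computed within additive error $\epsilon$, and by the results of Section~\ref{sec:corrdecay} this yields a decision $\hat x$ with $J_\Graph - F(\hat x) = O(\Delta\epsilon)$, which after rescaling $\epsilon$ is the claimed additive guarantee. Each node's computation tree has size $(\Delta-1)^{O(t)} = (n/\epsilon)^{O(\log\Delta)}$, polynomial in $n$ and $1/\epsilon$ since $\Delta$ is constant, so the scheme is an FPTAS with high probability. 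I expect the main obstacle to be exactly the dependence between ``does this edge transmit the perturbation'' ($E_i$) and ``how large is the incoming perturbation'' ($\delta_{u_i}$): both are measurable with respect to overlapping blocks of the random landscape, so the clean geometric bound $\beta(\Delta-1)$ is unavailable directly, and it is the bookkeeping needed to control this correlation that both complicates the recursion and produces the nonlinear stability threshold in the statement; a secondary technical point is making the correlation-decay estimate hold uniformly over all nodes with high probability, which requires care with the tails of the boundary cavity values.
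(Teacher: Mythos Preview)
Your plan is essentially the paper's own route: establish an ``$(a,b)$-coupling'' bound of the form
\[
\pr\bigl(\mu_{u\leftarrow v}(x+\Phi_v)\neq\mu_{u\leftarrow v}(x'+\Phi_v)\bigr)\le a+b|x-x'|,
\]
feed it into a recursion on $e_r=\sup\E|B(r)-B'(r)|$, control the quadratic term $\E|B_j-B_j'|^2$ by going one level deeper and using $|\mu(x)-\mu(x')|\le\min(|x-x'|,|Y_e|)$, and read off the threshold from the characteristic root of $e_r\le a(\Delta-1)e_{r-1}+bK_\Phi(\Delta-1)^3 e_{r-2}$. What you call ``two statistics propagated simultaneously'' is exactly this two-step scalar recursion; the paper does not need Cauchy--Schwarz, only the elementary bound $\min(p,q)^2\le pq$.

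There is one quantitative gap. Your density step conditions on everything except $\Phi_{u_i}(0)$ and uses only the node variance $\sigma_p^2$ to smear the location of $B_{u_i}$, giving $\pr(E_i\mid\cdot)\lesssim(\ell_i+\delta_{u_i})/\sigma_p$. After averaging over the edge, this produces coupling parameters of order $\sigma_e/\sigma_p$ and $1/\sigma_p$, not $\beta=\sigma_e/\sqrt{\sigma_e^2+\sigma_p^2}$ and $1/\sqrt{\sigma_e^2+\sigma_p^2}$. The paper gets the sharper denominator $\sqrt{\sigma_e^2+\sigma_p^2}$ because it also uses the randomness in the \emph{location} of the active interval (which depends on $\Phi_{vu_i}$): setting $\bar\Phi^1=\Phi^1_{u\leftarrow v}+\Phi_v(0)$, $\bar\Phi^2=\Phi^2_{u\leftarrow v}+\Phi_v(0)$ and $X=\bar\Phi^1+\bar\Phi^2$, one has $\mathrm{Var}(X)=4(\sigma_e^2+\sigma_p^2)$, and the non-coupling event can be written as $\{|Y|\ge |X-2b|-t\}$; an application of Anderson's inequality over $X$ then yields $a\le\beta$ and $bK_\Phi\le\beta$. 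Without this extra source of spread, your argument proves correlation decay under the stricter condition obtained by replacing $\beta$ with (a constant times) $\sigma_e/\sigma_p$, which is a weaker theorem than the one stated. The fix is not structural---just incorporate the edge randomness into the location estimate rather than only into the interval length $\ell_i$.
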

While our main result was stated for the case of independent costs, we have obtained a more general result which incorporates the case of correlated
edge costs. It is given as Proposition~\ref{prop:Gaussiancase} in Section~\ref{sec:coupling}.

\subsection{Maximum Weight Independent Sets}
Here, we consider a variation of the MWIS problem where the nodes of the graph are equipped with random weights $W_i, i\in V$, drawn independently from a common distribution $F(t)=\pr(W\le t), t\ge 0$. Let $I^*=I^*(\Graph)$ be the largest weighted independent set, when it is unique and let $W(I^*)$ be its weight. In our setting it is a random variable. Observe that $I^*$ is indeed almost surely unique when $F$ is a continuous distribution.

\begin{Theorem}\label{theorem:ISMainResult}
If $\Delta_\Graph\leq 3$ and the weights are exponentially distributed with parameter $1$, then there exists a multiplicative EPTAS for finding $J_\Graph$ with high probability. The algorithm runs in time  $O\Big(|V| 2^{O(\epsilon^{-2}\log(1/\epsilon))}\Big)$.
\end{Theorem}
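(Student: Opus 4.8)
The plan is to run the Cavity Expansion algorithm of Section~\ref{sec:Cav} on the MWIS decision network of Example~\ref{example:IS} and to certify near‑optimality through the correlation decay machinery of Section~\ref{sec:corrdecay}; the whole content of the theorem then rests on establishing correlation decay for this particular ensemble ($\Delta\le 3$, i.i.d.\ $\mathrm{Exp}(1)$ node weights). First I would record the exact cavity recursion for MWIS, which specializes to the scalar identity $B_{\Graph,v}=W_v-\sum_{i}\max(0,B_{\Graph_i,u_i})$, where $u_1,\dots,u_d$ are the neighbors of $v$ and $\Graph_i=\Graph\setminus\{v,u_1,\dots,u_{i-1}\}$; unrolled breadth‑first for $t$ levels this is precisely the depth‑$t$ computation tree, which for $\Delta\le 3$ has branching at most $2$ at every non‑root node and hence at most $\Delta^t=3^t$ vertices. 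By the results of Section~\ref{sec:corrdecay}, once we show that the leaf‑initialization error propagates to the root as a quantity $\delta_v(t)\to 0$, the decision $\hat x$ extracted from the estimates $\hat B_v$ obeys $J_\Graph-F(\hat x)\le\sum_v(\text{local error at }v)$, so it suffices to control $\E[\delta_v(t)]$ and then relate cavity error to lost weight.

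For the correlation decay estimate I would use a monotone sandwich. The map $m\mapsto\max(0,W_v-\sum_i m_i)$ is coordinate‑wise monotone, so the true cavity at the root lies between the two evaluations obtained by fixing \emph{all} depth‑$t$ leaves at $0$ and at a common truncation level $M$; writing $G_v^{(k)}$ for the gap between these two extremal evaluations at a node of height $k$, one gets $\delta_v(t)\le G_v^{(t)}$ together with the recursive bound $G_v^{(k)}\le\mathbf{1}\{W_v>A_v\}\sum_i G_{u_i}^{(k-1)}$, where $A_v=\sum_i m_{u_i}^{\mathrm{low}}\ge 0$ is the all‑leaves‑zero value of $\sum_i m_{u_i}$ and is independent of $W_v$. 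Taking expectations and using (i) that $W_v\sim\mathrm{Exp}(1)$ is independent of the subtrees, so $\pr(W_v>A_v\mid\text{subtrees})=e^{-A_v}$, and (ii) that the two subtrees hanging off $v$ are independent, reduces the analysis to a one‑dimensional recursion for $\E[G^{(k)}]$ driven by the auxiliary transforms $\E[e^{-m^{\mathrm{low}}}]$ and $\E[e^{-m^{\mathrm{low}}}G^{(k-1)}]$ of the coupled ``low'' message process. The place where $\Delta\le 3$ and $\mathrm{Exp}(1)$ are both used in an essential, non‑robust way is in checking that this recursion contracts: for $\Delta=3$ the branching factor $2$ exactly balances the shrinkage produced by the exponential density, which is why the argument is tight at degree three and why one should not expect a clean geometric rate. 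Carrying the recursion out, and choosing the truncation level $M=\Theta(\log(1/\epsilon))$ (valid on the high‑probability event that the weights in the relevant neighborhood do not exceed $M$), gives $\E[G_v^{(t)}]\le\epsilon^2$ once $t=\Theta(\epsilon^{-2}\log(1/\epsilon))$.

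The remaining steps are routine. A node $v$ is decided incorrectly by $\hat x$ only if $|B_{\Graph,v}|\le\delta_v(t)\le G_v^{(t)}$; since $B_{\Graph,v}=W_v-Z_v$ with $Z_v\ge 0$ a function of weights other than $W_v$ and $W_v\sim\mathrm{Exp}(1)$ has bounded density, $\pr(|B_{\Graph,v}|\le G_v^{(t)})=O(\E[G_v^{(t)}])$ and the expected weight lost at $v$ is $O(\E[G_v^{(t)}])$ up to lower‑order (polylogarithmic) factors, so $\E[J_\Graph-F(\hat x)]=O(\epsilon^2|V|)$. Since every maximal independent set has at least $|V|/(\Delta+1)=|V|/4$ vertices, $J_\Graph=\Omega(|V|)$ with high probability; Markov's inequality applied to $J_\Graph-F(\hat x)$ then yields $\pr(F(\hat x)/J_\Graph>1+O(\epsilon))=O(\epsilon)$, which after rescaling $\epsilon$ is the claimed multiplicative EPTAS w.h.p. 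The running time is $|V|$ times the size $3^t=2^{O(\epsilon^{-2}\log(1/\epsilon))}$ of each computation tree, matching the stated bound.

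The main obstacle is the second step: getting the \emph{quantitative} correlation decay estimate right at the borderline degree $\Delta=3$. Unlike the coupling arguments behind Theorems~\ref{thm:Uniform}--\ref{thm:Gauss}, here one cannot simply exhibit a contraction constant bounded away from $1$; the analysis has to track the joint law of the ``low'' and ``high'' message processes through the recursion and exploit the exact shape of the exponential density, and it is exactly this criticality that forces $t=\Theta(\epsilon^{-2}\log(1/\epsilon))$ rather than $t=O(\log(1/\epsilon))$, i.e.\ EPTAS rather than FPTAS. Two subsidiary difficulties are controlling the unbounded exponential weights (the choice of $M$ and the event on which the gap bound is valid) and handling the dependence between $G_v^{(t)}$ and $B_{\Graph,v}$ in the weight‑loss estimate — both manageable via conditioning and the explicit monotone coupling. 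Finally, the randomization in the MWIS algorithm enters in converting the (possibly mutually inconsistent) sign pattern of the estimates $\hat B_v$ into an actual independent set without losing more than $O(\epsilon)$ fraction of the weight.
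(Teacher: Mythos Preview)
Your high–level outline is right in spirit (cavity recursion for MWIS, a monotone sandwich between the two extremal initializations, criticality at $\Delta=3$ with $\mathrm{Exp}(1)$ weights, and the resulting depth $t=\Theta(\epsilon^{-2}\log(1/\epsilon))$), but the proposal is missing the one idea that actually makes the proof go through, and it misidentifies the role of randomization.

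The genuine gap is in how you break criticality. You correctly observe that the branching factor $2$ exactly balances the shrinkage produced by the exponential density; in the paper's language, the Laplace–transform recursion $\E[e^{-C_\gH(i)}]=1-\tfrac12\E[e^{-\sum_l C_{\gH\setminus\cdots}(i_l)}]$ yields a contraction factor $d/2\le 1$ at non-root nodes, which is \emph{not} strictly below $1$. Your proposed fix — a weight truncation level $M=\Theta(\log(1/\epsilon))$ — does nothing to this contraction factor; truncating the weights neither reduces the branching nor improves the per-step shrinkage, so the gap recursion does not decay and your claim $\E[G_v^{(t)}]\le\epsilon^2$ is unsupported. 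The paper's device is different and essential: before running Cavity Expansion, delete every node independently with probability $\epsilon^2/16$ and work in the thinned graph $\Graph(\epsilon)$. Then the expected number of surviving neighbors at each non-root step is at most $2(1-\epsilon^2/16)$, giving a genuine contraction $(1-\epsilon^2/16)$ per level, hence $e_r\le(1-\epsilon^2/16)^r$ and the depth $r=\Theta(\epsilon^{-2}\log(1/\epsilon))$ you want. This node deletion is also why the scheme is EPTAS rather than FPTAS.

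Relatedly, your last paragraph has the role of randomization backwards. There is no ``mutually inconsistent sign pattern'' to repair: by the monotone sandwich (your own Lemma) one has $C^-_{\Graph(\epsilon)}(i,2r)\le C_{\Graph(\epsilon)}(i)$, so the output $\{i:C^-_{\Graph(\epsilon)}(i,2r)>0\}$ is automatically a subset of the true maximum weight independent set of $\Graph(\epsilon)$ and hence independent. Randomization is not used for feasibility; it is used solely to create the contraction. Two smaller technical points: the two ``subtrees'' in the computation tree are \emph{not} independent when the underlying graph has cycles, so your step (ii) is false as stated; the paper circumvents this by working with $M_\gH(i)=\E[e^{-C_\gH(i)}]$ and telescoping the difference $M^--M^+$ so that each summand is bounded using only $0\le e^{-C}\le 1$, with no independence needed. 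Your direct gap recursion $\E[e^{-A_v}\sum_i G_{u_i}^{(k-1)}]$ has $A_v$ correlated with each $G_{u_i}^{(k-1)}$ and the $G_{u_i}^{(k-1)}$ correlated with one another, and you have not said how to close it.
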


An interesting implication of Theorem~\ref{theorem:ISMainResult} is that while the Maximum (cardinality) Independent Set problem admits neither a polynomial time algorithm nor a PTAS (unless P=NP), even when the degree is bounded to $3$~\cite{BermanKarpinski,trevisan2001nar}, the problem of finding the maximum weight independent set becomes tractable for certain distributions $F$, in the PTAS sense.

The exponential distribution is not the only distribution which can be analyzed in this framework, it is just the easiest to work with. For any phase-type distribution, we can characterize correlation decay and identify sufficient conditions for correlation decay to hold. It is natural to ask if the above result can be generalized, and in particular to wonder if it is possible to find for each $\Delta$ a distribution which guarantees that the correlation decay property holds for graphs with degree bounded by $\Delta$. It is indeed possible, as we extend Theorem~\ref{theorem:ISMainResult}, albeit to the case of mixtures of exponential distributions. Let $\rho>25$ be an arbitrary constant and let $\alpha_j=\rho^j, j\ge 1$.

\begin{Theorem}\label{theorem:ISMainResult2}
Assume $\Delta_\Graph\leq \Delta$, and that the weights are distributed according to
$P(W>t)=\frac{1}{\Delta}\sum_{1\le j \le \Delta} \exp(-\alpha_j t)$.
Then there exists a FPTAS for finding $J_\Graph$ with high probability. The algorithm runs in time  $O\Big(n (\frac{1}{\epsilon})^{\Delta}\Big)$.
\end{Theorem}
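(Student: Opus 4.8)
The plan is to follow the same architecture as the proof of Theorem~\ref{theorem:ISMainResult}: reduce the approximation guarantee to establishing correlation decay for the cavity recursion of the MWIS problem, and then verify that the mixture-of-exponentials weight distribution $P(W>t)=\frac{1}{\Delta}\sum_{1\le j\le\Delta}\exp(-\alpha_j t)$, with $\alpha_j=\rho^j$ and $\rho>25$, actually satisfies the correlation decay criterion when the degree is bounded by $\Delta$. Concretely, recall from Section~\ref{sec:corrdecay} that running the Cavity Expansion algorithm to depth $t$ on each node produces $\hat B_v$ with $|\hat B_v - B_v|$ controlled by the rate at which perturbations of the boundary cavities decay along the computation tree; if that rate is geometric with ratio $<1$ (uniformly over the randomness), then taking $t=O(\log(1/\epsilon))$ yields an $\epsilon$-accurate estimate of every cavity, hence of $J_\Graph$, and the running time is $(\Delta-1)^{O(t)} n = n(1/\epsilon)^{O(1)}$, matching the claimed $O(n(1/\epsilon)^{\Delta})$ bound after optimizing constants.

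The heart of the argument is the monotonicity-based correlation decay analysis promised in Section~\ref{section:MWIS}. For MWIS the cavity recursion takes the scalar form $B_v = \big(W_v - \sum_{u\in\mathcal N(v)} B_u^+\big)$ (with $B_u$ computed in the subnetwork with $v$ removed, and $x^+=\max(x,0)$), so the map from children-cavities to $B_v$ is monotone and $1$-Lipschitz in each coordinate after the truncation. The standard trick is to pass to the ``effective'' weight $Y_v = W_v - \sum_{u} B_u^+$ and track the distribution of $B_v^+$, or equivalently of the conditional probability that $v$ is in the optimal set given the subtree; one sets up a one-step contraction for an appropriate potential (or an $L_1$/total-variation distance between the law of $B_v$ under two boundary conditions). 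Because $W$ is a uniform mixture of exponentials with rates $\rho,\rho^2,\ldots,\rho^\Delta$, the tail $P(W>t)$ is dominated on the relevant scale by the slowest component $\frac1\Delta e^{-\rho t}$, and the memoryless-like structure of each exponential piece lets one bound the per-edge ``discounting'' of a cavity perturbation by something like $\frac{1}{\rho}$ (up to the mixture bookkeeping). The condition $\rho>25$ is then exactly what is needed so that $(\Delta-1)$ edges, each contributing a factor that one must show is $O(1/\sqrt\rho)$ or $O(1/\rho)$ after accounting for the probability that a child is ``active'' (i.e. $B_u>0$), multiply to strictly less than $1$ — mirroring the role of $25=5^2$ in the $\beta(\Delta-1)^2<1$ style conditions of Theorems~\ref{thm:Uniform}.

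The main obstacle is making the per-edge contraction factor uniform over $\Delta$ and over the realized weights: a single exponential rate $\rho$ does not by itself beat $(\Delta-1)$ in the recursion, which is why the distribution is chosen as a mixture whose rates grow geometrically. One must show that, conditioned on the event that perturbing the boundary actually changes $B_v$, the relevant child is overwhelmingly likely to have been drawn from a fast component (large $\alpha_j$), so its cavity is tiny and the perturbation is damped by a factor shrinking in $\rho$; summing the $\Delta$ mixture cases and the $(\Delta-1)$ children then still gives a product $<1$ because the geometric spacing $\alpha_j=\rho^j$ makes the ``bad'' (slow-component) contributions a convergent series controlled by $\rho^{-1}$. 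I would carry this out by: (i) writing the exact MWIS cavity recursion and the computation tree; (ii) defining the coupling/distance between two boundary-perturbed runs; (iii) proving a one-step inequality $\mathbb E[\text{distance at } v] \le \lambda \sum_{u\in\mathcal N(v)} \mathbb E[\text{distance at }u]$ with $\lambda$ an explicit function of $\rho$ and the mixture weights; (iv) checking $\lambda(\Delta-1)<1$ for $\rho>25$ using the geometric rate gap; (v) unrolling to depth $t=\Theta(\log(1/\epsilon))$, converting the cavity error into a reward error via the argument of Section~\ref{sec:corrdecay}, and bounding the expected number of computation-tree nodes to get the $O(n(1/\epsilon)^\Delta)$ runtime; (vi) invoking a concentration/union-bound step over the $n$ nodes to upgrade ``in expectation'' to ``with high probability,'' exactly as in the proof of Theorem~\ref{theorem:ISMainResult}.
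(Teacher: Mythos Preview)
Your plan has the right high-level architecture (cavity recursion, monotone bounds $C^-\le C\le C^+$, concentration as in Theorem~\ref{theorem:ISMainResult}), but the correlation-decay step is where it goes wrong. You are looking for a \emph{scalar} one-step contraction of the form $\E[\text{error at }v]\le \lambda\sum_u\E[\text{error at }u]$ with $\lambda(\Delta-1)<1$, and you try to manufacture a small $\lambda$ from the fast exponential components. That is not how the mixture is used, and a scalar argument of this type will not close: for MWIS the natural one-step bound picks up a full factor of the degree, and no single Laplace rate makes the per-child factor smaller than $1/\Delta$ uniformly.

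The actual mechanism in the paper is a \emph{vector}-valued error and a \emph{matrix} contraction. One tracks, for each of the $\Delta$ rates, the Laplace-type moments $M_{\gH}^j(i)=\E[\exp(-\alpha_j C_{\gH}(i))]$ (and their $\pm$ versions). The mixture representation gives an exact recursion
\[
M_{\gH}^{-,j}(i,r)-M_{\gH}^{+,j}(i,r)\ \le\ \frac{d}{\Delta}\sum_{k=1}^{\Delta}\frac{\alpha_j}{\alpha_j+\alpha_k}\,\max_l\big|M^{-,k}-M^{+,k}\big|,
\]
so the error vector $e_{r}=(e_{r,1},\ldots,e_{r,\Delta})$ satisfies $e_r\le M_\Delta\,e_{r-1}$ with $(M_\Delta)_{j,k}=\alpha_j/(\alpha_j+\alpha_k)$ (after using $d\le\Delta$). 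Two things are doing the work here, and your proposal misses both: first, the mixture weight $1/\Delta$ exactly cancels the degree sum, so there is \emph{no} residual $(\Delta-1)$ factor to beat; second, the contraction of $M_\Delta$ is proved not by bounding row sums (they are large: $(M_\Delta)_{j,k}\approx 1$ for $j>k$) but by exhibiting a positive test vector $x_k=\rho^{-k/2}$ for $M_\Delta'$ and checking $M_\Delta' x\le \big(\tfrac12+2\sqrt{1/\rho}/(1-\sqrt{1/\rho})\big)x$, which is $<x$ precisely when $\rho>25$. This Perron--Frobenius style step is the reason for the constant $25$; it has nothing to do with a bound of the shape $\lambda(\Delta-1)<1$. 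Once the matrix contraction is in place, no node-deletion phase is needed (hence FPTAS rather than EPTAS), and the rest of the concentration argument from Theorem~\ref{theorem:ISMainResult} goes through unchanged.
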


Note that for the mixture of exponential distributions described above our algorithm is in fact an FPTAS as opposed to an EPTAS for Theorem~\ref{theorem:ISMainResult}. This is essentially due to the fact that the conditions of Theorem~\ref{theorem:ISMainResult} are at the `boundary' of correlation decay; more technical details are given in \ref{section:MWIS}.

Our final result is a partial converse to the results above; one could conjecture that randomizing the weights makes the problem essentially easy to solve, and that perhaps being able to solve the randomized version does not tell us much about the deterministic version. We show that this is not the case, and that the setting with random weights hits a complexity-theoretic barrier just as the classical cardinality problem does. Specifically, we show that for graphs with sufficiently large degree the problem of finding the largest weighted independent set with i.i.d. exponentially distributed weights does not admit a PTAS.  We need to keep in mind that since we dealing with instances which are random (in terms of weights) and worst-case (in terms of the underlying graph) at the same time, we need to be careful as to the notion of hardness we use.

Specifically, for any $\rho<1$, define an algorithm $\mathcal{A}$ to be a factor-$\rho$ polynomial time approximation algorithm for computing $\E[W(I^*)]$ for graphs with degree at most $\Delta$, if given any graph with degree at most $\Delta$, $\mathcal{A}$ produces a value $\hat w$ such that $\rho \le \hat w/E[W(I^*)]\le 1/\rho$ in time bounded by $O(n^{O(1)})$. Here the expectation is with respect to the exponential weight distribution and the constant exponent $O(1)$ is allowed to depend on $\Delta$.

En route of  Theorems \ref{theorem:ISMainResult} and \ref{theorem:ISMainResult2} we establish similar results for expectations:  there exists an EPTAS and FPTAS respectively for computing the deterministic quantity $E[W(I^{*})]$, the expected weight of the MWIS in the graph $\Graph$ considered.

However, our next result shows that if the maximum degree of the graph is increased, it is impossible to approximate the quantity $E[W(I^{*})]$ arbitrarily closely, unless P=NP. Specifically,

\begin{Theorem}\label{theorem:ISMainResult3}
There exist $\Delta_0$ and $c_1^*,c_2^*$ such that for all $\Delta\ge \Delta_0$ the problem of computing  $\E[W(I^*)]$ to within a multiplicative factor $\rho=\Delta/(c_1^*\,(\log\Delta)\, 2^{c_2^*\sqrt{\log\Delta}})$  for graphs with degree at most $\Delta$ cannot be solved in polynomial time, unless
P=NP.
\end{Theorem}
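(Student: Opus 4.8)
The plan is to reduce from the hardness of approximating the maximum independent set problem (cardinality version) on bounded-degree graphs, using the known inapproximability results of the form cited in the paper (e.g.\ Trevisan~\cite{trevisan2001nar}): for some absolute constant $c$ and all large $\Delta$, the cardinality MIS on graphs of degree at most $\Delta$ cannot be approximated within a factor $\Delta/2^{c\sqrt{\log\Delta}}$ unless P=NP. The key observation that links the cardinality problem to the randomized expectation $\E[W(I^*)]$ is a sandwiching estimate: if $\alpha(\Graph)$ is the independence number and the weights are i.i.d.\ $\mathrm{Exp}(1)$, then $\E[W(I^*)]$ is trapped between a lower bound coming from any fixed maximum independent set (namely $\alpha(\Graph)\cdot\E[\text{order statistic sum}]$, which is $\Theta(\alpha(\Graph))$) and an upper bound coming from the union over all maximal independent sets. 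The intuitive reason a $\log\Delta$ loss appears is that the weighted optimum is free to discard low-weight vertices and pick up high-weight vertices outside a maximum-cardinality set, and the typical gain from the heaviest $\Theta(n/\Delta)$ weights among $n$ i.i.d.\ exponentials is of order $\log(\Delta)$ per selected vertex.

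First I would prove the upper bound $\E[W(I^*)] = O\big(\alpha(\Graph)\log\Delta + (n/\Delta)\log(n)\big)$ type estimate, more precisely showing $\E[W(I^*)] \le C\,\alpha(\Graph)\log\Delta$ for graphs on $n$ vertices of degree at most $\Delta$ with $\alpha(\Graph)\ge n/(\Delta+1)$ (the latter always holds by greedy). The argument: $W(I^*) \le \max_{I \text{ ind.}} \sum_{v\in I} W_v \le \sum_{v : W_v > \tau} W_v$ for any threshold $\tau$, and also $W(I^*)\le \alpha(\Graph)\cdot\max_v W_v$; balancing a threshold $\tau \approx \log\Delta$ against the fact that only $\approx n e^{-\tau} = n/\Delta$ vertices exceed it, together with the crude bound $\alpha\ge n/(\Delta+1)$, yields the $O(\log\Delta)$ multiplicative overshoot relative to $\alpha(\Graph)$. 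For the lower bound, I would fix any maximum independent set $I_0$ with $|I_0|=\alpha(\Graph)$ and note $\E[W(I^*)] \ge \E[\sum_{v\in I_0} W_v] = \alpha(\Graph)$. Hence
\begin{align*}
\alpha(\Graph) \ \le\ \E[W(I^*)] \ \le\ C\,(\log\Delta)\,\alpha(\Graph).
\end{align*}

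Given this sandwich, a factor-$\rho$ polynomial-time approximation $\hat w$ to $\E[W(I^*)]$ with $\rho = \Delta/(c_1^*(\log\Delta)2^{c_2^*\sqrt{\log\Delta}})$ would yield, via the two inequalities, an estimate of $\alpha(\Graph)$ accurate to within a multiplicative factor $\rho' = \rho/( C\log\Delta) = \Delta/(c_1^* C\,(\log\Delta)^2\,2^{c_2^*\sqrt{\log\Delta}})$, which for an appropriate choice of the constants $c_1^*, c_2^*$ (absorbing the $(\log\Delta)^2$ into $2^{O(\sqrt{\log\Delta})}$, since $(\log\Delta)^2 = 2^{2\log\log\Delta} \le 2^{c\sqrt{\log\Delta}}$ for large $\Delta$) beats the Trevisan-type threshold $\Delta/2^{c\sqrt{\log\Delta}}$ for cardinality MIS, giving P=NP. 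I would set $\Delta_0$ large enough that all the ``for large $\Delta$'' conditions in the quoted hardness result and in the arithmetic on $\log$-factors hold simultaneously.

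The main obstacle is the upper bound: one must show $\E[W(I^*)]$ does not exceed $\alpha(\Graph)$ by more than a $\Theta(\log\Delta)$ factor \emph{for every} graph of degree $\le\Delta$, not just on average over nice graphs, and in particular handle graphs where $\alpha(\Graph)$ is as small as $n/(\Delta+1)$ — precisely the regime where the optimizer has the most freedom to exploit weight fluctuations. The delicate point is bounding $\E[\sum_{v:W_v>\tau}W_v]$ and, more importantly, controlling the event that an \emph{independent} set of unusually heavy vertices exists; a clean way is to avoid the independence structure entirely on the upper side (as sketched, $W(I^*)\le\min\{\sum_{v:W_v>\tau}W_v,\ \alpha(\Graph)\max_vW_v\}$ needs no independence of the chosen set) and to show both competing bounds concentrate, so that the deterministic quantity $\E[W(I^*)]$ inherits the $O(\alpha(\Graph)\log\Delta)$ ceiling. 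A secondary technical point is propagating the additive/multiplicative error cleanly through the sandwich so that a too-good approximation of $\E[W(I^*)]$ really does certify the independence number within the forbidden factor; this is bookkeeping but must be done carefully to land exactly at the stated $\rho$.
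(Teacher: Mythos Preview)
Your overall strategy is exactly the paper's: sandwich $\E[W(I^*)]/|I^M|$ between $1$ and $O(\log\Delta)$ and then invoke Trevisan's inapproximability for the cardinality problem. The lower bound $\E[W(I^*)]\ge |I^M|$ is identical.

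For the upper bound, however, the inequality you write, $W(I^*)\le \sum_{v:W_v>\tau}W_v$, is simply false (take all weights below $\tau$). What you presumably mean is the decomposition
\[
W(I^*)\ \le\ |I^*|\,\tau+\sum_{v:W_v>\tau}W_v\ \le\ |I^M|\,\tau+\sum_{v:W_v>\tau}W_v,
\]
after which $\tau=\log\Delta$ and $\E\big[\sum_{v:W_v>\tau}W_v\big]=n(1+\tau)e^{-\tau}$ combined with $|I^M|\ge n/(\Delta+1)$ give the claimed $O(|I^M|\log\Delta)$; the second bound $W(I^*)\le |I^M|\max_v W_v$ is not needed and would only yield a $\log n$ factor. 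The paper bypasses the threshold altogether with the cleaner observation $W(I^*)\le \sum_{j=n-|I^M|+1}^{n}W_{(j)}$ (the top $|I^M|$ order statistics) and then uses the exact formula $\E[W_{(j)}]=H(n)-H(n-j)$ for exponentials to get $\E[W(I^*)]\le (\log(\Delta+1)+4)|I^M|$. Once the upper bound is fixed, the remainder of your reduction is correct and matches the paper.
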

We could compute a concrete $\Delta_0$ such that for all $\Delta\ge \Delta_0$ the claim of the theorem holds, though such $\Delta_0$ explicitly does not seem to offer much insight. We note that in the related work by Trevisan~\cite{trevisan2001nar}, no attempt is made to compute a similar bound either.

\section{The cavity recursion}\label{sec:Cav}

In this section, we introduce the \emph{cavity recursion}, an exact recursion for computing the cavity functions of each node in a general decision network. We first start by giving the cavity recursion for trees (which is already known as the max-product belief propagation algorithm), and then give a generalization for all networks.

\subsection{Trees}

Given a decision network $\Graph=(V,E,\Phi,\chi)$ suppose that $(V,E)$ is a rooted tree with a root $u$. Using the graph orientation induced by the choice of $u$ as a root, let $\Graph_v$ be the subtree rooted in node $v$ for any node $v\in V$. In particular, $\Graph=\Graph_u$. Denote by $C(u)$ the set of children of $u$ in $(V,E)$. Given a node $u\in V$, a child $v\in C(u)$, and an arbitrary vector $B=(B(x), x\in\chi)$, define

\begin{align}\label{MuDefine}
    \mu_{u \leftarrow v}(x,B)= \max_{y} (\Phi_{u,v}(x,y)+B(y))-\max_{y} (\Phi_{u,v}(0,y)+B(y))
    \end{align}
for every action $x\in \chi$.
    $\mu$ is called partial cavity function.

\begin{Proposition} \label{CavTree}
For every $u\in V$ and $x\in \chi$,
\begin{equation}\label{eq:CavEqTree}
B _u(x)=\Phi_u(x)-\Phi_u(0)+ \sum_{v\in C(u)} \mu_{u \leftarrow v}(x,B _{\Graph_{v},v})
\end{equation}
\end{Proposition}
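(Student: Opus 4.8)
The plan is to prove the cavity recursion for trees by exploiting the fact that removing the root $u$ disconnects the tree into the subtrees $\Graph_v$ rooted at the children $v \in C(u)$. Fix $u \in V$ and $x \in \chi$. The quantity $J_{\Graph,u}(x)$ is the maximum of $F_\Graph(\mathbf{x})$ over all solutions with $x_u = x$. Since $(V,E)$ is a tree rooted at $u$, the edge set $E$ is the disjoint union of the edges $(u,v)$ for $v \in C(u)$ together with the edge sets of the subtrees $\Graph_v$, and similarly the vertex set $V \setminus \{u\}$ is the disjoint union of the $V(\Graph_v)$. Hence for a fixed assignment $x_u = x$, the objective decomposes as
\begin{equation*}
F_\Graph(\mathbf{x}) = \Phi_u(x) + \sum_{v \in C(u)} \Big( \Phi_{u,v}(x, x_v) + F_{\Graph_v}(\mathbf{x}|_{\Graph_v}) \Big),
\end{equation*}
where $\mathbf{x}|_{\Graph_v}$ denotes the restriction of $\mathbf{x}$ to the nodes of $\Graph_v$. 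Because the subtrees share no vertices other than constraints propagated through $u$, and $u$'s action is fixed to $x$, the maximization over the remaining variables factorizes across the children: maximizing over each block independently gives
\begin{equation*}
J_{\Graph,u}(x) = \Phi_u(x) + \sum_{v \in C(u)} \max_{y \in \chi} \Big( \Phi_{u,v}(x,y) + J_{\Graph_v, v}(y) \Big).
\end{equation*}

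Next I would subtract the corresponding identity for $x = 0$. By definition $B_u(x) = J_{\Graph,u}(x) - J_{\Graph,u}(0)$, so
\begin{equation*}
B_u(x) = \Phi_u(x) - \Phi_u(0) + \sum_{v \in C(u)} \Big( \max_{y} (\Phi_{u,v}(x,y) + J_{\Graph_v,v}(y)) - \max_{y} (\Phi_{u,v}(0,y) + J_{\Graph_v,v}(y)) \Big).
\end{equation*}
The final step is to observe that the bracketed term is invariant under adding a constant (in $y$) to $J_{\Graph_v,v}(\cdot)$, since such a constant shifts both maxima equally. Writing $J_{\Graph_v,v}(y) = B_{\Graph_v,v}(y) + J_{\Graph_v,v}(0)$ and cancelling the constant $J_{\Graph_v,v}(0)$ inside each difference, the bracket becomes exactly $\mu_{u \leftarrow v}(x, B_{\Graph_v, v})$ as defined in \eqref{MuDefine}, which yields \eqref{eq:CavEqTree}.

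The only real content is the decomposition step: justifying that, with $x_u$ fixed, the joint maximization over the descendants splits into independent per-subtree maximizations. This is the tree analogue of the standard dynamic-programming / max-product argument, and the only thing to be careful about is bookkeeping — that each edge and each non-root vertex is counted exactly once and in exactly one subtree, and that no constraint couples two different subtrees once $x_u$ is pinned (which holds precisely because $u$ is a cut vertex separating them). A clean way to present it is by induction on the depth of the tree, with the leaf case $B_u(x) = \Phi_u(x) - \Phi_u(0)$ (empty sum) as the base; the inductive step is then just the decomposition above applied once. Everything else is the constant-shift invariance of $\mu$, which is immediate from its definition.
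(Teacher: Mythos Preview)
Your proof is correct and follows essentially the same approach as the paper: decompose $J_{\Graph,u}(x)$ over the disconnected subtrees $\Graph_v$, subtract the $x=0$ case, and shift each $J_{\Graph_v,v}(\cdot)$ by the constant $J_{\Graph_v,v}(0)$ to recognize $\mu_{u\leftarrow v}(x,B_{\Graph_v,v})$. The only cosmetic difference is that the paper writes out the joint maximization over $(x_1,\ldots,x_d)$ explicitly before splitting it, whereas you state the factorization directly.
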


\begin{proof}
Suppose $C(u)=\{v_1,\ldots,v_d\}$. Observe that the subtrees $\Graph_{v_i}, 1\le i\le d$ are disconnected (see figure \ref{fig:tree})
Thus,
\begin{align*}
B_u (x)&= \Phi_u(x)+ \max_{ x_{1},\ldots,x_{d}} \Big \{\sum_{j=1}^{d} \Phi_{u,v_j}(x,x_{j})+J_{\Graph_{v_j},v_j}(x_{j}) \Big \} \\
       &- \Phi_u(0)- \max_{x_{1},\ldots,x_{d}} \Big \{ \sum_{j=1}^{d} \Phi_{u,v_j}(0,x_{j})+J_{\Graph_{v_j},v_j}(x_{j})) \Big \}\\
&=\Phi_u(x)-\Phi_u(0)\\
        &+  \sum_{j=1}^d  \Big\{ \max_{y} \big(\Phi_{u,v_j}(x,y)+J_{\Graph_{v_j},v_j}(y)\big)
        -\max_{y}  \big(\Phi_{u,v_j}(0,y)+J_{\Graph_{v_j},v_j}(y)\big) \Big \}
\end{align*}
For every $j$
\begin{align*}
&\max_{y} \big(\Phi_{u,v_j}(x,y)+J_{\Graph_{v_j},v_j}(y)\big)
 -  \max_{y} \big(\Phi_{u,v_j}(0,y)+J_{\Graph_{v_j},v_j}(y)\big) =\\
&\max_{y} \big(\Phi_{u,v_j}(x,y)+J_{\Graph_{v_j},v_j}(y)-J_{\Graph_{v_j},j}(0)\big)
 -  \max_{y} \big(\Phi_{u,v_j}(0,y)+J_{\Graph_{v_j},v_j}(y)-J_{\Graph_{v_j},v_j}(0)\big)
\end{align*}
The quantity above is exactly $\mu_{u \leftarrow v_j}(x,B_{v_j,\Graph_{v_j}})$.
\end{proof}

\begin{figure}
    \psfrag{l}[c][c][1]{$\mathcal{G}_u$}
    \psfrag{a}[c][c][1]{$u$}
    \psfrag{b}[c][c][1]{$v_1$}
    \psfrag{c}[c][c][1]{$v_2$}
    \psfrag{d}[c][c][1]{$v_3$}
    \psfrag{e}[c][c][1]{$w_1$}
    \psfrag{f}[c][c][1]{$w_2$}
    \psfrag{g}[c][c][1]{$y_1$}
    \psfrag{h}[c][c][1]{$y_2$}
    \psfrag{i}[c][c][1]{$z_1$}
    \psfrag{j}[c][c][1]{$z_2$}
	 \psfrag{k}[c][c][1]{$\mathcal{G}_{v_1}$}
	 \psfrag{p}[c][c][1]{$B_{\Graph,u}(x)$}
	 \psfrag{q}[c][c][1]{$B_{\Graph_{v_1},v_1}$}
	 \psfrag{m}[c][c][1]{$\mu_{u\leftarrow v_1}(x,B_{\Graph_{v_1},v_1})$}
	 \center
    \includegraphics[scale=0.7]{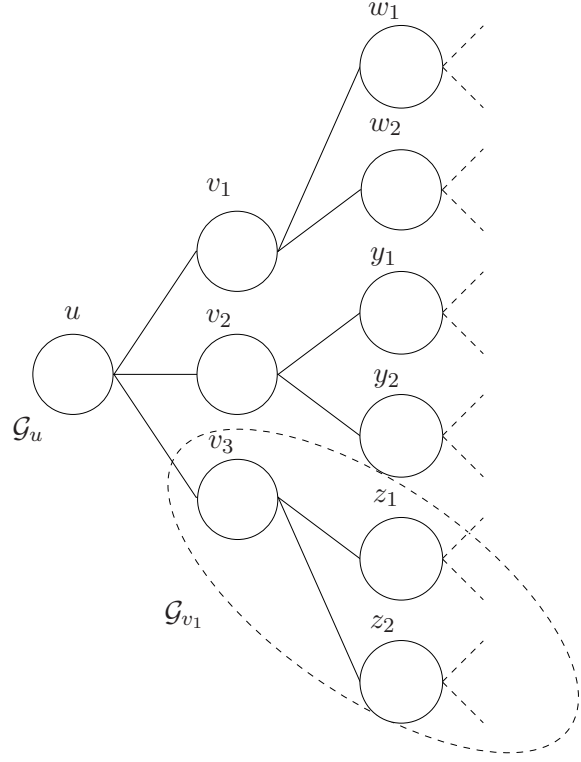}
    \label{fig:tree}
    \caption{Cavity recursion for trees, equivalent to the BP algorithm}
\end{figure}

Iteration \eqref{eq:CavEqTree} constitutes what is known as (max-product) belief propagation. Proposition \ref{CavTree} is the restatement of the well-known fact that BP finds an optimal solution on a tree (citation). BP can be implemented in non-tree like graphs, but then it is not guaranteed to converge, and even when it does it may produce wrong (suboptimal) solutions. In the following section we construct a generalization of BP which is guaranteed to converge to an optimal decision.

\subsection{General graphs}\label{subsection:generalgraphs}

The goal of this subsection is to construct a generalization of identity (\ref{eq:CavEqTree}) for an arbitrary network $\Graph$. This can be achieved by building a sequence of certain auxiliary decision networks $\Graph(u,j,x)$ constructed as follows.

Given  a decision network $\Graph=(V,E,\Phi,\chi)$ where the underlying graph is arbitrary, fix any node $u$ and action $x$ and let $\mathcal{N}(u)=\{v_1,\ldots,v_d\}$. For every $j=1,\ldots,d$ let $\Graph(u,j,x)$ be the decision network $(V',E',\Phi',\chi)$ on the same decision set $\chi$ constructed as follows.
$(V',E')$ is the subgraph induced by $V'= V\setminus \{u\}$. Namely, $E'=E\setminus \{(u,v_1),\ldots,(u,v_d)\}$. Also $\Phi_e'=\Phi_e$ for all $e$ in $E'$ and the potential functions $\Phi_v'$ are defined as follows. For any $v \in V \backslash
\{u,v_1,\ldots,v_{j-1},v_{j+1},\ldots,v_d\}$, $\Phi'_v=\Phi_v$, and
\begin{align}
\Phi'_{v}(y)&=\Phi_{v}(y)+\Phi_{u,v}(x,y)\hspace{20pt}\text{for $v \in \{v_1,\ldots, v_{j-1}\}$}\notag\\
\Phi'_{v}(y)&=\Phi_{v}(y)+\Phi_{u,v}(0,y)\hspace{20pt}\text{for $v \in \{v_{j+1},\ldots,v_d\}$} \label{eq:GraphModified}
\end{align}

\begin{figure}
\psfrag{vc}[c][c][1]{$u$}
\psfrag{va}[c][c][1]{$v_1$}
\psfrag{vb}[c][c][1]{$v_2$}
\psfrag{vd}[c][c][1]{$v_3$}
\psfrag{ca}[l][l][1]{$\Phi_{u,v_1}$}
\begin{center}
\begin{align*}
B_u
    \left(
    \begin{array}{clr}
    \includegraphics[scale=0.21]{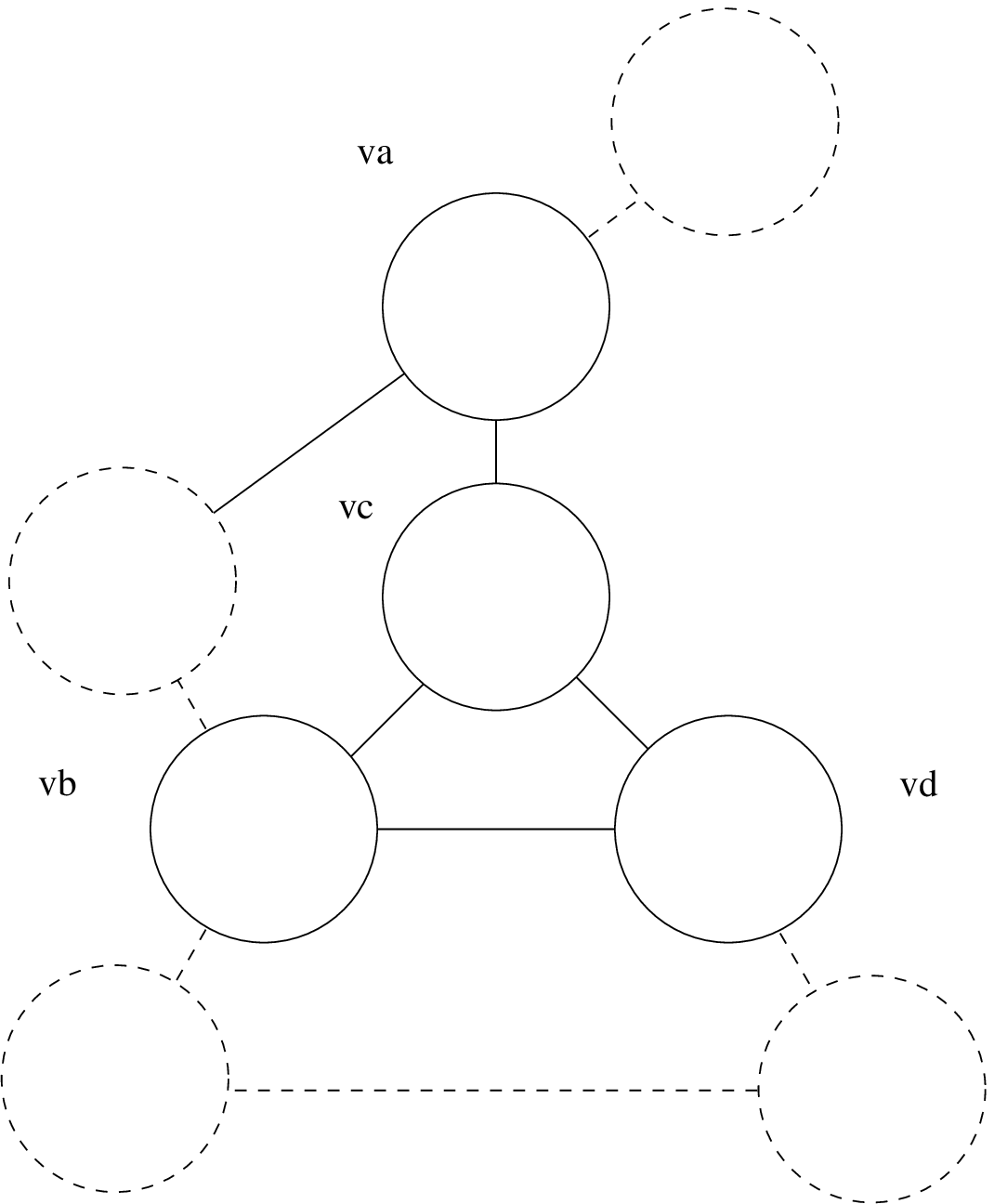}
    \end{array}
    \right)= \Phi_u(x)-\Phi_u(0) &+
    J
    \left(
    \begin{array}{clr}
    \includegraphics[scale=0.21]{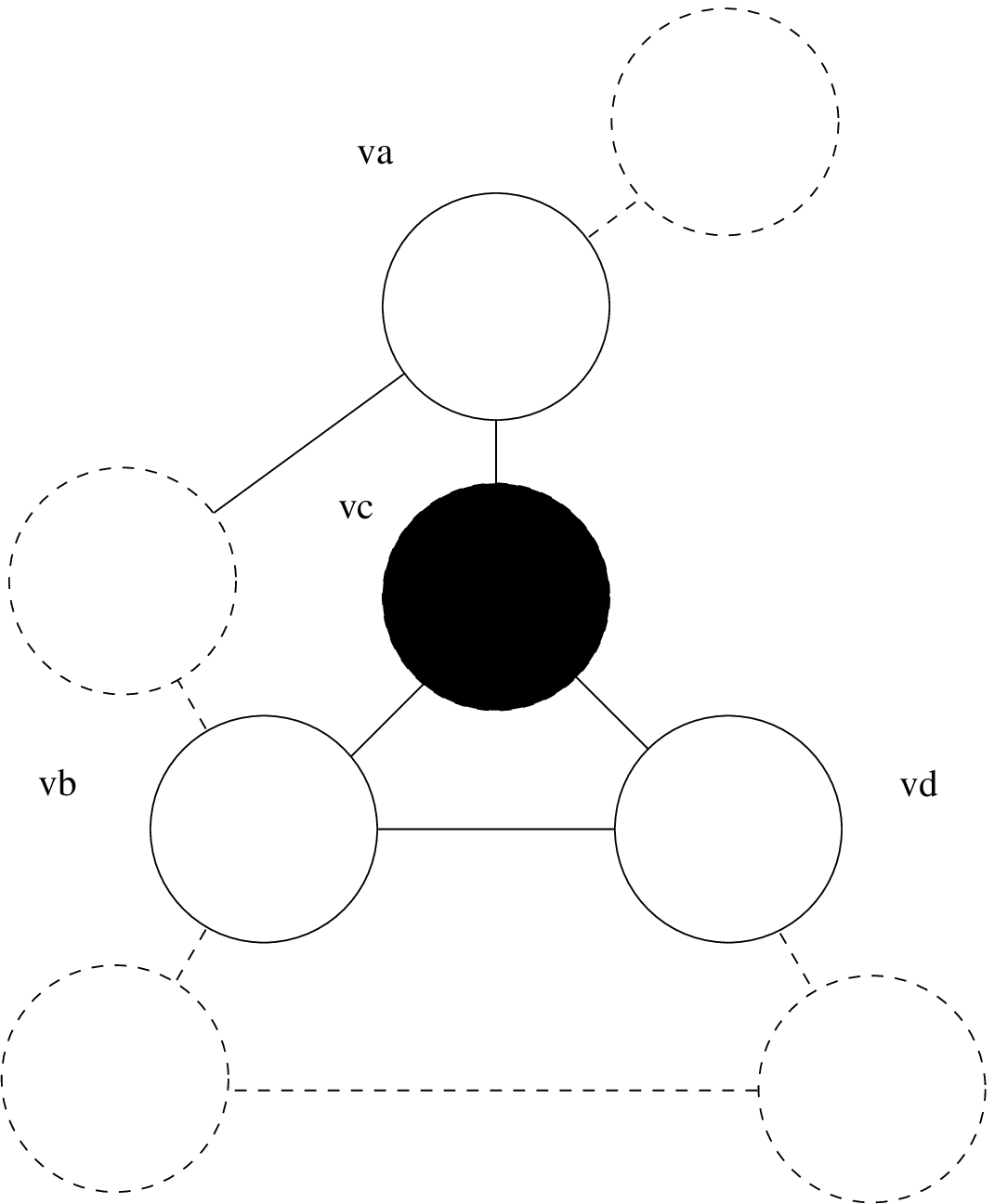}
    \end{array}
    \right)-
    J
    \left(
    \begin{array}{clr}
    \includegraphics[scale=0.21]{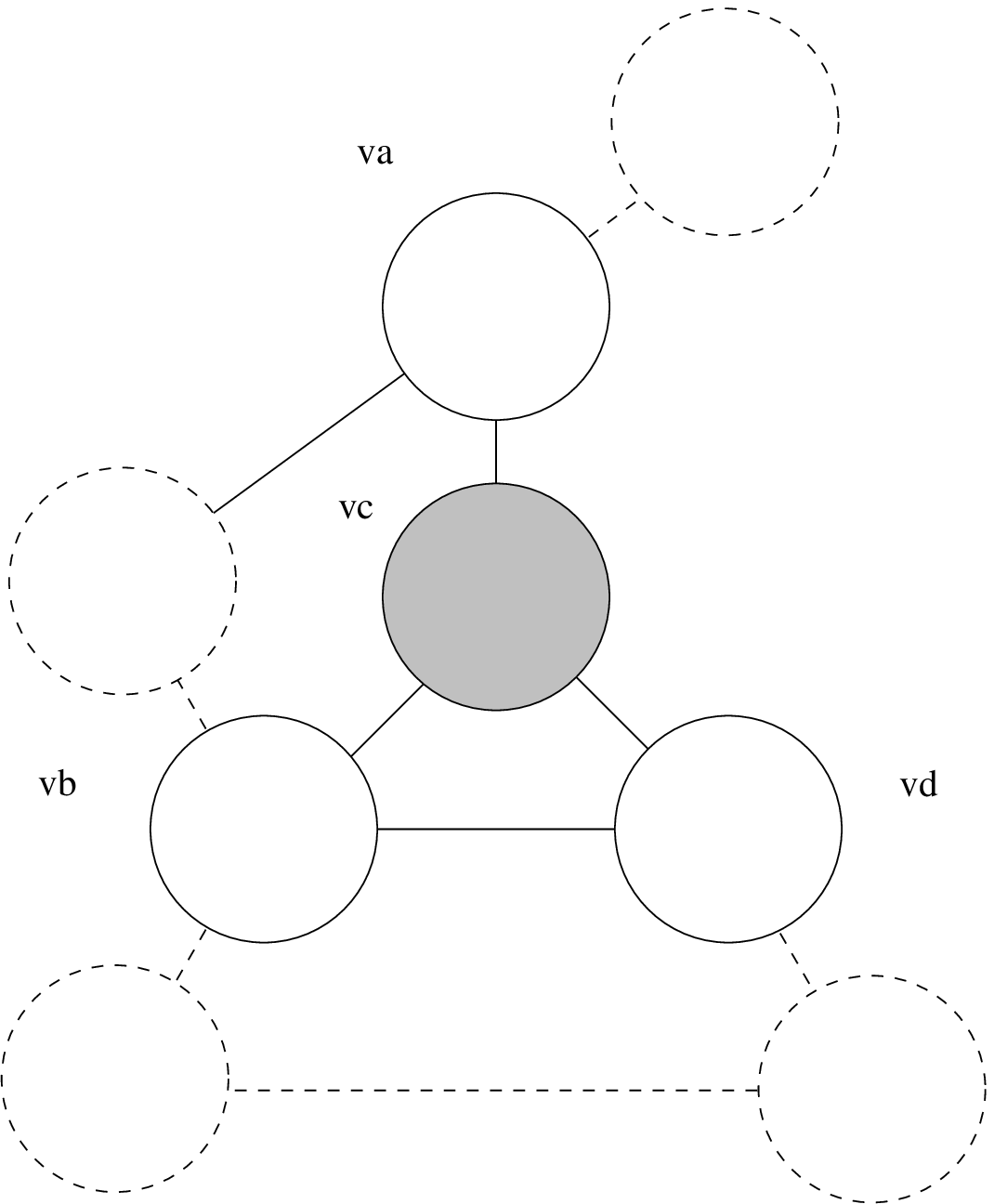}
    \end{array}
    \right)\\
    =
    \Phi_u(x)-\Phi_u(0) &+
    J
    \left(
    \begin{array}{clr}
    \includegraphics[scale=0.21]{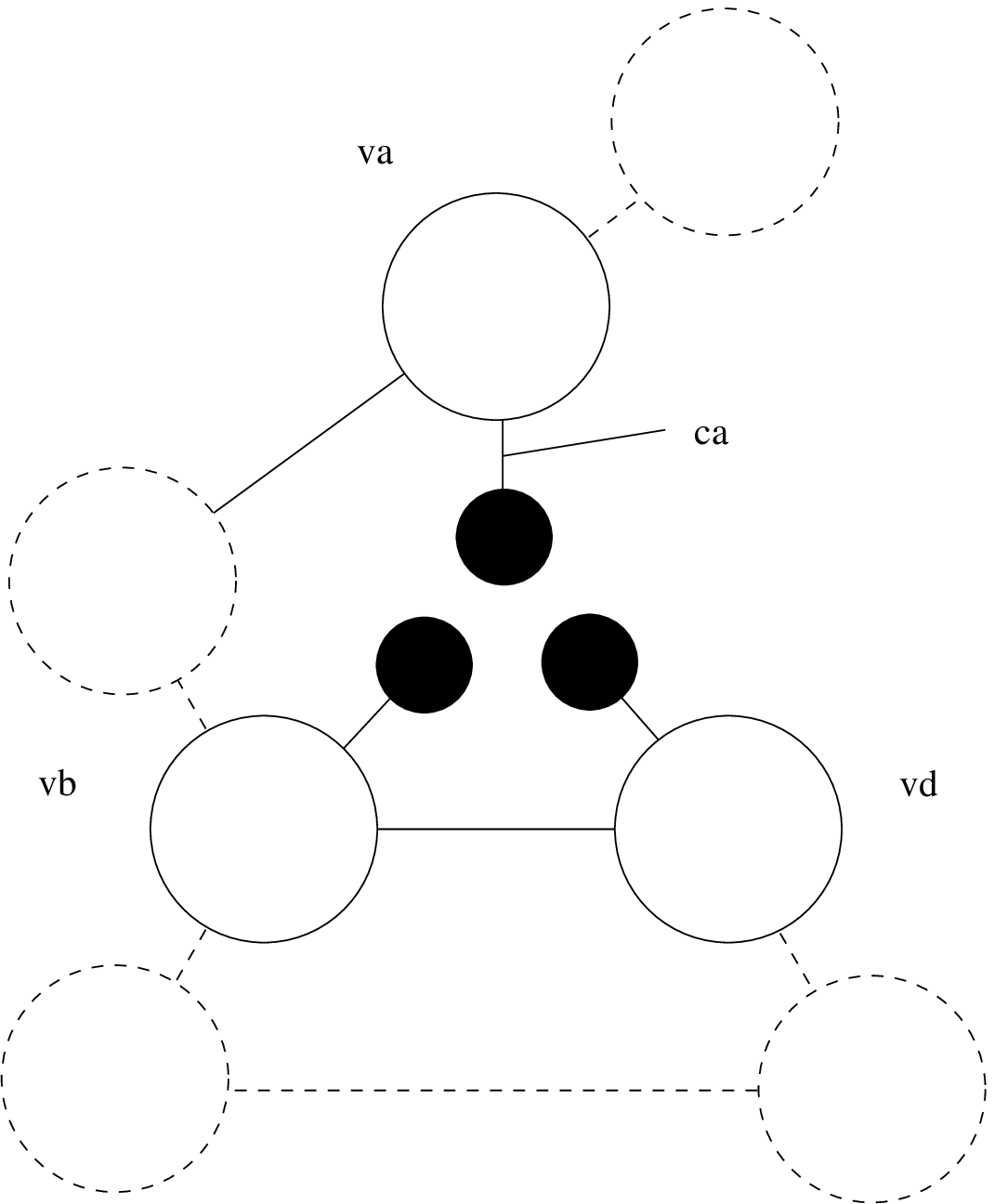}
    \end{array}
    \right)-
    J
    \left(
    \begin{array}{clr}
    \includegraphics[scale=0.21]{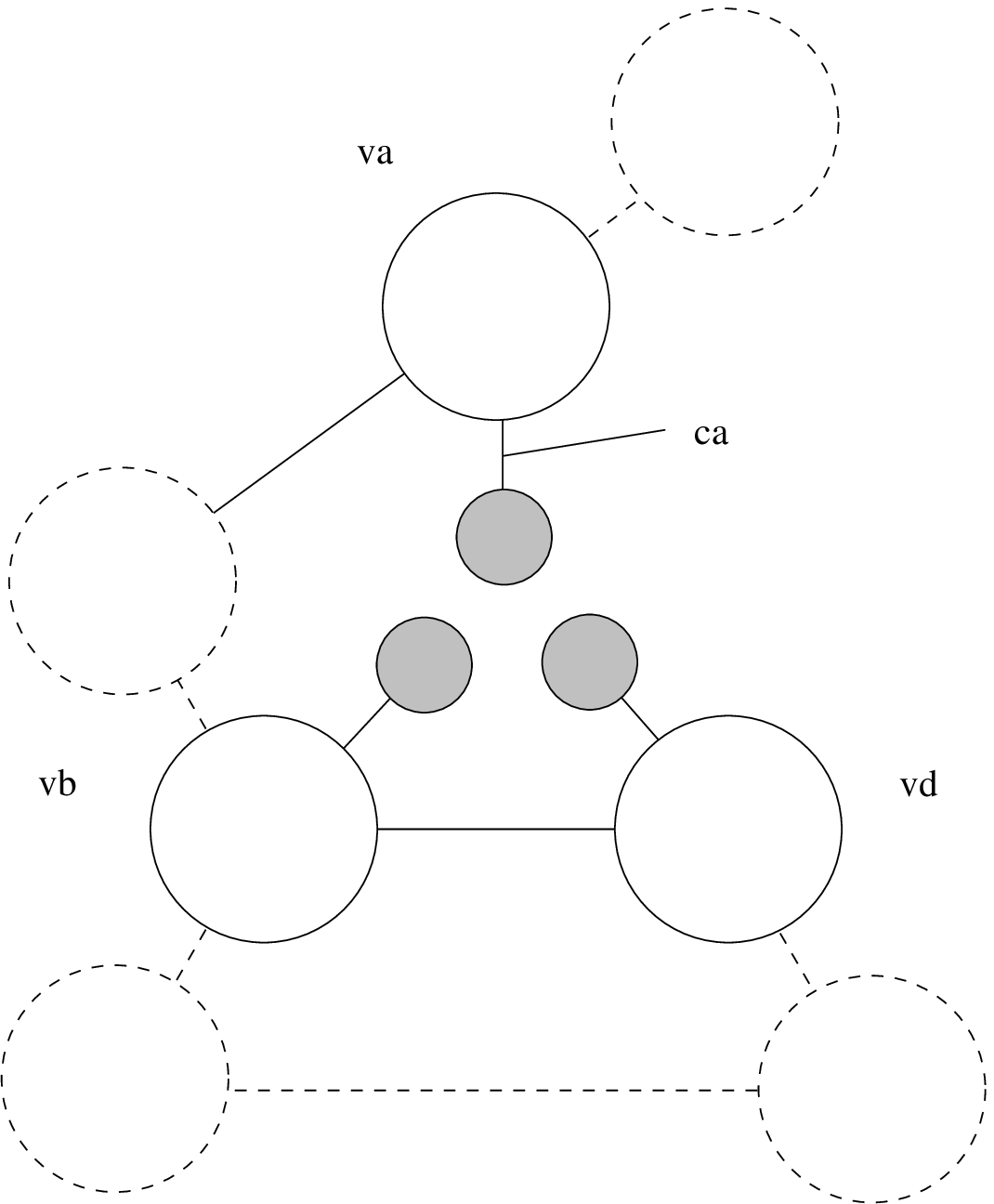}
    \end{array}
    \right)\\
    =
    \Phi_u(x)-\Phi_u(0) &+
    J
    \left(
    \begin{array}{clr}
    \includegraphics[scale=0.21]{cavd.eps}
    \end{array}
    \right)-
    J
    \left(
    \begin{array}{clr}
    \includegraphics[scale=0.21]{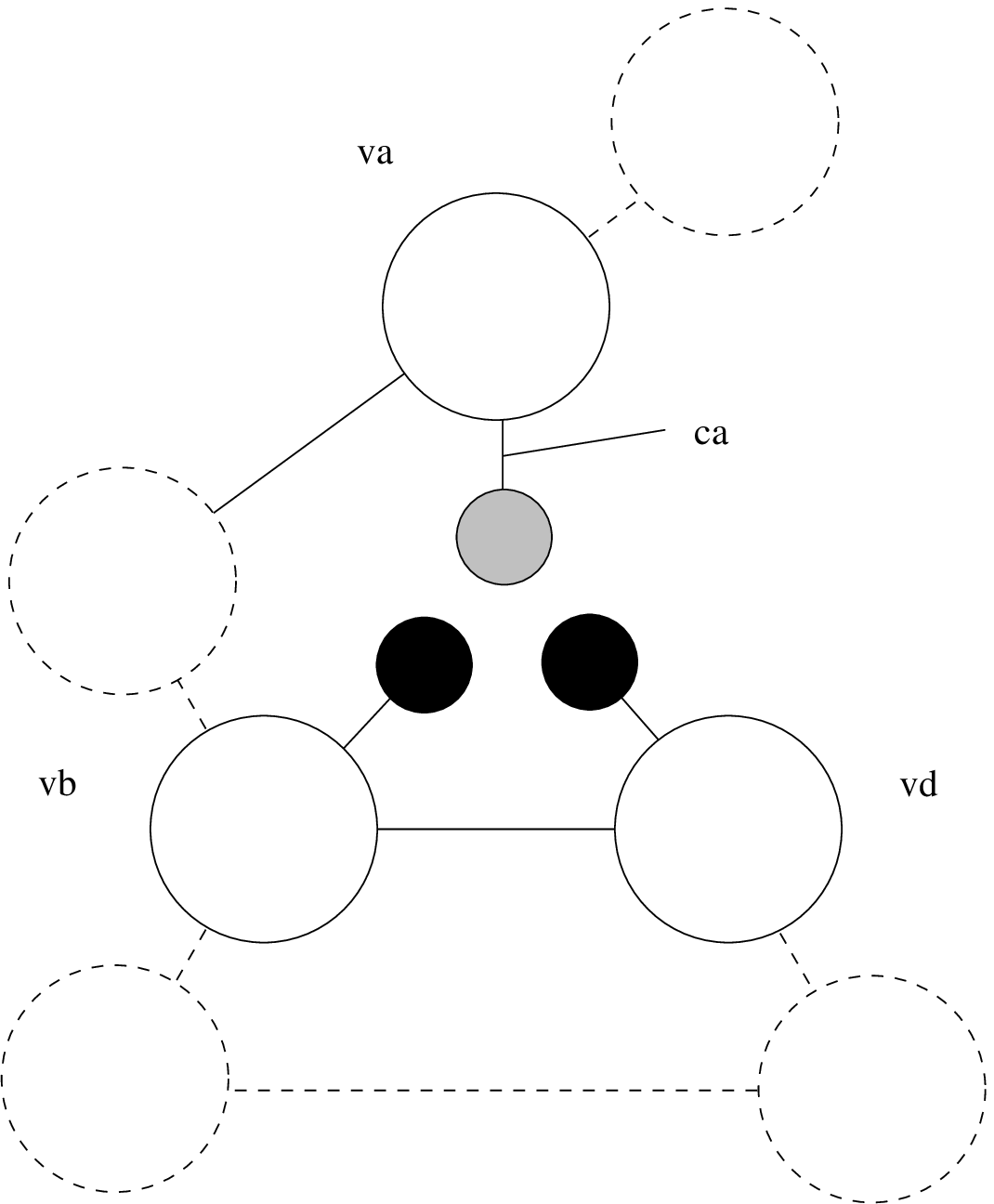}
    \end{array}
    \right)\\
    &+J
    \left(
    \begin{array}{clr}
    \includegraphics[scale=0.21]{cave.eps}
    \end{array}
    \right)-
    J
    \left(
    \begin{array}{clr}
    \includegraphics[scale=0.21]{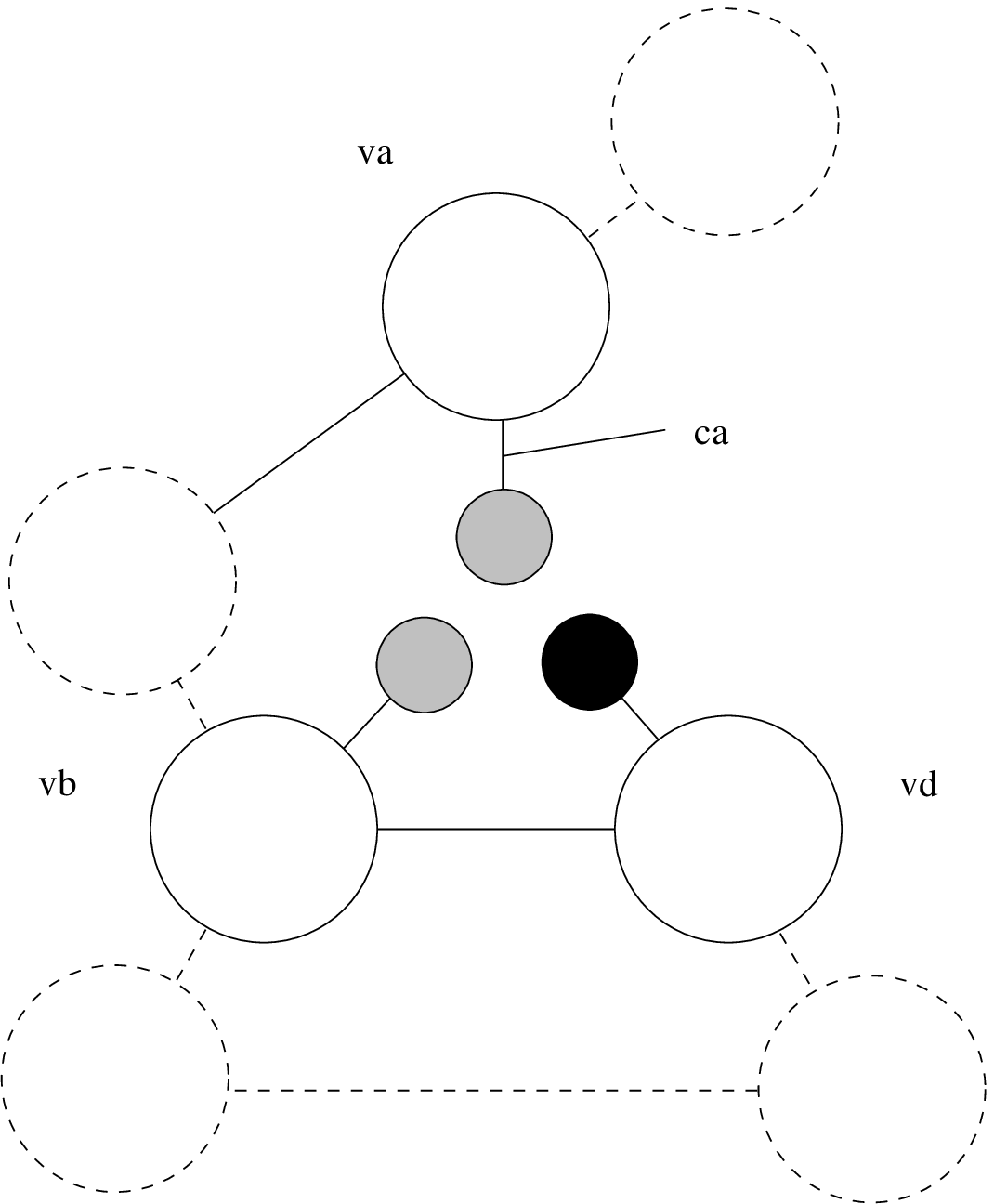}
    \end{array}
    \right)\\
    &+J
    \left(
    \begin{array}{clr}
    \includegraphics[scale=0.21]{cavf.eps}
    \end{array}
    \right)-
    J
    \left(
    \begin{array}{clr}
    \includegraphics[scale=0.21]{cavg.eps}
    \end{array}
    \right)
\end{align*}
\end{center}
\label{fig:firststep}
\caption{First step: building the telescoping sum; black nodes indicate decision $x$, gray node decision $0$; solid circles indicate neighbors of $u$, dotted circles indicate other nodes}
\end{figure}

\begin{figure}
\psfrag{vc}[c][c][1]{$u$}
\psfrag{va}[c][c][1]{$v_1$}
\psfrag{vb}[c][c][1]{$v_2$}
\psfrag{vd}[c][c][1]{$v_3$}
\psfrag{ca}[l][l][1]{$\Phi_{u,v_1}$}
\psfrag{cb}[l][l][1]{$\Phi_{u,v_3}$}
\psfrag{cc}[l][l][1]{$\tilde \Phi_{v_1}$}
\psfrag{cd}[l][l][1]{$\tilde \Phi_{v_3}$}
\begin{center}
\begin{align*}
&J
    \left(
    \begin{array}{clr}
    \includegraphics[scale=0.25]{cave.eps}
    \end{array}
    \right)-
    J
    \left(
    \begin{array}{clr}
    \includegraphics[scale=0.25]{cavf.eps}
    \end{array}
    \right)
    =J
    \left(
    \begin{array}{clr}
    \includegraphics[scale=0.25]{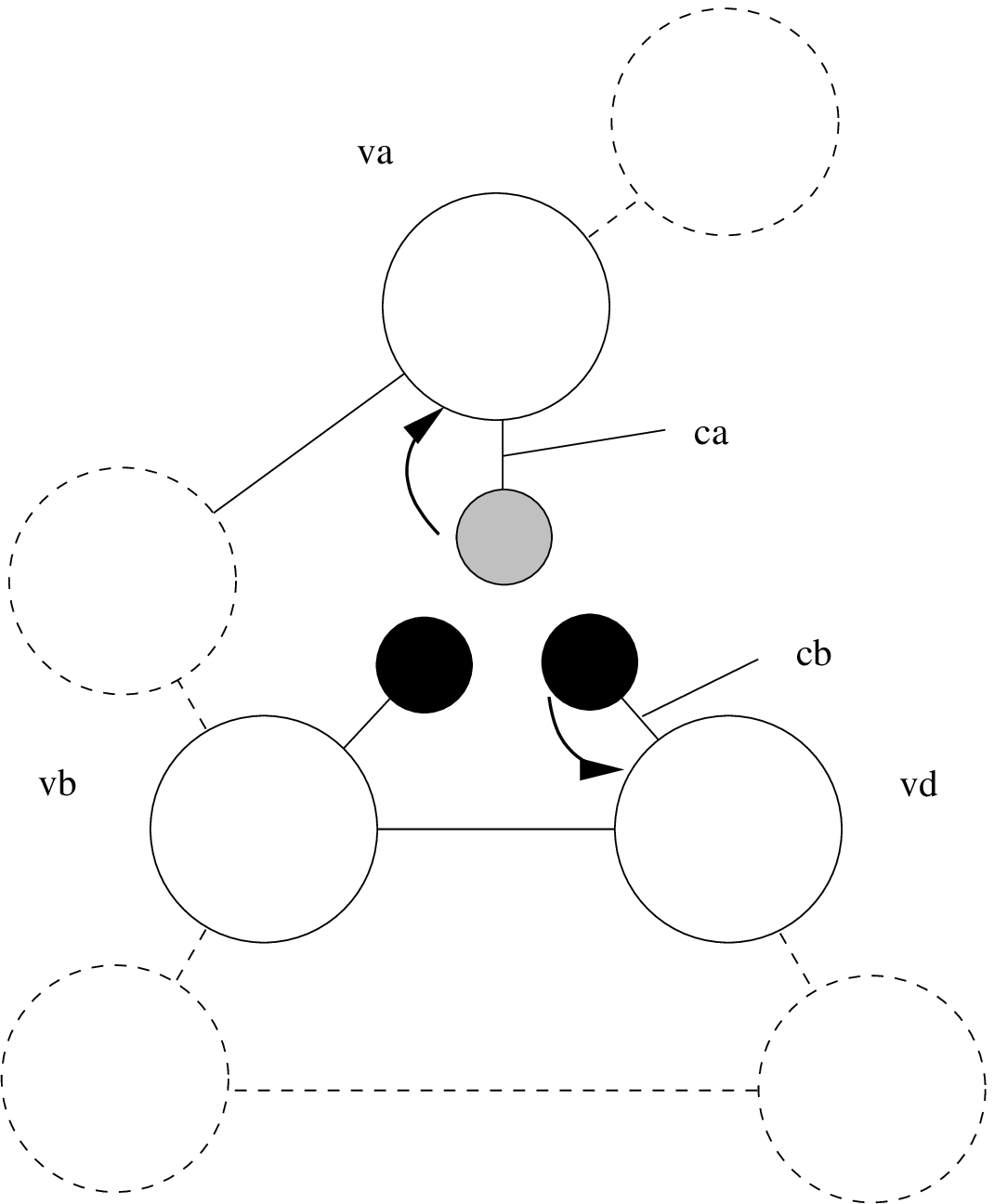}
    \end{array}
    \right)-
    J
    \left(
    \begin{array}{clr}
    \includegraphics[scale=0.25]{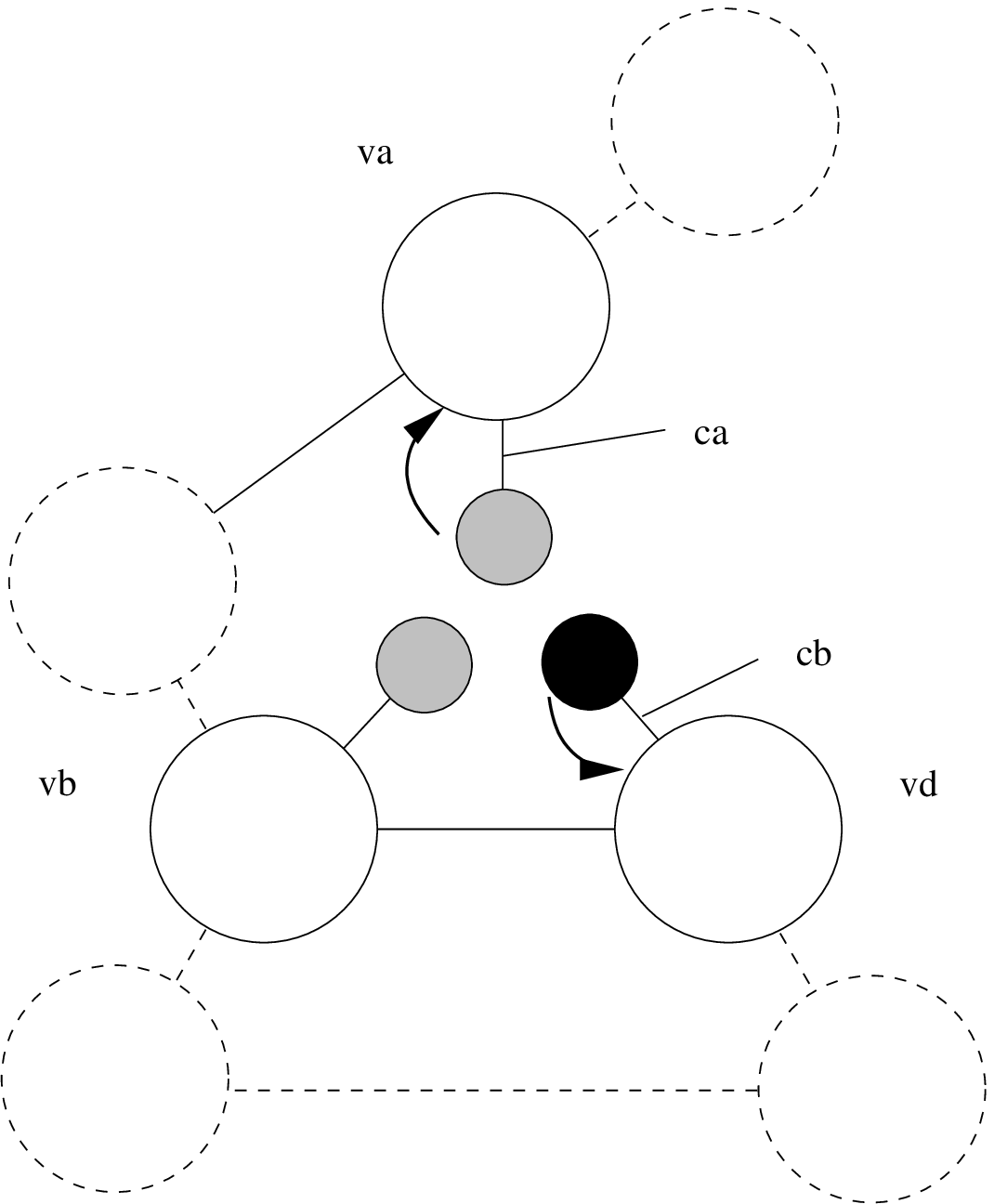}
    \end{array}
    \right)\\
      =&J
    \left(
    \begin{array}{clr}
    \includegraphics[scale=0.25]{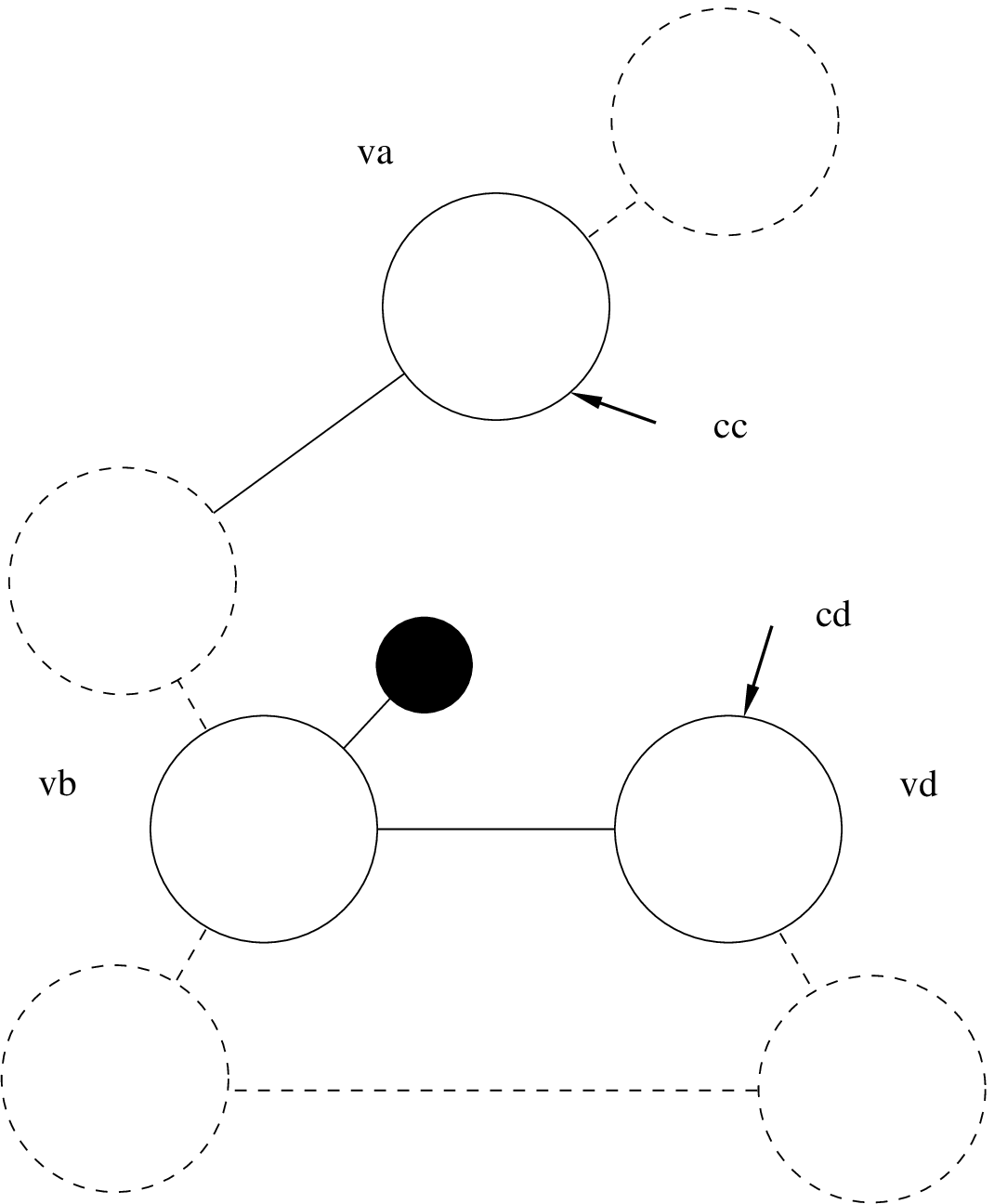}
    \end{array}
    \right)-
    J
    \left(
    \begin{array}{clr}
    \includegraphics[scale=0.25]{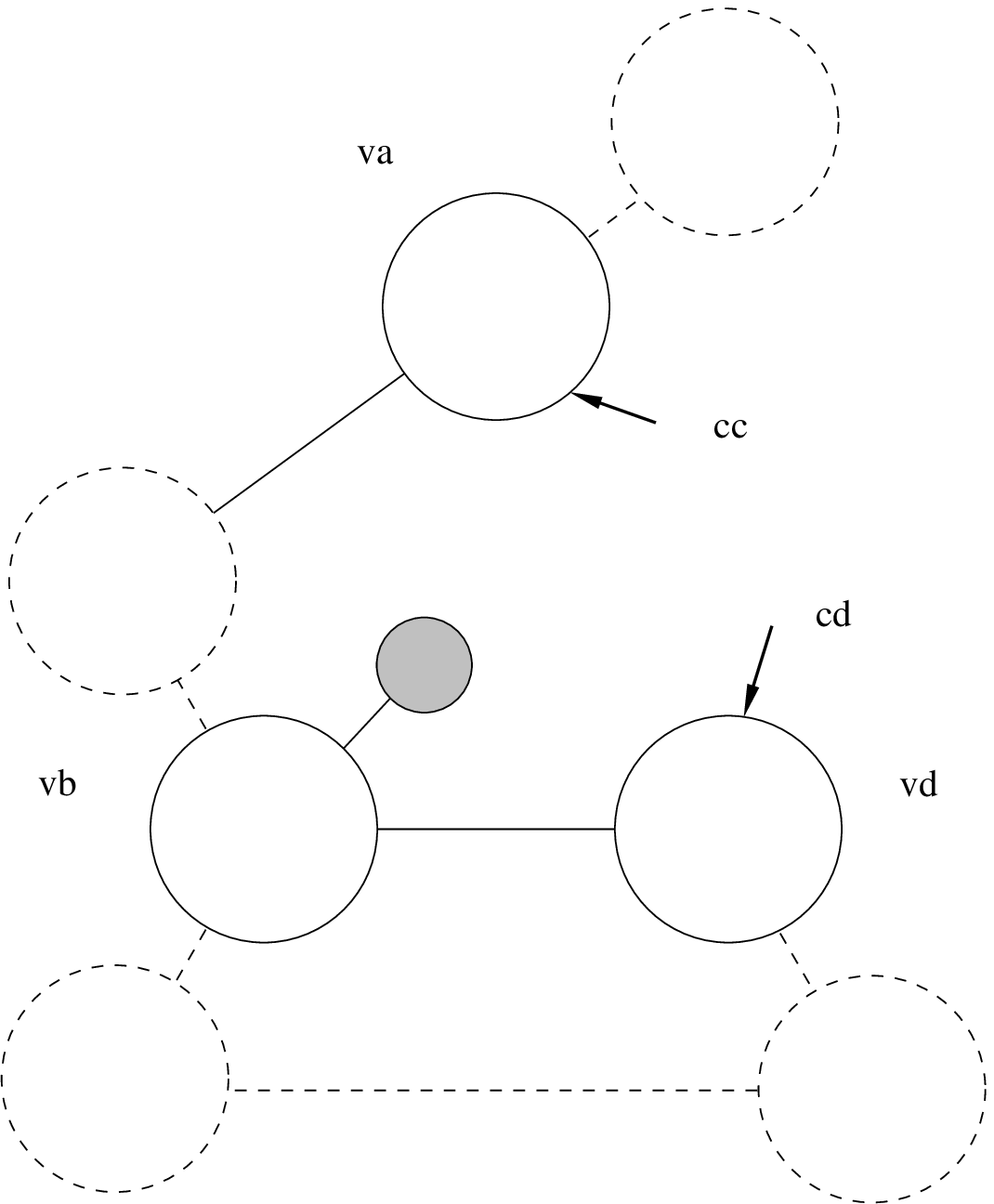}
    \end{array}
    \right)=\mu_{u\leftarrow v_2}(x,B_{\Graph(u,2,x),v_2})
    \end{align*}
    with
    \begin{align*}
    \tilde \Phi_{v_1}(z)=\Phi_{v_1}(z)+\Phi_{u,v_1}(0,z)\\
    \tilde \Phi_{v_3}(z)=\Phi_{v_3}(z)+\Phi_{u,v_3}(x,z)
    \end{align*}
\end{center}
\label{fig:secondstep}
\caption{Second step: build the modified subnetworks (here $\Graph(u,2,x)$); arrow represents modification of the potential function by incorporating interaction function into them}
\end{figure}

\begin{Theorem}[Cavity Recursion]\label{theorem:CavityThm}
For every $x\in \chi$,
\begin{equation}\label{eq:CavEq}
B_u(x)=\Phi_u(x)-\Phi_u(0)+ \sum_{j=1}^d \mu_{u \leftarrow v_j}(x,B_{\Graph(u,j,x),v_j})
\end{equation}
\end{Theorem}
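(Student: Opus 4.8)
The plan is to generalize the telescoping-sum argument already used for trees in Proposition~\ref{CavTree}, exploiting the sequence of auxiliary networks $\Graph(u,j,x)$ to handle the fact that the components of $\Graph\setminus\{u\}$ need not be disconnected. First I would observe that $B_u(x) = J_{\Graph,u}(x) - J_{\Graph,u}(0)$ by definition, and insert a telescoping sum over the neighbors $v_1,\dots,v_d$ of $u$: writing $J_k$ for the optimal value of $\Graph$ when $u$ is fixed to $x$, the first $k$ neighbors are fixed to interact with $u$ as if $u=x$, and the remaining $d-k$ as if $u=0$, we have $B_u(x) = \Phi_u(x)-\Phi_u(0) + \sum_{j=1}^d (J_{j}^{\text{mod}} - J_{j-1}^{\text{mod}})$, where the hard constraint $x_u = x$ has been replaced by absorbing the interaction terms $\Phi_{u,v_i}(x,\cdot)$ or $\Phi_{u,v_i}(0,\cdot)$ into the potential functions $\Phi'_{v_i}$ as in \eqref{eq:GraphModified}. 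This is exactly what Figure~\ref{fig:firststep} illustrates.

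The key point is then to identify each telescoping difference $J_j^{\text{mod}} - J_{j-1}^{\text{mod}}$ with $\mu_{u\leftarrow v_j}(x, B_{\Graph(u,j,x),v_j})$. The two networks appearing in this difference are identical except that in one, the potential at $v_j$ includes $\Phi_{u,v_j}(x,\cdot)$, and in the other it includes $\Phi_{u,v_j}(0,\cdot)$; both are obtained from $\Graph(u,j,x)$ by further modifying only the potential at $v_j$. So I would write each of the two optimal values by conditioning on the action $y$ at $v_j$: $\max_y\big(\Phi_{u,v_j}(x,y) + J_{\Graph(u,j,x),v_j}(y)\big)$ and $\max_y\big(\Phi_{u,v_j}(0,y) + J_{\Graph(u,j,x),v_j}(y)\big)$ respectively. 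Subtracting $J_{\Graph(u,j,x),v_j}(0)$ inside both maxima (which does not change their difference) converts $J_{\Graph(u,j,x),v_j}(y)$ into $B_{\Graph(u,j,x),v_j}(y)$, and the difference becomes precisely $\mu_{u\leftarrow v_j}(x, B_{\Graph(u,j,x),v_j})$ by \eqref{MuDefine}. Summing over $j$ gives \eqref{eq:CavEq}. Figure~\ref{fig:secondstep} is the pictorial version of this second step.

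The main obstacle — really the only subtle point — is bookkeeping the auxiliary networks carefully: one must check that in the $j$-th telescoping step the two networks being compared differ \emph{only} at node $v_j$'s potential, and that the common part is exactly $\Graph(u,j,x)$ with its potentials as defined in \eqref{eq:GraphModified} (neighbors $v_1,\dots,v_{j-1}$ carry the $x$-interaction, neighbors $v_{j+1},\dots,v_d$ carry the $0$-interaction, and $v_j$ itself is left unmodified in $\Graph(u,j,x)$ so that the $\mu$ operator can supply either $\Phi_{u,v_j}(x,\cdot)$ or $\Phi_{u,v_j}(0,\cdot)$). Once the indexing convention is fixed this is routine, but it is where an error would most easily creep in. No correlation-decay or probabilistic input is needed here; the statement is a deterministic algebraic identity valid for every network, and the proof is a finite manipulation of maxima, exactly parallel to the tree case but with the disconnectedness of subtrees replaced by the device of folding interaction terms into neighboring potentials.
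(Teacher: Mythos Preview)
Your proposal is correct and follows essentially the same approach as the paper: a telescoping sum over the neighbors of $u$ (the paper introduces the variables $x_{j,k}$ to index the intermediate terms, corresponding to your $J_j^{\text{mod}}$), followed by isolating the $j$-th term, recognizing the inner optimization as $J_{\Graph(u,j,x),v_j}(z_j)$, and subtracting $J_{\Graph(u,j,x),v_j}(0)$ to obtain the cavity and hence $\mu_{u\leftarrow v_j}$. Your identification of the bookkeeping of the auxiliary networks as the only delicate point matches the paper's treatment exactly.
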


\begin{proof}
For every $k=0,1,\ldots,d$, let $x_{j,k}=x$ when $j\le k$ and $=0$ otherwise. Let $\bold{v}=({v_1},\ldots,{v_d})$, and $\bold{z}=(z_{1},\ldots,z_{d})\in \chi^d$. We have
\begin{align*}
    B_{u}(x)=\Phi_u(x)-\Phi_u(0)
    &+\max_{\bold{z}} \Big \{\sum_{j=1}^{d} \Phi_{u,v_j}(x,z_j)+\JiA{\Graphsans{u}}{\bold{v}}{\bold{z}} \Big \} \\
    &-\max_{\bold{z}} \Big \{ \sum_{j=1}^{d} \Phi_{u,v_j}(0,z_j)+\JiA{\Graphsans{u}}{\bold{v}}{{\bold{z}}}  \Big \}.
\end{align*}
\comment{
=\Phi_u(x)-\Phi_u(0)
&+ \max_{\bold{z}}\Big \{\sum_{j=1}^{d} \Phi_{u,v_j}(\bold{x}^d(j),z_j)+\JiA{\Graphsans{u}}{\bold{v}}{\bold{z}} \Big \} \\
 &- \max_{\bold{z}} \Big \{ \sum_{j=1}^{d} \Phi_{u,v_j}(\bold{x}^0(j),z_j)+\JiA{\Graphsans{u}}{\bold{v}}{{x(\bold{v})}}
              \Big \}
\end{align*}
The variables $\bold{x}^j$ will allow us to construct a telescoping sum. Intuitively, the telescoping sum will be a sum of cavities, where the value at node $u$ will change from $0$ to $x$ incrementally (as opposed to a direct change as in the equations above).} The first step of the proof consists in considering the following telescoping sum (see figure \ref{fig:firststep}):
\begin{align}
     B_{u}(x)=
    \Phi_u(x)-\Phi_u(0)&+\sum_{k=1}^{d} \Bigg[\max_{\bold{z}} \Big \{\sum_{j=1}^{d} \Phi_{u,v_j}(x_{j,k},z_j)+
    \JiA{\Graphsans{u}}{\bold{v}}{\bold{z}} \Big \} \label{Cavity}\\
       \notag
               &-\max_{\bold{z}} \Big \{ \sum_{j=1}^{d} \Phi_{u,v_j}(x_{j,k-1},z_j)+
               \JiA{\Graphsans{u}}{\bold{v}}{{\bold{z}}}  \Big \} \Bigg]
\end{align}
and the $k^{\text{th}}$ difference:
\begin{align}\label{quick5}
    \max_{\bold{z}} \Big \{ \sum_{j=1}^{d} \Phi_{u,v_j}(x_{j,k},z_j)+\JiA{\Graphsans{u}}{\bold{v}}{\bold{z}}  \Big \}
-\max_{\bold{z}} &\Big \{ \sum_{j=1}^{d} \Phi_{u,v_j}(x_{j,k-1},z_j)+\JiA{\Graphsans{u}}{\bold{v}}{\bold{z}}  \Big \}
\end{align}
Let $\bold{z}_{-k}=(z_1,\ldots,z_{k-1},z_{k+1},\ldots,z_d)$. Then,
\begin{align}\label{quick6}
    &\max_{\bold{z}} \Big \{ \sum_{j=1}^{d} \Phi_{u,v_j}(x_{j,k},z_j)+\JiA{\Graphsans{u}}{\bold{v}}{\bold{z}}  \Big \}=\notag \\
    & \max_{z_k} \Big(\Phi_{u,v_k}(x,z_k) + \max_{\bold{z}_{-k}} \big \{ \sum_{j\leq k-1} \Phi_{u,v_j}(x,z_j)+\sum_{j\geq k+1} \Phi_{u,v_j}(0,z_j)+\JiA{\Graphsans{u}}{\bold{v}}{\bold{z}}  \big \} \Big )
\end{align}
Similarly,
\begin{align}\label{quick7}
    &\max_{\bold{z}} \Big \{ \sum_{j=1}^{d} \Phi_{u,v_j}(x_{j,k-1},z_j)+\JiA{\Graphsans{u}}{\bold{v}}{\bold{z}}  \Big \}=\notag \\
    & \max_{z_k}\Big( \Phi_{u,v_k}(0,z_k) + \max_{\bold{z}_{-k}} \big \{
    \sum_{j\leq k-1} \Phi_{u,v_j}(x,z_j)+\sum_{j\geq k+1} \Phi_{u,v_j}(0,z_j)+\JiA{\Graphsans{u}}{\bold{v}}{\bold{z}}  \big \}\Big)
\end{align}
For each $z_k$, we have (see figure \ref{fig:secondstep}):
$$\max_{\bold{z}_{-k}} \Big \{ \sum_{j\leq k-1}\Phi_{u,v_j}(x,z_j)+\sum_{j\geq k+1} \Phi_{u,v_j}(0,z_j)+\JiA{\Graphsans{u}}{\bold{v}}{\bold{z}}  \Big \}=J_{\Graph(u,k,x),v_k}(z_k)$$
By adding and substrating $J_{\Graph(u,k,x),v_k}(0)$, expression (\ref{quick5}) can therefore be rewritten as
\begin{eqnarray*}
     \max_{y} (\Phi_{u,v_k}(x,y) + B_{\Graph(u,k,x)}(y))- \max_{y} (\Phi_{u,v_k}(0,y) + B_{\Graph(u,k,x)}(y))
\end{eqnarray*}
which is exactly $\mu_{u \leftarrow v_k}(x,B_{\Graph(u,k,x)})$. Finally, we obtain: $$B_u(x)=\Phi_u(x)-\Phi_u(0)+ \sum_{k=1}^d \mu_{u \leftarrow v_k}(x,B_{\Graph(u,k,x),v_k})$$

\end{proof}

\subsection{Computation tree and the Cavity Expansion algorithm}\label{sec:CE}

Given a decision network $\Graph$, a node $u \in V$ with $\mathcal{N}_u=\{v_1,\ldots,v_d\}$, and $r\in \mathbb{Z}_+$,  introduce a vector $\text{CE}[\Graph,u,r]=(\text{CE} [\Graph,u,r,x], x\in \chi)\in \Real^T$ defined recursively as follows.

\begin{enumerate}
    \item $\text{CE}[\Graph,u,0,x]=0$
    \item For every  $r=1,2,\ldots$, and every $x\in \chi$,
    \begin{align}\label{eq:CEeq}
    \text{CE}[\Graph,u,r,x]=\Phi_u(x)-\Phi_u(0)+
    \sum_{j=1}^d \mu_{u \leftarrow v_j}\Big(x,\text{CE}[\Graph(u,j,x),v_j,r-1]\Big),
    \end{align}
\end{enumerate}
where $\Graph(u,k,x)$ is defined in Subsection~\ref{subsection:generalgraphs}, and the sum $\sum_{j=1}^d$ is equal to $0$ when $\mathcal{N}_u=\emptyset$. Note that from the definition of $\Graph(u,k,x)$, the definition and output of $\text{CE}[\Graph,u,r]$ depend on the order in which the neighbors $v_j$ of $u$ are considered. $\text{CE}[\Graph,u,r]$ serves as an $r$-step approximation, in some appropriate sense to be explained later, of the cavity vector $B_{\Graph,u}$. The motivation for this definition is relation (\ref{eq:CavEq}) of Theorem~\ref{theorem:CavityThm}. The local cavity approximation can be computed using an algorithm described below, which we call \emph{Cavity Expansion (CE)} algorithm.
\sectionline
{\label{Code:CE}\tt {Cavity Expansion: CE}[$\Graph,u,r,x$]\\
INPUT: A network $\Graph$, a node $u$ in $\Graph$, an action $x$ and a computation depth $r\geq 0$\\
BEGIN\\
If $r=0$ return $0$\\
else do\\
Find neighbors $\mathcal{N}(u)=\{v_1,v_2,\ldots,v_d\}$ of $u$ in $\Graph$.\\
If $\mathcal{N}(u)=\emptyset$,
return $\Phi_u(x)-\Phi_u(0)$. \\
Else\\
For each $j=1,\ldots, d$, construct the network $\Graph(u,j,x)$.\\
For each $j=1,\ldots, d$, and  $y\in\chi$, compute $\text{CE}[\Graph(u,j,x),v_j,r-1,y]$\\
For each $j=1,\ldots, d$, compute $\mu_{u\leftarrow v_j}(x,\text{CE}[\Graph(u,j,x),v_j,r-1,y])$\\
Return $\Phi_u(x)-\Phi_u(0) + \sum_{1\le j\le d} \mu_{u\leftarrow v_j}(x,\text{CE}[\Graph(u,j,x),v_j,r-1,y])$ as $\text{CE}[\Graph,u,r,x]$.
}
\sectionline

The algorithm above terminates because $r$ decreases by one at each recursive call of the algorithm. As a result, an initial call to $\text{CE}[\Graph,u,r,x]$ will result in a finite number of recursive calls to some $\text{CE}[\Graph_j,u_j,k_j,x_j]$, where $k_j<r$. Let $(\Graph_i,v_i,x_i)_{1\leq i \leq m}$ be the subset of arguments for the calls used in computing $\text{CE}[\Graph,u,r,x]$ for which $k_i=0$. In the algorithm above, the values returned for $r=0$ are $0$, but it can be generalized by choosing a value $\mathcal{C}_i$ for the call $\text{CE}[\Graph_i,v_i,0,x_i]$.

The set of values $\mathcal{C}=(\mathcal{C}_i)_{1\leq i \leq m}$ will be  called a \emph{boundary condition}. We denote by $\text{CE}[\Graph,u,r,x,\mathcal{C}]$ the output of the cavity algorithm with boundary condition $\mathcal{C}$. The interpretation of $\text{CE}[\Graph,u,r,x,\mathcal{C}]$ is that it is an estimate of the cavity $B_{\Graph,u}(x)$ via $r$ steps of recursion (\ref{eq:CavEqTree}) when the recursion is initialized by setting $\text{CE}[\Graph_i,u_i,0,x_i]=\mathcal{C}_i$ and is run $r$ steps. We will sometimes omit $\mathcal{C}$ from the notation when such specification is not necessary. Call $\mathcal{C^{*}}=(\mathcal{C}^{*}_i)\eqdef (B_{\Graph_i,v_i}(x_i))$  the ``true boundary condition''. The justification comes from the following proposition, the proof of which follows directly from Theorem~\ref{theorem:CavityThm}.

\begin{Proposition}
Given node $u$ and $\mathcal{N}(u)=\{v_1,\ldots,v_d\}$, suppose
for every $j=1,\ldots, d$ and $y\in \chi$,
$\text{CE}[\Graph(u,j,x),v_j,r-1,y]=B_{\Graph(u,j,x),v_j}(y)$; then,
$\text{CE}[\Graph,u,r,x]=B_{\Graph,u}(x)$.
\end{Proposition}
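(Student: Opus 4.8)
The plan is to invoke Theorem~\ref{theorem:CavityThm} directly and observe that the $r$-step recursion \eqref{eq:CEeq} has exactly the same algebraic form as the exact cavity recursion \eqref{eq:CavEq}. More precisely, I would argue as follows. Fix $u$, $x$, and assume the hypothesis that $\text{CE}[\Graph(u,j,x),v_j,r-1,y]=B_{\Graph(u,j,x),v_j}(y)$ for all $j=1,\ldots,d$ and all $y\in\chi$; in other words, the vector $\text{CE}[\Graph(u,j,x),v_j,r-1]$ coincides with the cavity vector $B_{\Graph(u,j,x),v_j}$ as elements of $\Real^{T}$.

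First I would expand the left-hand side using the defining recursion \eqref{eq:CEeq} with $\Graph$ the ambient network: since $\mathcal{N}(u)=\{v_1,\ldots,v_d\}$,
\begin{align*}
\text{CE}[\Graph,u,r,x]=\Phi_u(x)-\Phi_u(0)+\sum_{j=1}^d \mu_{u\leftarrow v_j}\big(x,\text{CE}[\Graph(u,j,x),v_j,r-1]\big).
\end{align*}
Then I would substitute the hypothesis into each summand, using that $\mu_{u\leftarrow v_j}(x,\cdot)$ depends on its second argument only through the vector $B=(B(y))_{y\in\chi}$ (see \eqref{MuDefine}); hence $\mu_{u\leftarrow v_j}\big(x,\text{CE}[\Graph(u,j,x),v_j,r-1]\big)=\mu_{u\leftarrow v_j}\big(x,B_{\Graph(u,j,x),v_j}\big)$ for each $j$. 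This yields
\begin{align*}
\text{CE}[\Graph,u,r,x]=\Phi_u(x)-\Phi_u(0)+\sum_{j=1}^d \mu_{u\leftarrow v_j}\big(x,B_{\Graph(u,j,x),v_j}\big),
\end{align*}
and the right-hand side is precisely the expression for $B_{\Graph,u}(x)$ given by Theorem~\ref{theorem:CavityThm}. Therefore $\text{CE}[\Graph,u,r,x]=B_{\Graph,u}(x)$, as claimed.

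There is essentially no obstacle here — the statement is a one-line corollary of the Cavity Recursion theorem together with the observation that the partial cavity function $\mu$ is a function of the \emph{vector} of cavity values rather than of the subnetwork itself. The only points worth stating carefully are (i) that the networks $\Graph(u,j,x)$ appearing in \eqref{eq:CEeq} are the same as those in \eqref{eq:CavEq} by construction (both are defined via Subsection~\ref{subsection:generalgraphs}, with the neighbor ordering fixed), so the indexing matches summand by summand, and (ii) that the equality of $T$-dimensional vectors $\text{CE}[\Graph(u,j,x),v_j,r-1]=B_{\Graph(u,j,x),v_j}$ — which is what the per-coordinate hypothesis over all $y\in\chi$ amounts to — is exactly the input needed to make the substitution into $\mu$ legitimate. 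No probabilistic or structural argument is needed, and no case analysis; the whole proof is the chain of equalities above.
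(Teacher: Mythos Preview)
Your proposal is correct and matches the paper's approach: the paper states that the result ``follows directly from Theorem~\ref{theorem:CavityThm}'' without spelling out any details, and your argument is exactly the intended one-line substitution of the hypothesis into \eqref{eq:CEeq} to recover \eqref{eq:CavEq}.
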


As a result, if $\mathcal{C}$ is the ``correct'' boundary condition, then $\text{CE}[\Graph,u,r,x,\mathcal{C}]=B_{\Graph,u}(x)$ for every $u,r,x$. The execution of the Cavity Expansion algorithm can be visualized as a computation on a tree, due to its recursive nature. This has some similarity with a computation tree associated with the performance of the Belief Propagation algorithm, \cite{tatikonda2002lbp,sanghavi2008mpm,bayati2008mpm}.  The important difference with \cite{tatikonda2002lbp}  is that the presence of cycles is incorporated via the construction $\Graph(u,j,x)$ (similarly to \cite{weitzCounting,jung2006ibp,bayati2007sda, gamarnik2007daa,gamarnik2007cda}.  As a result, the computation tree of the CE is finite (though often extremely large), as opposed to the BP computation tree.

An important lemma, which we will use frequently in the rest of the paper, states that in the computation tree of the cavity recursion, the cost function of an edge cost is statistically independent from the subtree below that edge.

\begin{Proposition}\label{IndependenceFixLemma}
Given  $u,x$ and  $\mathcal{N}(v)=\{v_1,\ldots,v_d\}$, for every $r,j=1,\ldots,d$ and $y \in \chi$, $CE[\Graph(u,j,x),v_j,r-1,y]$ and $\Phi_{u,v_j}$ are independent.
\end{Proposition}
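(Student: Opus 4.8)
\textbf{Proof proposal for Proposition~\ref{IndependenceFixLemma}.}

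The plan is to argue by tracking, through the recursion defining $\text{CE}[\Graph(u,j,x),v_j,r-1,y]$, exactly which reward functions of the original network can ever be consulted, and to verify that $\Phi_{u,v_j}$ is never among them. First I would recall that, by construction of $\Graph(u,j,x)$ in Subsection~\ref{subsection:generalgraphs}, the edge $(u,v_j)$ is deleted: $E' = E \setminus \{(u,v_1),\ldots,(u,v_d)\}$, so $\Phi_{u,v_j}$ is not an interaction function of $\Graph(u,j,x)$. The only place the reward data of a deleted edge $(u,v_\ell)$ re-enters the modified network is through the potential functions of the surviving endpoints $v_\ell$ via \eqref{eq:GraphModified}: $\Phi'_{v_\ell}(\cdot) = \Phi_{v_\ell}(\cdot) + \Phi_{u,v_\ell}(x,\cdot)$ for $\ell < j$ and $\Phi'_{v_\ell}(\cdot) = \Phi_{v_\ell}(\cdot) + \Phi_{u,v_\ell}(0,\cdot)$ for $\ell > j$. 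Crucially, for the particular index $\ell = j$ itself, no such term is added: $v_j$ falls into the set $V \setminus \{u,v_1,\ldots,v_{j-1},v_{j+1},\ldots,v_d\}$, so $\Phi'_{v_j} = \Phi_{v_j}$. Hence $\Phi_{u,v_j}$ appears \emph{nowhere} in the reward data $\Phi'$ of $\Graph(u,j,x)$ — neither as an interaction function nor folded into any potential function.

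Next I would formalize the intuitive claim that $\text{CE}[\Graph',v_j,r-1,y]$ is a (deterministic) function only of the reward data $\Phi'$ of $\Graph'$ and the boundary values. This is immediate from the recursive definition \eqref{eq:CEeq}: unwinding the recursion, $\text{CE}[\Graph',v_j,r-1,y]$ is built from finitely many evaluations of potential functions $\Phi'_w(\cdot)$ and of partial cavity functions $\mu_{\cdot\leftarrow\cdot}$, which by \eqref{MuDefine} depend only on interaction functions $\Phi'_e$ and lower-order $\text{CE}$ values; and every further auxiliary network constructed along the way, $\Graph'(w,i,z)$, has reward data obtained from $\Phi'$ by the operations in \eqref{eq:GraphModified} — operations that only ever combine functions already present in $\Phi'$, never introducing anything new. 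So by an easy induction on $r$, the random variable $\text{CE}[\Graph(u,j,x),v_j,r-1,y]$ is a measurable function of $\{\Phi_v : v \in V'\} \cup \{\Phi_e : e \in E'\}$, none of which is $\Phi_{u,v_j}$.

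The conclusion then follows from the standing independence assumption: the rewards $\Phi_v, \Phi_e$ attached to distinct nodes and edges are mutually independent (dependence is allowed only among the finitely many values $\Phi_e(x,y)$ of a single fixed edge $e$ as $x,y$ range over $\chi$, and likewise for a single node). Since $\text{CE}[\Graph(u,j,x),v_j,r-1,y]$ is a function of rewards indexed by nodes in $V' = V\setminus\{u\}$ and edges in $E' \not\ni (u,v_j)$, it is independent of the whole block $\{\Phi_{u,v_j}(x,y): x,y\in\chi\}$, and in particular of $\Phi_{u,v_j}$ viewed as a vector. I would remark that this also extends to the finitely many values $y \in \chi$ simultaneously, since the same argument shows the entire collection $(\text{CE}[\Graph(u,j,x),v_j,r-1,y])_{y\in\chi}$ is a function of rewards disjoint from the edge $(u,v_j)$.

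\textbf{Main obstacle.} The only genuinely delicate point — and the one I would be most careful about — is verifying that \emph{no} auxiliary network arising deeper in the recursion secretly reintroduces $\Phi_{u,v_j}$ into its reward data. One must check that the modification rule \eqref{eq:GraphModified}, applied recursively to already-modified networks, can only form sums of rewards drawn from the current network's data; since $\Phi_{u,v_j}$ is absent from $\Graph(u,j,x)$ at the top level and absence is preserved under taking induced subgraphs and under the folding operation \eqref{eq:GraphModified}, an induction closes this. Everything else is bookkeeping.
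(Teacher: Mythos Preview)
Your argument is correct and follows exactly the paper's approach: observe that $\Phi_{u,v_j}$ is absent from the reward data of $\Graph(u,j,x)$ (the edge is deleted and, since $\ell=j$ is the one index skipped in \eqref{eq:GraphModified}, it is not folded into any potential), then conclude by the assumed independence across distinct nodes/edges. The paper's proof is a two-line version of this; you have simply spelled out the induction and measurability bookkeeping that the paper leaves implicit.
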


Note however that $\Phi_{u,v_j}$ and $CE[\Graph(u,k,x),v_k,r-1,y]$ are generally dependent when $j\not = k$

\begin{proof}
The proposition follows from the fact that for any $j$, the interaction function $\Phi_{u,v_j}$ does not appear in $\Graph(u,j,x)$, because node $u$ does not belong to $\Graph(u,j,x)$), and does not modify the potential functions of $\Graph(u,j,x)$ in the step (\ref{eq:GraphModified}).
\end{proof}
Our last proposition analyzes the complexity of running the Cavity Expansion algorithm.

\begin{Proposition}\label{prop:Complexity}
For every $\Graph,u,r,x$, the value $\text{CE}[\Graph,u,r,x]$ can be computed in time $O\big(r (\Delta T)^r\big)$.
\end{Proposition}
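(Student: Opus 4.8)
The plan is to analyze the recursion tree generated by a call to $\text{CE}[\Graph,u,r,x]$ and bound both its size and the work done at each node. The recursion \eqref{eq:CEeq} shows that a single call at recursion depth $r$ spawns, for each of the (at most $\Delta$) neighbors $v_j$ of $u$ and for each of the $T$ actions $y \in \chi$, a recursive call $\text{CE}[\Graph(u,j,x),v_j,r-1,y]$ at depth $r-1$. So from the point of view of the recursion tree, each node with remaining depth $\ge 1$ has at most $\Delta T$ children. Hence the total number of recursive calls is at most $\sum_{k=0}^{r}(\Delta T)^k = O\big((\Delta T)^r\big)$.

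Next I would account for the non-recursive work performed at each node of the recursion tree. At a node with remaining depth $r' \ge 1$ the algorithm: (i) finds the neighbors $\mathcal N(u)$, which is $O(\Delta)$; (ii) constructs the $d \le \Delta$ modified networks $\Graph(u,j,x)$, each obtained from $\Graph$ by deleting $u$ and updating the potentials of the $v_j$'s per \eqref{eq:GraphModified}, which is $O(\Delta)$ work per network if the graph is represented with adjacency lists and potentials are stored as length-$T$ tables (so $O(\Delta^2)$ total, or one can fold this into the implicit description passed down); (iii) for each $j$ and each $y$, combines the already-returned values via $\mu_{u\leftarrow v_j}(x,\cdot)$, where by \eqref{MuDefine} each $\mu$-evaluation is a maximization over $y \in \chi$, costing $O(T)$; and (iv) sums the $d$ partial cavities, $O(\Delta)$. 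Altogether the local work is $O(\Delta T)$ (treating the network-construction cost as dominated, or absorbed into a slightly larger but still $\text{poly}(\Delta,T)$ bound; the paper's $O\big(r(\Delta T)^r\big)$ bound already leaves slack for this).

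Multiplying the number of recursive calls, $O\big((\Delta T)^r\big)$, by a crude per-call bound is not quite enough to get the stated $O\big(r(\Delta T)^r\big)$; the clean way is to observe that there are at most $(\Delta T)^k$ calls at each level $k \in \{0,1,\ldots,r\}$, each doing $O(\Delta T)$ local work, giving total running time $O\big(\sum_{k=0}^{r-1} (\Delta T)^{k} \cdot \Delta T\big) = O\big(\sum_{k=0}^{r}(\Delta T)^{k}\big) = O\big((\Delta T)^{r}\big)$ when $\Delta T \ge 2$. The factor of $r$ in the statement is a safe over-estimate (and would be the honest bound if one charged, say, $O\big(r\,\text{poly}(\Delta,T)\big)$ for passing down or reconstructing the modified network at depth $r$, since the description of $\Graph(u,j,x)$ may itself carry $O(r)$ accumulated modifications down a branch of the tree). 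I would state the bound as in the proposition and note this slack.

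The only mild subtlety — the main thing to be careful about — is bookkeeping the cost of maintaining the networks $\Graph(u,j,x)$ along a root-to-leaf path of the recursion tree: naively copying the whole graph at each call is too expensive, so one either argues that only the local modification matters (passing $\Graph$ by reference plus a list of at most $r$ overrides to the potentials along the current branch), or one simply absorbs a $\text{poly}(|V|)$ factor, which is harmless for the downstream use where $r = O(\log(1/\epsilon))$ is constant-sized and the overall claim is polynomial-in-$|V|$ running time. Since the proposition is only used to certify polynomiality (for fixed $r$), I would keep the proof to the counting argument above and remark that the per-call overhead is polynomial in the network size, which suffices. I expect no real obstacle here; this is a routine recursion-tree size-times-local-work estimate.
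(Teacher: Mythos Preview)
Your proposal is correct and follows essentially the same approach as the paper: the paper also bounds the per-call local work by $O(\Delta T)$ and then uses the branching factor $\Delta T$ (via a straightforward induction on $r$) to arrive at the $O\big(r(\Delta T)^r\big)$ bound. Your recursion-tree counting is just an unrolled version of their inductive step, and your observations about the $r$ factor being slack and about the network-maintenance bookkeeping are valid refinements that the paper simply glosses over.
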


\begin{proof}
The computation time required to construct the networks $\Graph(u,j,x)$, compute the messages $\mu_{u\leftarrow v_j}(x,\hat B_{v_j})$, and return $\Phi_u(x)-\Phi_u(0) + \sum_{1\le j\le d} \mu_{u\leftarrow v_j}(x,\hat B_{v_j})$, is $O(\Delta T)$. Let us prove by induction that that for any subnetwork $\Graph'$ of $\Graph$, $\text{CE}[\Graph',u,r,x]$ can be computed in time bounded by $O(r (\Delta T)^r)$.
The values for $r=1$ can be computed in time bounded by $M$, since $\Graph'$ is a subnet of $\Graph$ and therefore of smaller size.
For $r>1$, the computations of $\text{CE}[\Graph',u,r,x]$ requires a fixed cost of $O(\Delta T)$, as well as $(\Delta T)$ calls to $\text{CE}$ with depth $(r-1)$. The total cost is therefore bounded by $O(\Delta T+(\Delta T)\: (r-1) (\Delta T)^{r-1})$, which is $O(r(\Delta T)^r)$.
\end{proof}

\section{Correlation decay and decentralized optimization}\label{sec:corrdecay}

In this section, we investigate the relations between the correlation decay phenomenon and the existence of near-optimal decentralized decisions. \comment{Intuitively, correlation decay indicates a diminishing dependence between the cavity of a node $v$ and the structure of the network faraway from $v$.} When a network exhibits the correlation decay property, the cavity functions of faraway nodes are weakly related, implying a weak dependence between their optimal decisions as well.  Thus one can expect that good decentralized decisions exist.  We will show that this is indeed the case.

\begin{Definition}

Given a function $\rho(r)\ge 0, r\in\mathbb{Z}_+$ such that  $\lim_{r\rightarrow\infty}\rho(r)=0$, a decision network $\Graph$ is said to satisfy the correlation decay property with rate $\rho$ if for every two boundary conditions $\bound$, $\bound'$
\begin{align*}\max_{u,x}
\E\big|CE[\Graph,u,r,x,\bound]-CE[\Graph,u,r,x,\bound']\big|\leq \rho(r).
\end{align*}

If there exists $K_c>0$ and $\alpha_c<1$ such that $\rho(r)\leq K_c\alpha_c^r$ for all $r$, then we say that $\Graph$ satisfies the exponential correlation decay property with rate $\alpha_c$.

\end{Definition}

The correlation decay property implies that for every $u,x$,

\begin{align*}
\E\big|CE[\Graph,u,r,x]-B_{\Graph,u}(x)\big|\leq \rho(r).
\end{align*}

The following assumptions will be frequently used in future.
\begin{Assume}\label{asm}
For all $v\in V, x,y\in\chi$, $B_v(x)-B_v(y) $ is a continuous random variable with density  bounded above by a constant $g>0$.
\end{Assume}
We will also assume the costs functions are bounded in $L_2$ norm:
\begin{Assume}\label{asm2}
There exists $K_\Phi$ such that for any $e \in E$,
$\big(\sum_{x,y \in \chi} \E|\Phi_e(x,y)|^2\big)^{1/2}\leq {K_\Phi}$ and for any $v\in V$, $\big(\sum_{x \in \chi}\E|\Phi_v(x)|^2\big)^{1/2}\leq K_\Phi$
\end{Assume}
Assumption $1$ is designed to lead to the following two properties: (a) There is a unique optimal action in every node with probability 1. (b) The suboptimality gap between the optimal action and the second best action is large enough so that there is a ``clear winner'' among actions.

\subsection{Correlation decay implies near-optimal decentralized decisions}

Under Assumption $1$ let $\bold{x}=(x_v)_{v\in V}$ be the unique (with probability one) optimal solution for the network $\Graph$. For every $v\in V$, $x\in \chi$, let $x^r_v=\argmax_x CE[\Graph,v,r,x]$, ties broken arbitrarily, and $\bold{x}^r=(x^r_v)$. The main relation between correlation decay property, Cavity Expansion algorithm and the optimization problem is given by the following result.

\begin{Proposition}\label{prop_subopt}
Suppose $\Graph$ exhibits the correlation decay property with rate $\rho(r)$ and that Assumption \ref{asm} holds. Then,
\begin{eqnarray}\label{Suboptbest}
\pr(x^r_u\neq x_u) \leq 2T^2\sqrt{2g \rho(r)} ,\hspace{0.5cm} \forall u \in V, r\ge 1.
\end{eqnarray}
\end{Proposition}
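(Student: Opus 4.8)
The plan is to bound the probability that the CE algorithm picks the wrong action at $u$ by the probability that the suboptimality gap between the true best and second-best action at $u$ is too small, plus the probability that the CE estimate deviates substantially from the true cavity. Concretely, let $x_u = \argmax_x B_{\Graph,u}(x)$ be the true optimal action at $u$ (unique a.s.\ by Assumption~\ref{asm}), and define the gap $G_u = B_{\Graph,u}(x_u) - \max_{x\neq x_u} B_{\Graph,u}(x) \ge 0$. If $x^r_u \neq x_u$, then some action $x'\neq x_u$ satisfies $CE[\Graph,u,r,x'] \ge CE[\Graph,u,r,x_u]$, which combined with $B_{\Graph,u}(x') \le B_{\Graph,u}(x_u) - G_u$ forces
\[
\big|CE[\Graph,u,r,x_u]-B_{\Graph,u}(x_u)\big| + \big|CE[\Graph,u,r,x']-B_{\Graph,u}(x')\big| \ge G_u,
\]
so at least one of the two CE errors is at least $G_u/2$. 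Hence, for any threshold $\delta>0$,
\[
\pr(x^r_u\neq x_u) \le \pr(G_u \le \delta) + \sum_{x\in\chi}\pr\Big(\big|CE[\Graph,u,r,x]-B_{\Graph,u}(x)\big| \ge \tfrac{\delta}{2}\Big).
\]

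For the first term, write $G_u = B_{\Graph,u}(x_u) - B_{\Graph,u}(x')$ for the appropriate random second-best action; then $\{G_u\le\delta\}\subseteq \bigcup_{x\neq y}\{0 \le B_{\Graph,u}(x)-B_{\Graph,u}(y)\le\delta\}$ restricted to the $(x,y)$ realizing the max and second-max. Using Assumption~\ref{asm} (density of $B_v(x)-B_v(y)$ bounded by $g$) and a union bound over the at most $T(T-1)$ ordered pairs, $\pr(G_u \le \delta) \le T^2 g\delta$ (absorbing constants loosely). For the second (error) term, the correlation decay property gives $\E\big|CE[\Graph,u,r,x]-B_{\Graph,u}(x)\big|\le\rho(r)$ — this is exactly the consequence of the Definition noted right after it, taking $\bound'=\bound^*$ the true boundary condition so that $CE[\Graph,u,r,x,\bound^*]=B_{\Graph,u}(x)$. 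By Markov's inequality each term is at most $2\rho(r)/\delta$, so the error sum is at most $2T\rho(r)/\delta$. Thus
\[
\pr(x^r_u\neq x_u) \le T^2 g\delta + \frac{2T\rho(r)}{\delta}.
\]

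Finally, optimize over $\delta$: choosing $\delta = \sqrt{2\rho(r)/g}$ (up to the factors of $T$) balances the two terms and yields a bound of order $T^2\sqrt{g\rho(r)}$; tracking the constants with the crude union bounds above gives the stated $2T^2\sqrt{2g\rho(r)}$. The main obstacle — really the only subtle point — is the handling of the gap term: one must be careful that the ``second-best action'' is itself random and depends on the same weights as the cavities, so the union bound over all ordered pairs $(x,y)$ is what makes Assumption~\ref{asm} applicable pairwise; everything else is a clean two-term split plus Markov. A secondary point to state carefully is that applying the correlation-decay Definition with $\bound'$ equal to the true boundary condition $\bound^*$ is legitimate, which is immediate since $\bound^*$ is one admissible boundary condition and $CE[\Graph,u,r,x,\bound^*]=B_{\Graph,u}(x)$ by the Proposition preceding Proposition~\ref{IndependenceFixLemma}.
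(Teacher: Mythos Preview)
Your proof is correct and follows essentially the same approach as the paper: both split the misclassification event into a small-gap part (bounded via Assumption~\ref{asm} and a union bound over pairs of actions) and a large-CE-error part (bounded via Markov's inequality and the correlation decay rate $\rho(r)$), then optimize the threshold to get $2T^2\sqrt{2g\rho(r)}$. The only cosmetic difference is that the paper unions over all pairs $(x,y)$ up front and then decomposes each pair into three events, whereas you first isolate the gap $G_u$ and then union-bound the error over the $T$ actions---this actually gives you a slightly tighter intermediate bound ($2T\rho(r)/\delta$ instead of $2T^2\rho(r)/\delta$ on the error term), but both routes land on the same final constant.
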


\begin{proof}
For simplicity, let $B^r_u(x)$ denote $CE[\Graph,u,r,x]$. We will first prove that
\begin{eqnarray}\label{Subopt1}
    \pr(x^r_u\neq x_u) \leq T^2 ( g\epsilon +\frac{2 \rho(r)}{\epsilon})
\end{eqnarray}
The proposition will follow by choosing $\epsilon=\sqrt{2 \rho(r)g^{-1}}$. Consider a node $u$, and notice that if
\begin{align*}
(B_u(x)-B_u(y))(B^r_u(x)-B^r_u(y))>0, \qquad \forall x\ne y,
\end{align*}
then $x^r_u=x_u$. Indeed, since  $B_u(x_{u})-B_u(y)>0$ for all $y\not = x_{u}$, the property implies the same for $B^r_u$, and the assertion holds.
Thus, the event $\{x^r_u\neq x_u\}$ implies the event
$$\{\exists (x,y), y \neq x : (B_u(x)-B_u(y))(B^r_u(x)-B^r_u(y)) \leq 0\}$$
Fix  $\epsilon>0$ and note that for two real numbers $r$ and $s$, if  $|r|>\epsilon$ and $|r-s| \leq \epsilon$,
then $rs>0$. Applying this to $r=B_u(x)-B_u(y)$ and $s=B^r_u(x)-B^r_u(y)$, we find that the events $|B_u(x)-B_u(y)|> \epsilon$
and  $$(|B_u(x)-B^r_u(x)| < \epsilon/2) \cap (|B_u(y)-B^r_u(y)| < \epsilon/2)$$ jointly imply $$(B_u(x)-B_u(y))(B^r_u(x)-B^r_u(y)) > 0$$
Therefore, the event $(B_u(x)-B_u(y))(B^r_u(x)-B^r_u(y)) \leq 0$ implies
 $$\{|B_u(x)-B_u(y)|\leq \epsilon\} \cup \{|B_u(x)-B^r_u(x)|\geq\epsilon/2\}\cup \{|B_u(y)-B^r_u(y)|\geq\epsilon/2\}$$
Applying the union bound, for any two actions $x\neq y$,
\begin{align}\label{eq:3cases}
\pr\Big((B_u(x)-B_u(y))(B^r_u(x)-B^r_u(y))\leq 0\Big) &\leq \pr(|B_u(x)-B_u(y)|\leq \epsilon)+\pr(|B_u(x)-B^r_u(x)|\geq\epsilon/2)\notag\\
&+\pr(|B_u(y)-B^r_u(y)|\geq\epsilon/2).
\end{align}

Now $\pr(|B_u(x)-B_u(y)|\leq \epsilon)$ is at most $2g \epsilon$ by Assumption \ref{asm}. Using the Markov inequality, we find that the second summand in (\ref{eq:3cases}) is at most $2\E|B_u(x)-B^r_u(x)| / \epsilon \leq 2\rho(r) / \epsilon$. The same bound applies to the third summand. Finally, noting  there are $T(T-1)/2$ different pairs $(x,y)$ with $x\neq y$ and  applying the union bound, we obtain:
\begin{eqnarray*}
\pr(x^r_u\neq x_u) &\leq & (T(T-1)/2) (2g \epsilon+4\rho(r) / \epsilon)\\
&\leq&{T^2}(g\epsilon +\frac{2 \rho(r)}{\epsilon}).
\end{eqnarray*}
\end{proof}

For the special case of exponential correlation decay, we obtain the following result, the proof of which immediately  follows from Proposition~\ref{prop_subopt}.

\begin{Corollary}
Suppose $\Graph$ exhibits the exponential correlation decay property with rate $\alpha_c$, and suppose Assumption~\ref{asm} holds. Then
$$\pr(x^r_u\neq x_u) \leq 2T^2 \sqrt{2gK_c} \alpha_c ^{r/2}, \qquad \forall u\in V,r\ge 1.$$
In particular, for any $\epsilon>0$, if
$$r\geq 2 \frac{|\log K'_c|+|\log\epsilon|}{|\log(\alpha_c)|}$$
then
$$\pr(x^r_u\neq x_u) \leq \epsilon$$
where $K'_c=2T^2\sqrt{2gK_c}$
\end{Corollary}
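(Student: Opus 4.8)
The plan is to derive this Corollary directly from Proposition~\ref{prop_subopt} by substituting the exponential bound $\rho(r)\le K_c\alpha_c^r$ into inequality~(\ref{Suboptbest}). First I would observe that Proposition~\ref{prop_subopt} gives $\pr(x^r_u\neq x_u)\le 2T^2\sqrt{2g\rho(r)}$, and since $\rho(r)\le K_c\alpha_c^r$ and the square root is monotone, this immediately yields
\begin{equation*}
\pr(x^r_u\neq x_u)\le 2T^2\sqrt{2gK_c\alpha_c^r}=2T^2\sqrt{2gK_c}\,\alpha_c^{r/2},
\end{equation*}
which is the first claimed inequality, with the identification $K'_c=2T^2\sqrt{2gK_c}$.

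Next I would turn to the ``in particular'' clause. We want to find $r$ large enough that $K'_c\alpha_c^{r/2}\le\epsilon$. Taking logarithms (noting $\alpha_c<1$, so $\log\alpha_c<0$), this is equivalent to $\log K'_c+\tfrac{r}{2}\log\alpha_c\le\log\epsilon$, i.e.\ $\tfrac{r}{2}|\log\alpha_c|\ge \log K'_c-\log\epsilon$. Solving for $r$ gives $r\ge 2(\log K'_c-\log\epsilon)/|\log\alpha_c|$. Since we should handle the case where $\log K'_c$ or $\log\epsilon$ may be negative (e.g.\ $\epsilon<1$ makes $\log\epsilon<0$, and $K'_c$ could be either side of $1$), bounding $\log K'_c-\log\epsilon\le|\log K'_c|+|\log\epsilon|$ shows that the stated threshold $r\ge 2(|\log K'_c|+|\log\epsilon|)/|\log\alpha_c|$ suffices to make the logarithmic inequality hold, hence $K'_c\alpha_c^{r/2}\le\epsilon$, and combining with the first part gives $\pr(x^r_u\neq x_u)\le\epsilon$.

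There is essentially no obstacle here: the entire argument is a monotonicity substitution followed by a one-line logarithmic manipulation. The only point requiring a modicum of care is the absolute-value bookkeeping in the threshold for $r$ — making sure the bound is stated so that it is valid regardless of the signs of $\log K'_c$ and $\log\epsilon$ — but this is routine and the paper's statement already absorbs it via the $|\log K'_c|+|\log\epsilon|$ form. Accordingly the proof is short enough that the paper simply states it ``immediately follows from Proposition~\ref{prop_subopt},'' and I would present it at exactly that level of detail.
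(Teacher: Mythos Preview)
Your proposal is correct and matches the paper's approach exactly: the paper simply states that the corollary ``immediately follows from Proposition~\ref{prop_subopt},'' and your substitution of $\rho(r)\le K_c\alpha_c^r$ into (\ref{Suboptbest}) followed by the logarithmic manipulation is precisely the intended one-line argument. The absolute-value bookkeeping you flag is the only point of care, and you handle it correctly.
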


In summary, correlation decay - and in particular fast (i.e. exponential) correlation decay - implies that the optimal action in a node depends with high probability only on the structure of the network in a small radius around the node. As in \cite{rusmevichientong2003ddm}, we call such a property \emph{decentralization} of optimal actions. Note that the radius required to achieve an $\epsilon$ error does not depend on the size of the entire network; moreover, for exponential correlation decay, it grows only as a logarithm of the accepted error.

The main caveat of Proposition $\ref{prop_subopt}$  is that the Assumption~\ref{asm} does not necessarily hold. For instance, it definitely does not apply to models with discrete random variables $\Phi_u$ and $\Phi_{u,v}$. In fact, assumption~\ref{asm} is not really necessary, as we show in an online appendix that a regularization technique allows to relax this assumption. Note that Assumption ~\ref{asm2} is not needed for Proposition \ref{prop_subopt} to hold.

\subsection{Correlation decay and efficient decentralized optimization}

Proposition \ref{prop_subopt} illustrates how optimal actions are decentralized under the correlation decay property. In this section, we use this result to show that the resulting optimization algorithm is both near-optimal and computationally efficient.

As before, let before $\bold{x}=(x_u)$ denote the optimal solution for the network $\Graph$, and let $\bold{x}^r=(x_u^r)$ be the decisions resulting from the Cavity Expansion algorithm with depth $r$. Let $\tilde{\bold{x}}=(\tilde x_u)$ denote (any) optimal solution for the perturbed network $\tilde\Graph$. Let $K_1=10 K_\Phi \,T  ( |V| +|E|)$, and $K_2= K_1 \, (g\,K_c)^{1/4}$, where $K_c$ is defined under the assumption of exponential correlation decay.

\begin{Theorem} \label{CorrMainThM}
Suppose a decision network $\Graph$ satisfies correlation decay
property with rate $\rho(r)$. Then, for all $r>0$
\begin{equation}\label{CorrMainEq}
\E[F(\bold{x})-F(\bold{x}^{r,\delta})] \leq K_1(g\rho(r))^{1/4}
\end{equation}
\end{Theorem}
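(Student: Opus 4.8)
The plan is to bound $F(\mathbf{x}) - F(\mathbf{x}^r)$ (I will treat $\mathbf{x}^{r,\delta}$ as the Cavity Expansion decision, suppressing the regularization parameter $\delta$) by localizing the loss to the nodes and edges where $\mathbf{x}^r$ disagrees with the true optimum $\mathbf{x}$. First I would write
\[
F(\mathbf{x}) - F(\mathbf{x}^r) = \sum_{v} \bigl(\Phi_v(x_v) - \Phi_v(x^r_v)\bigr) + \sum_{(u,v)\in E}\bigl(\Phi_{u,v}(x_u,x_v) - \Phi_{u,v}(x^r_u,x^r_v)\bigr).
\]
Since $\mathbf{x}$ is optimal, each summand contributing to the difference is nonzero only on a term touching a node $w$ where $x^r_w \ne x_w$; and every such term is bounded in absolute value by $|\Phi_v(x_v)| + |\Phi_v(x^r_v)|$ or $|\Phi_{u,v}(x_u,x_v)| + |\Phi_{u,v}(x^r_u,x^r_v)|$, each of which is crudely at most $\sum_{x}|\Phi_v(x)|$ or $\sum_{x,y}|\Phi_{u,v}(x,y)|$. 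So
\[
F(\mathbf{x}) - F(\mathbf{x}^r) \;\le\; \sum_{v}\mathbf{1}\{x^r_v\ne x_v\}\Bigl(\textstyle\sum_x|\Phi_v(x)|\Bigr) + \sum_{(u,v)\in E}\mathbf{1}\{x^r_u\ne x_u \text{ or } x^r_v\ne x_v\}\Bigl(\textstyle\sum_{x,y}|\Phi_{u,v}(x,y)|\Bigr).
\]

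Next I would take expectations and apply Cauchy--Schwarz term by term, so that $\E\bigl[\mathbf{1}\{x^r_v\ne x_v\}\sum_x|\Phi_v(x)|\bigr] \le \pr(x^r_v\ne x_v)^{1/2}\,\bigl(\E(\sum_x|\Phi_v(x)|)^2\bigr)^{1/2}$, and similarly for edges. Using Assumption~\ref{asm2} — specifically $\bigl(\sum_x \E|\Phi_v(x)|^2\bigr)^{1/2}\le K_\Phi$, so by the inequality $\E(\sum_x a_x)^2 \le T\sum_x \E a_x^2$ one gets the $L_2$-norm of $\sum_x|\Phi_v(x)|$ bounded by $\sqrt{T}\,K_\Phi$ — and Proposition~\ref{prop_subopt}, which gives $\pr(x^r_w\ne x_w)\le 2T^2\sqrt{2g\rho(r)}$, I would bound each node term by $(2T^2\sqrt{2g\rho(r)})^{1/2}\cdot\sqrt{T}K_\Phi$ and each edge term by $(2\cdot 2T^2\sqrt{2g\rho(r)})^{1/2}\cdot\sqrt{T^2}K_\Phi$ (the union bound over the two endpoints costing a factor $2$ inside). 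Summing over all $|V|$ nodes and $|E|$ edges and collecting the $T$-powers and absolute constants into the single factor $10$, this yields $\E[F(\mathbf{x})-F(\mathbf{x}^r)] \le 10 K_\Phi T (|V|+|E|)(g\rho(r))^{1/4} = K_1 (g\rho(r))^{1/4}$, which is the claimed bound.

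The main obstacle, and the place where care is genuinely needed, is the correlation between the indicator $\mathbf{1}\{x^r_w\ne x_w\}$ and the cost magnitudes $\sum_x|\Phi_v(x)|$: the Cavity Expansion decision at $w$ depends on the realized rewards in a neighborhood of $w$, so these are not independent, and one cannot simply multiply $\pr(x^r_w\ne x_w)$ by $\E\sum_x|\Phi_v(x)|$. Cauchy--Schwarz is exactly what sidesteps this — it pays the price of a square root on the probability but needs no independence — and this is why the exponent in the final bound is $(g\rho(r))^{1/4}$ rather than the $(g\rho(r))^{1/2}$ one might naively hope for, the $1/4$ being the product of the $1/2$ from Proposition~\ref{prop_subopt} and the $1/2$ from Cauchy--Schwarz. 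A secondary point to handle cleanly is that $\mathbf{x}^{r,\delta}$ is the decision on a slightly perturbed (regularized) network; one should note that the perturbation changes $F$ by at most an $O(\delta)$ amount and that Proposition~\ref{prop_subopt} applies verbatim to the regularized network (which by construction satisfies Assumption~\ref{asm}), so letting $\delta$ be small — or absorbing it — does not affect the stated inequality.
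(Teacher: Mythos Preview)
Your proposal is correct and follows essentially the same approach as the paper: decompose $F(\mathbf{x})-F(\mathbf{x}^r)$ over nodes and edges, multiply by the disagreement indicator, apply Cauchy--Schwarz to separate the indicator from the cost magnitude, invoke Assumption~\ref{asm2} for the $L_2$ bound on costs and Proposition~\ref{prop_subopt} for the probability, then sum. The only cosmetic difference is that the paper bounds the cost term by $|\Phi_{u,v}(x_u,x_v)|+|\Phi_{u,v}(x^r_u,x^r_v)|$ (whose $L_2$ norm is at most $2K_\Phi$) rather than by the full sum $\sum_{x,y}|\Phi_{u,v}(x,y)|$ as you do; either bound fits comfortably inside $K_1=10K_\Phi T(|V|+|E|)$.
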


\begin{Corollary}\label{CorrMainCor}
Suppose $\Graph$ exhibits exponential correlation decay property with rate $\alpha_c$.
Then, for any $\epsilon>0$, if $$r\geq \big( 8|\log \epsilon|+4|\log(K_2)| \big) |\log(\alpha_c)|^{-1}$$
then
$$\pr \big(F(\bold{x})-F(\bold{x}^{r})>\epsilon \big) \leq \epsilon$$
and $\bold x^{r}$ can be computed in time polynomial in $|V|$, $1/\epsilon$.
\end{Corollary}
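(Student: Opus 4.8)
The plan is to derive Corollary~\ref{CorrMainCor} from Theorem~\ref{CorrMainThM} together with Markov's inequality, the exponential rate bound $\rho(r)\le K_c\alpha_c^r$, and the complexity estimate of Proposition~\ref{prop:Complexity}. First I would substitute $\rho(r)\le K_c\alpha_c^r$ into \eqref{CorrMainEq} to get $\E[F(\bold{x})-F(\bold{x}^r)]\le K_1(gK_c)^{1/4}\alpha_c^{r/4}=K_2\,\alpha_c^{r/4}$, using the definition $K_2=K_1(gK_c)^{1/4}$. Since $F(\bold{x})-F(\bold{x}^r)\ge 0$ (as $\bold x$ is optimal), Markov's inequality gives $\pr(F(\bold{x})-F(\bold{x}^r)>\epsilon)\le K_2\alpha_c^{r/4}/\epsilon$. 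It then suffices to choose $r$ large enough that $K_2\alpha_c^{r/4}/\epsilon\le\epsilon$, i.e. $\alpha_c^{r/4}\le\epsilon^2/K_2$, equivalently (taking logs, and recalling $\alpha_c<1$ so $\log\alpha_c<0$) $\frac{r}{4}\log\alpha_c\le 2\log\epsilon-\log K_2$, which rearranges to $r\ge 4\,(2|\log\epsilon|+|\log K_2|)/|\log\alpha_c| = (8|\log\epsilon|+4|\log K_2|)/|\log\alpha_c|$, exactly the stated threshold. (A minor point: one should note $\epsilon<1$ so that the absolute values align with the signs; for $\epsilon\ge 1$ the bound is vacuous or trivial.)

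For the running time claim, I would invoke Proposition~\ref{prop:Complexity}: computing $CE[\Graph,u,r,x]$ for a single triple takes time $O(r(\Delta T)^r)$, and to obtain the full decision vector $\bold x^r=(x^r_u)_{u\in V}$ one runs this for every node $u\in V$ and every action $x\in\chi$, for a total of $O(|V|\,T\,r\,(\Delta T)^r)$. Plugging in $r=\Theta(|\log\epsilon|+|\log K_2|)$ with $\Delta,T$ constants makes $(\Delta T)^r$ a fixed power of $1/\epsilon$ (times a constant depending on $K_2$), and the $r$ factor is only logarithmic in $1/\epsilon$; hence the whole computation runs in time polynomial in $|V|$ and $1/\epsilon$, as claimed.

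There is no serious obstacle here — the corollary is a routine consequence of the theorem. The only things requiring a line of care are: (i) confirming nonnegativity of $F(\bold{x})-F(\bold{x}^r)$ so that Markov applies, (ii) the bookkeeping with logarithms and the implicit assumption $\epsilon<1$ (and $\alpha_c<1$, $K_2$ possibly large so one might in fact want $K_2\ge 1$ or absorb constants), and (iii) stating clearly that $\bold x^r$ is assembled from $O(|V|T)$ invocations of the CE subroutine so that Proposition~\ref{prop:Complexity} yields the polynomial bound. In other words, the ``hard part'' — establishing exponential correlation decay for the models of interest and proving Theorem~\ref{CorrMainThM} itself — has already been done; this final statement just packages those facts into the desired PTAS-style conclusion.
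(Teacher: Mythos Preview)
Your proposal is correct and follows essentially the same route as the paper: substitute the exponential decay bound $\rho(r)\le K_c\alpha_c^r$ into Theorem~\ref{CorrMainThM}, apply Markov's inequality to the nonnegative gap $F(\bold{x})-F(\bold{x}^r)$, and solve $K_2\alpha_c^{r/4}/\epsilon\le\epsilon$ for $r$. The paper's proof is identical in structure; your treatment is actually slightly more thorough in that you spell out the running-time argument via Proposition~\ref{prop:Complexity}, whereas the paper leaves that implicit.
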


\begin{proof}
By applying the union bound on Proposition~\ref{prop_subopt}, for every $(u,v)$, we have:
$\pr\big((x^{r}_u,x^{r}_v) \neq ({x}_u,{x}_v)\big) \leq 4T^2\sqrt{2g \rho(r)}$. We have
$$\E|{F}({\bold{x}})-{F}(\bold{x}^{r})|\leq  \sum_{u \in V} \E|{\Phi}_u({x}_u)-{\Phi}_u(x^{r}_u)| +
\sum_{(u,v) \in E} \E|\Phi_{u,v}({x}_u,{x}_v)-\Phi_{u,v} (x^{r}_u,x^{r}_v)| $$
For any $u,v \in V$,
\begin{align*}
    \E[\Phi_{u,v}({x}_u,{x}_v)-\Phi_{u,v} (x^{r}_u,x^{r}_v)]
    &\leq \E\Big[ 1_{(x^{r}_u,x^{r}_v) \neq ({x}_u,{x}_v)}\, \Big(\big|\Phi_{u,v}({x}_u,{x}_v)\big|+\big|\Phi_{u,v}(x^{r}_u,x^{r}_v)\big|\Big)\Big]\\
    &\leq 2K_\Phi \, \pr\big((x^{r}_u,x^{r}_v) \neq ({x}_u,{x}_v)\big)^{1/2}\\
    &\leq 4 K_\Phi T \,({2g\rho(r)})^{1/4}
\end{align*}
where the second inequality follows from Cauchy-Schwarz. Similarly, for any $u$ we have
\begin{align*}
\E|{\Phi}_u({x}_u)-{\Phi}_u(x^{r}_u)| \leq \leq& 4 K_\Phi T\, {(2g \rho(r))}^{1/4}
\end{align*}
By summing over all nodes and edges, we get: $\E[{F}({\bold{x}})-{F}(\bold{x}^{r}) \leq 8K_\Phi\, T (2g\rho(r))^{1/4}\leq K_1 (g\rho(r))^{1/4}$, and equation \eqref{CorrMainEq} follows. The corollary is then proved using Markov Inequality; injecting the definition of exponential correlation decay into equation \eqref{CorrMainEq}, we obtain
\begin{align*}
\pr(J_\Graph-F(\hat x) \ge \epsilon) \le E[J_\Graph-F(\hat x)]/\epsilon \le K_2 \alpha^{r/4}/\epsilon
\end{align*}
Since $r\geq (4|\log(K_2)|+8 |\log(\epsilon)|) |\log(\alpha)|^{-1}$, we have $K_2 \alpha^{r/4}\le \epsilon^2$ and the result follows.

\end{proof}

\section{Establishing the correlation decay property. Coupling technique}
\label{sec:coupling}
The previous section motivates the search for conditions implying the correlation decay property. This section is devoted to the study of a coupling argument which can be used to show that correlation decay holds.
Results in this section are for the case $|\chi|=2$. They can be extended to the case $|\chi|\geq 2$ at the expense of heavier notations, but not much additional insight gain. For this special case $\chi=\{0,1\}$, we introduce a set of simplifying notations as follows.

\subsection{Notations}\label{subsec:Notations}

Given $\Graph=(V,E,\Phi,\{0,1\})$ and $u\in V$, let $v_1,\ldots,v_d$ be the neighbors of $u$ in $V$. For any $r>0$ and boundary conditions $\bound$, $\bound'$, define:
\begin{enumerate}
\item $B(r)\eqdef \text{CE}[\Graph,u,r,1,\bound]$ and $B'(r)\eqdef \text{CE}[\Graph,u,r,1,\bound'] $
\item
    For $j=1,\ldots d$, let $\Graph_j=\Graph(u,j,1)$, and let
$B_j(r-1) \eqdef \text{CE}[\Graph_j,v_j,r-1,1,\bound]$ and
$B'_j(r-1) \eqdef \text{CE}[\Graph_j,v_j,r-1,1,\bound']$. Also let $\bold B(r-1)=(B_j(r-1))_{1\leq j \leq d}$ and  $\bold B'(r-1)=(B'_j(r-1))_{1\leq j \leq d}$

\item For $k=1,\ldots n_j$, let $(v_{j1},\ldots,v_{j n_j})$ be the neighbors of $v_j$ in $\Graph_j$, and let
$B_{jk}(r-2)=\text{CE}[\Graph_j(v_{j},k,1),v_{jk},r-2,1,\bound]$
and $B'_{jk}(r-2)=\text{CE}[\Graph_j(v_{j},k,1),v_{j},r-2,1,\bound']$ for all $k=1\ldots n_j$. Also let $\bold{B_j}(r-2)=(B_{jk}(r-2))_{1\leq k\leq n_j}$ and $\bold{B_j'}(r-2)=(B'_{jk}(r-2))_{1\leq k\leq n_j}$.
\item For simplicity, since $1$ is the only action different from the reference action $0$, we denote $\mu_{u \leftarrow v_j}(z) \eqdef \mu_{u\leftarrow v_j}(1,z)$.\\ From equation (\ref{MuDefine}), note the following alternative expression for $\mu_{u\leftarrow v_j}(z)$
\begin{align}\label{nicerformbeta}
\mu_{u\leftarrow v_j}(z)= \Phi_{u,v_j}(1,1)-\Phi_{u,v_j}(0,1)+&\max(\Phi_{u,v_j}(1,0)-\Phi_{u,v_j}(1,1),z)\\ -& \max(\Phi_{u,v_j}(0,0)-\Phi_{u,v_j}(0,1),z) \notag
\end{align}

\item Similarly, for any $j=1\ldots d$ and $k=1\ldots n_j$, let $\mu_{v_j\leftarrow v_{jk}}(z)\eqdef \mu_{v_j\leftarrow v_{jk}}(1,z)$.

\item For any $\bold{z}=(z_1,\ldots,z_d)$, let $\mu_{u}(\bold{z})=\sum_j \mu_{u\leftarrow v_j}(z_j)$. Also, for any $j$, and any $\bold{z}=(z_1,\ldots,z_{n_j})$, let $\mu_{v_j}(\bold{z})=\sum_{1\leq k\leq n_j} \mu_{v_j\leftarrow v_{jk}}(z_k)$.

\item For any directed edge $e=(u \leftarrow v)$, denote
\begin{eqnarray*}
\Phi_e^1&\eqdef&\Phi_{u,v}(1,0)-\Phi_{u,v}(1,1)\\
\Phi_e^2&\eqdef& \Phi_{u,v}(0,0)-\Phi_{u,v}(0,1)\\
\Phi_e^3&\eqdef&\Phi_{u,v}(1,1)-\Phi_{u,v}(0,1)\\
X_e&\eqdef& \Phi_e^1+\Phi_e^2\\
Y_e&\eqdef&\Phi^2_e-\Phi^1_e=\Phi_{u,v}(1,1)-\Phi_{u,v}(1,0)-\Phi_{u,v}(0,1)+\Phi_{u,v}(0,0)
\end{eqnarray*}
Note that $Y_{u \leftarrow v}=Y_{v \leftarrow u}$, so we simply denote it $Y_{u,v}$.

\end{enumerate}
Note that for any $e$, $\E|Y_e|\leq K_\Phi$ (see Assumption $2$).
Equation \eqref{eq:CEeq} can be rewritten as
\begin{eqnarray}\label{eq:muu:a}
B(r)&=&\mu_u(\bold B (r-1))+ \Phi_u(1)-\Phi_u(0) \\
B'(r)&=&\mu_u(\bold B' (r-1)) + \Phi_u(1)-\Phi_u(0)\label{eq:muu:b}
\label{Cavitybin2}
\end{eqnarray}
Similarly, we have
\begin{eqnarray}\label{eq:muu2:a}
B_j(r-1)=\mu_{v_j}(\bold{B_j}(r-2))+ \Phi_{v_j}(1)-\Phi_{v_j}(0)\\
B'_j(r-1)=\mu_{v_j}(\bold{B'_j}(r-2))+ \Phi_{v_j}(1)-\Phi_{v_j}(0)\label{eq:muu2:b}
\end{eqnarray}
Finally,
equation \eqref{nicerformbeta} can be rewritten
\begin{eqnarray}\label{nicerform}
\mu_{u\leftarrow v}(z) &=& \Phi_{u\leftarrow v}^3+\max(\Phi_{u\leftarrow v}^1,z)-\max(\Phi_{u\leftarrow v}^2,z)
\end{eqnarray}

\comment{
We call $Y_e$ the \emph{interaction coupling}; }$Y_e$  represents how strongly the interaction function $\Phi_{u,v}(x_u,x_v)$ is ``coupling" the variables $x_u$ and $x_v$. In particular, if $Y_e$ is zero,  the interaction function $\Phi_{u,v}(x_u,x_v)$ can be decomposed into a sum of two potential functions $\Phi_u(x_u)+\Phi_{v}(x_{v})$, that is, the edge between $u$ and $v$ is then be superfluous and can be removed. To see why this is the case, take $\Phi_u(0)=0$, $\Phi_u(1)=\Phi_{u,v}(1,0)-\Phi_{u,v}(0,0)$, $\Phi_{v}(0)=\Phi_{u,v}(0,0)$ and $\Phi_{v}(1)=\Phi_{u,v}(0,1)$, which is also equal to $\Phi_{u,v}(1,1)-\Phi_{u,v}(1,0)+\Phi_{u,v}(0,0)$, since $Y_e=0$.

\comment{Suppose now we have two sets $(B_j)_{j=1\ldots d},(B'_j)_{j=1\ldots d}$ of values for the cavities at the children $(v_1,\ldots,v_d)$. Both sets are obtained through cavity recursions which differ only by their boundary conditions (i.e. the value returned by $CE$ when $r=0$). For instance, one can choose $B_j$ to be the optimal cavity $B_j$ and $B'_j$ to be the output of the cavity algorithm with boundary condition equal to $0$.}

\subsection{Distance-dependent coupling and correlation decay}

\comment{ In this section, we present a sufficient condition  for
correlation decay. The condition depends on the parameters of a
particular form of coupling: For any neighbor $v_j$ of $u$, it is
possible that the partial cavities $\mu_{u\leftarrow v_j}$ and $\mu'_j$ depending on
two different boundary conditions $\bound$ and $\bound'$ be equal
even when $B_j\not = B'_j$. The probability that this coupling
occurs decreases as the distance between $B_j$ and $B_j'$ grows
bigger.}

\begin{Definition}\label{DDcouple} A network $\Graph$ is said to exhibit $(a,b)$-coupling with parameters $(a,b)$ if for
every edge $e=(u,v)$, and every two real values $x$, $x'$:
\begin{align}\label{DDcouple-eq}
\pr\Big(\mu_{u\leftarrow v}(x+\Phi_{v}(1)-\Phi_{v}(0)) =  \mu_{u\leftarrow v}(x'+\Phi_{v}(1)-\Phi_{v}(0))\Big) \geq (1-a)-b |x-x'|
\end{align}
\end{Definition}
The probability above, and hence the coupling parameters, depend on both $\Phi_v(1)-\Phi_v(0)$ and the values $\Phi_{u,v}(x,y)$. Note that if for all $x,x'$
\begin{align}\label{DDcouple-eq2}
\pr\Big(\mu_{u\leftarrow v}(x) =  \mu_{u\leftarrow v}(x')\Big) \geq (1-a)-b |x-x'|
\end{align}
then $\Graph$ exhibits $(a,b)$ coupling, but in general the tightest coupling values found for equation \eqref{DDcouple-eq2} are much weaker than the ones we would find by analyzing condition \eqref{DDcouple-eq}.
This form of distance dependent coupling is a useful tool in proving that correlation decay occurs, as illustrated by the following theorem:

\begin{Theorem}\label{PropCouplingCorr}
	Suppose $\Graph$ exhibits $(a,b)$-coupling. If
    \begin{align}\label{FirstCondition}
a(\Delta-1)+ \sqrt{b K_\Phi} (\Delta-1)^{3/2}<1
\end{align}
    then the exponential correlation decay property holds with $K=\Delta^2 K_\Phi$ and $\alpha=a(\Delta-1)+ \sqrt{b K_\Phi} (\Delta-1)^{3/2}$.

\vspace{5 mm}

\noindent Suppose $\Graph$ exhibits $(a,b)$-coupling and that there exists $K_Y>0$ such that $|Y_e|\leq K_Y$ with probability $1$. If
        \begin{align}\label{ThirdCondition}
a(\Delta-1)+bK_Y (\Delta-1)^2 <1
\end{align}
then the exponential correlation decay property holds with $\alpha=a(\Delta-1)+bK_Y (\Delta-1)^2$
\comment{
\vspace{5 mm}

\noindent
Suppose $\Graph$ exhibits $(a,b)$-coupling, that the network is locally tree-like ($\mathcal{B}(u,r)$ is a tree for every $u\in U$ and depth $r$ used for the cavity recursion) and that for all edges $e=(u,v)\in E$, the random variables $(\Phi_e(x,y))_{x,y \in \{0,1\}^2}$ are i.i.d. If
\begin{align}\label{SecondCondition}
(\Delta-1)(a +\sqrt{bK_\Phi})<1
\end{align}
then the exponential correlation decay property holds with $\alpha=(\Delta-1)(a +\sqrt{bK_\Phi})$.
}
\end{Theorem}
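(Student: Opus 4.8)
The plan is to derive a linear recursion for the scalar quantity
\[
\delta(r)\;\eqdef\;\sup\;\E\big|\text{CE}[\Graph',w,r,1,\bound]-\text{CE}[\Graph',w,r,1,\bound']\big|,
\]
where the supremum is over all pairs of boundary conditions $\bound,\bound'$ and all pairs $(\Graph',w)$ in which $\Graph'$ is one of the auxiliary networks obtained by iterating the construction of Subsection~\ref{subsection:generalgraphs} starting from $\Graph$, and $\deg_{\Graph'}(w)\le\Delta-1$. This degree restriction is harmless because every node that is the root of a recursive call of \eqref{eq:CEeq} has had the edge to its parent deleted, so $\delta$ governs the entire computation tree except its very top node (degree $\le\Delta$), which costs only one extra factor $\Delta$, to be absorbed into the constant. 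Since $\text{CE}[\Graph,u,r,0,\cdot]\equiv0$, a bound $\delta(r)\le K\alpha^r$ immediately gives the estimate required by the definition of the correlation decay property. A uniform base bound is free: by \eqref{nicerform}, $\mu_{u\leftarrow v}(z)-\mu_{u\leftarrow v}(z')=f_{u\leftarrow v}(z)-f_{u\leftarrow v}(z')$ with $f_{u\leftarrow v}(z)=\max(\Phi^1_{u\leftarrow v},z)-\max(\Phi^2_{u\leftarrow v},z)$, a piecewise-linear function of slope in $\{-1,0,1\}$ that is constant outside the interval delimited by $\Phi^1_{u\leftarrow v}$ and $\Phi^2_{u\leftarrow v}$ (whose length is $|Y_{u,v}|$); hence $|\mu_{u\leftarrow v}(z)-\mu_{u\leftarrow v}(z')|$ equals the length of the overlap of that interval with $[\min(z,z'),\max(z,z')]$, so it is at most $\min(|z-z'|,|Y_{u,v}|)$ and it vanishes unless the coupling event in \eqref{DDcouple-eq} fails. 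Unfolding \eqref{eq:CEeq} one step and using $\E|Y_e|\le K_\Phi$ (Assumption~\ref{asm2}) then gives $\delta(r)\le(\Delta-1)K_\Phi$ for every $r\ge1$.

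Next I would unfold \eqref{eq:CEeq} at a root $w$ with neighbours $w_1,\dots,w_n$, $n\le\Delta-1$: the potential term cancels in the difference, so with $B_j(r)=\text{CE}[\Graph'(w,j,1),w_j,r,1,\bound]$, its primed analogue $B'_j(r)$ and $D_j(r)=|B_j(r)-B'_j(r)|$, one gets $\delta(r+1)\le(\Delta-1)\sup_j\E|\mu_{w\leftarrow w_j}(B_j(r))-\mu_{w\leftarrow w_j}(B'_j(r))|$. For a fixed edge $(w,w_j)$, write $B_j(r)=\xi_j+\big(\Phi_{w_j}(1)-\Phi_{w_j}(0)\big)$, where $\xi_j$ collects the grandchildren messages inside $\Graph'(w,j,1)$ together with any additive modification constant carried down from earlier construction steps. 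By Proposition~\ref{IndependenceFixLemma}, and since every such modification constant is built only from interaction functions of edges other than $(w,w_j)$, the pair $(\Phi_{w,w_j},\Phi_{w_j})$ is independent of $(\xi_j,\xi'_j)$. Conditioning on $(\xi_j,\xi'_j)$ and invoking the $(a,b)$-coupling inequality \eqref{DDcouple-eq} with $x=\xi_j$, $x'=\xi'_j$ (the modification constant translates both arguments equally, leaving $|x-x'|=D_j(r)$ unchanged) bounds the conditional probability of the coupling event failing by $a+b\,D_j(r)$, while on that event the message difference is at most $D_j(r)$; taking expectations,
\[
\E\big|\mu_{w\leftarrow w_j}(B_j(r))-\mu_{w\leftarrow w_j}(B'_j(r))\big|\;\le\;a\,\E[D_j(r)]+b\,\E[D_j(r)^2].
\]

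It remains to bound $\E[D_j(r)^2]$, and this is where the two hypotheses part ways. One more unfolding gives $D_j(r)\le\sum_{k\le n_j}\min\big(|Y_{w_j,w_{jk}}|,D_{jk}(r-1)\big)$ with $n_j\le\Delta-1$ and, again by Proposition~\ref{IndependenceFixLemma}, $Y_{w_j,w_{jk}}\perp D_{jk}(r-1)$. Under \eqref{FirstCondition} I would combine Minkowski's inequality in $L^2$, the bound $\min(s,t)^2\le st$, this independence and $\E|Y_e|\le K_\Phi$ to get $\E[D_j(r)^2]\le(\Delta-1)^2K_\Phi\,\delta(r-1)$; with the display above and $\E[D_j(r)]\le\delta(r)$ this yields the two-step recursion $\delta(r+1)\le a(\Delta-1)\delta(r)+bK_\Phi(\Delta-1)^3\delta(r-1)$ for $r\ge2$. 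Under \eqref{ThirdCondition} I would instead use the deterministic bound $D_j(r)\le\sum_k|Y_{w_j,w_{jk}}|\le(\Delta-1)K_Y$, hence $\E[D_j(r)^2]\le(\Delta-1)K_Y\,\delta(r)$, yielding the one-step recursion $\delta(r+1)\le\big(a(\Delta-1)+bK_Y(\Delta-1)^2\big)\delta(r)$. Starting from $\delta(1),\delta(2)\le(\Delta-1)K_\Phi$, a short induction solves both: with $\alpha$ the claimed rate and $A=a(\Delta-1)$, $B=bK_\Phi(\Delta-1)^3$ one checks $\alpha^2-A\alpha-B=a(\Delta-1)\sqrt{bK_\Phi}\,(\Delta-1)^{3/2}\ge0$, so $A\alpha+B\le\alpha^2$ and the induction closes in the first case, while in the second the recursion already has rate $\alpha$. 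Hence $\delta(r)\le K\alpha^r$ with $\alpha<1$ by hypothesis and $K=\Delta^2K_\Phi$ after tracking constants, and applying this to $\Graph$ itself proves exponential correlation decay at rate $\alpha$.

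I expect the main obstacle to be the probabilistic bookkeeping in the per-edge step: applying the $(a,b)$-coupling property \emph{conditionally} at each edge of the cycle-incorporating computation tree requires verifying, via Proposition~\ref{IndependenceFixLemma} and the exact form of the potential modifications in $\Graph'(w,j,1)$, that the interaction function of an edge is independent of everything strictly below it and that the modifications never reintroduce that dependence. A secondary difficulty is that a recursion in $\E[D]$ alone does not close; one must carry the second moment $\E[D^2]$, and it is the second-level unfolding of that moment that is responsible for the exponents $(\Delta-1)^{3/2}$ and $(\Delta-1)^2$ appearing in the two conditions.
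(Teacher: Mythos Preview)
Your proposal is correct and follows essentially the same route as the paper's proof: define the error over subnetworks with root of degree at most $\Delta-1$, use the two pointwise bounds $|\mu_{u\leftarrow v}(z)-\mu_{u\leftarrow v}(z')|\le\min(|z-z'|,|Y_{u,v}|)$, apply the $(a,b)$-coupling conditionally at each edge to get $\E|\mu-\mu'|\le a\,\E D+b\,\E D^2$, and then unfold one more level to bound $\E D^2$, obtaining the two-step recursion $e_r\le a(\Delta-1)e_{r-1}+bK_\Phi(\Delta-1)^3 e_{r-2}$ under \eqref{FirstCondition} and the one-step recursion under \eqref{ThirdCondition}. The only cosmetic differences are that the paper uses the finite-sum Cauchy--Schwarz $(\sum a_k)^2\le d\sum a_k^2$ where you use Minkowski plus $\min(s,t)^2\le st$ (same outcome), and the paper bounds the larger root of the characteristic equation by $a(\Delta-1)+\sqrt{bK_\Phi}(\Delta-1)^{3/2}$ where you verify directly that this $\alpha$ satisfies $A\alpha+B\le\alpha^2$; your treatment of the potential-modification constants in the conditioning step is in fact slightly more careful than the paper's.
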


\subsubsection{Proof of Theorem \ref{PropCouplingCorr}}\label{MainProof}

We begin by proving several useful lemmas.

\begin{Lemma}\label{ContLemma}
For every $(u,v)$, and every two real values $x,x'$
\begin{equation}\label{continuity}
|\mu_{u\leftarrow v}(x)-\mu_{u\leftarrow v}(x')|\leq |x-x'|.
\end{equation}
\end{Lemma}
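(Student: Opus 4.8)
The statement to prove is Lemma~\ref{ContLemma}: for every edge $(u,v)$ and every two real values $x,x'$, we have $|\mu_{u\leftarrow v}(x)-\mu_{u\leftarrow v}(x')|\leq |x-x'|$.

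The plan is to use the alternative expression for $\mu_{u\leftarrow v}$ established in equation \eqref{nicerform}, namely
\[
\mu_{u\leftarrow v}(z) = \Phi_{u\leftarrow v}^3+\max(\Phi_{u\leftarrow v}^1,z)-\max(\Phi_{u\leftarrow v}^2,z).
\]
The term $\Phi_{u\leftarrow v}^3$ does not depend on $z$, so it cancels when we take the difference $\mu_{u\leftarrow v}(x)-\mu_{u\leftarrow v}(x')$. It thus suffices to control the two remaining terms. First I would recall the elementary fact that for any fixed real constant $c$, the function $z\mapsto \max(c,z)$ is $1$-Lipschitz: $|\max(c,x)-\max(c,x')|\leq |x-x'|$. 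This is immediate from the identity $\max(c,z)=\tfrac12(c+z+|c-z|)$ together with the reverse triangle inequality, or can be checked by cases on the positions of $x,x'$ relative to $c$.

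The one subtlety is that the difference $\mu_{u\leftarrow v}(x)-\mu_{u\leftarrow v}(x')$ is a \emph{difference} of two $1$-Lipschitz terms, so a naive bound would give a Lipschitz constant of $2$, not $1$. To get the sharp constant $1$, the key observation is that both $\max(\Phi^1,\cdot)$ and $\max(\Phi^2,\cdot)$ are nondecreasing functions of $z$ with derivative in $\{0,1\}$ almost everywhere; hence the function $g(z)\eqdef \max(\Phi^1,z)-\max(\Phi^2,z)$ has derivative in $\{-1,0,1\}$ almost everywhere. More elementarily, one can write, for $x<x'$ (WLOG),
\[
g(x')-g(x)=\big(\max(\Phi^1,x')-\max(\Phi^1,x)\big)-\big(\max(\Phi^2,x')-\max(\Phi^2,x)\big),
\]
where each bracketed term lies in $[0,x'-x]$; therefore the difference lies in $[-(x'-x),(x'-x)]$, giving $|g(x')-g(x)|\leq |x'-x|$. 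Combining with the cancellation of $\Phi^3$ yields $|\mu_{u\leftarrow v}(x)-\mu_{u\leftarrow v}(x')|=|g(x)-g(x')|\leq|x-x'|$.

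I do not expect any genuine obstacle here; the only thing to be careful about is precisely the point above, that subtracting two monotone $1$-Lipschitz pieces still gives a $1$-Lipschitz function because each increment is nonnegative and bounded by the same quantity $|x-x'|$. An alternative clean route is to note that $\mu_{u\leftarrow v}(z)$, as originally defined in \eqref{MuDefine}, is a difference of two functions of the form $z\mapsto \max_y(\Phi_{u,v}(\cdot,y)+z\cdot\mathbf 1[y=1])$ shifted appropriately, but the expression \eqref{nicerform} is the cleanest starting point and I would go with that.
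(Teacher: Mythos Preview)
Your proof is correct and follows essentially the same approach as the paper: both start from the representation \eqref{nicerform} (equivalently \eqref{nicerformbeta}), cancel the constant $\Phi^3$, and then group the remaining terms as $[\max(\Phi^1,x)-\max(\Phi^1,x')]$ and $[\max(\Phi^2,x')-\max(\Phi^2,x)]$ to exploit that each increment lies in $[0,|x-x'|]$. The paper phrases this last step via the inequality $\max f-\max g\le\max(f-g)$, whereas you phrase it as the $1$-Lipschitz monotonicity of $z\mapsto\max(c,z)$; these are the same observation.
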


\begin{proof}
From (\ref{nicerformbeta}) we obtain
\begin{align*}
\mu_{u\leftarrow v}(x)-\mu_{u\leftarrow v}(x') & =  \max\Big(\Phi_{u,v}(1,0)-\Phi_{u,v}(1,1),x\Big)
-\max\Big(\Phi_{u,v}(0,0)-\Phi_{u,v}(0,1),x\Big)\\
        &   -\max\Big(\Phi_{u,v}(1,0)-\Phi_{u,v}(1,1),x'\Big) +\max\Big(\Phi_{u,v}(0,0)-\Phi_{u,v}(0,1),x'\Big).
\end{align*}
Using twice the relation $\max_x f(x)-\max_x g(x)\leq \max_x (f(x)-g(x))$, we obtain:
\begin{align*}
\mu_{u\leftarrow v}(x)-\mu_{u\leftarrow v}(x') &\leq \max(0,x-x')+\max(0,x'-x) \\
& =|x-x'|
\end{align*}
The other inequality is proved similarly.
\end{proof}

\begin{Lemma}\label{CouplLemma}
For every $u,v\in V$ and every two real values $x,x'$
\begin{equation}\label{funccomp}
|\mu_{u\leftarrow v}(x)-\mu_{u\leftarrow v}(x')|\leq |Y_{u,v}|
\end{equation}
\end{Lemma}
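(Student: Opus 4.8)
The goal is to bound $|\mu_{u\leftarrow v}(x)-\mu_{u\leftarrow v}(x')|$ by $|Y_{u,v}|$, uniformly over $x,x'$. The plan is to use the alternative expression \eqref{nicerform}, namely $\mu_{u\leftarrow v}(z) = \Phi_{u\leftarrow v}^3+\max(\Phi_{u\leftarrow v}^1,z)-\max(\Phi_{u\leftarrow v}^2,z)$, so that the $\Phi^3$ term cancels in the difference and we are left with
\[
\mu_{u\leftarrow v}(x)-\mu_{u\leftarrow v}(x') = \big(\max(\Phi^1,x)-\max(\Phi^2,x)\big)-\big(\max(\Phi^1,x')-\max(\Phi^2,x')\big).
\]
The key observation is that the function $h(z) \eqdef \max(\Phi^1,z)-\max(\Phi^2,z)$ is a bounded, monotone, piecewise-linear function of $z$ whose total variation is exactly $|\Phi^1-\Phi^2| = |Y_{u,v}|$ (recall $Y_e = \Phi_e^2-\Phi_e^1$). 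Indeed, WLOG suppose $\Phi^1\le\Phi^2$: then for $z\le\Phi^1$ we have $h(z)=\Phi^1-\Phi^2$; for $\Phi^1\le z\le\Phi^2$ we have $h(z)=z-\Phi^2$, which increases from $\Phi^1-\Phi^2$ to $0$; and for $z\ge\Phi^2$ we have $h(z)=0$. So $h$ is nondecreasing with range contained in $[\Phi^1-\Phi^2,\,0]$, an interval of length $|Y_{u,v}|$. (The case $\Phi^1\ge\Phi^2$ is symmetric, with $h$ nonincreasing and range in $[0,\Phi^1-\Phi^2]$.)

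Given this, for any $x,x'$ we have $|\mu_{u\leftarrow v}(x)-\mu_{u\leftarrow v}(x')| = |h(x)-h(x')| \le \sup h - \inf h = |Y_{u,v}|$, since both $h(x)$ and $h(x')$ lie in the same interval of length $|Y_{u,v}|$. This is the entire argument; writing it out just requires the explicit case analysis on the sign of $\Phi^1-\Phi^2$ and the three-piece description of $h$.

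I do not anticipate a real obstacle here — the lemma is a direct consequence of the structure of $h$. The only mild care needed is to handle both orderings of $\Phi^1$ and $\Phi^2$ (or to phrase the monotonicity/range bound in a sign-agnostic way, e.g. by noting $h(z)+\Phi^2 = \max(\Phi^1,z)-\max(\Phi^2,z)+\Phi^2$ interpolates monotonically between $\Phi^1$ and $\Phi^2$). An alternative, even shorter route is to bound $h$ directly: $h(z) \le \max(\Phi^1,z) - z \le \max(\Phi^1-z,0)$ and $h(z)\ge \Phi^1 - \max(\Phi^2,z) \ge -\max(\Phi^2-\Phi^1, z-\Phi^1) $... but this gets messier than just tracking the range of the monotone function, so I would go with the monotonicity argument above.
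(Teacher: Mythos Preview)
Your proof is correct and follows essentially the same idea as the paper: both arguments show that the function $h(z)=\max(\Phi^1,z)-\max(\Phi^2,z)=\mu_{u\leftarrow v}(z)-\Phi^3$ has range contained in an interval of length $|Y_{u,v}|$, and conclude from there. The only cosmetic difference is that the paper obtains the two-sided bound on $h$ via the inequality $\max_a f(a)-\max_a g(a)\le \max_a(f(a)-g(a))$, yielding $h(z)\le\max(0,-Y_{u,v})$ and $-h(z)\le\max(0,Y_{u,v})$ directly, whereas you obtain the same bounds by an explicit piecewise/monotonicity analysis of $h$.
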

\begin{proof}
\comment{Let $Y_j=\Phi_{u,v_j}(1,1)-\Phi_{u,v_j}(1,0)-\Phi_{u,v_j}(0,1)+\Phi_{u,v_j}(0,0)$.}
Using $\eqref{nicerformbeta}$ and $\eqref{Cavitybin2}$, we have
\begin{align*}
\mu_{u\leftarrow v}(x)-(\Phi_{u,v}(1,1)-\Phi_{u,v}(0,1)) &=
\max(\Phi_{u,v}(1,0)-\Phi_{u,v}(1,1),x) \\
&-\max(\Phi_{u,vx}(0,0)-\Phi_{u,v}(0,1),x).
\end{align*}
By using the relation $\max_x f(x)-\max_x g(x)\leq \max_x (f(x)-g(x))$ on the right hand side, we obtain
\begin{align*}
\mu_{u\leftarrow v}(x)-(\Phi_{u,v}(1,1)-\Phi_{u,v}(0,1)) &\leq \max(0,-Y_{u,v}).
\end{align*}
Similarly
\begin{align*}
-\mu_{u\leftarrow v}(x')+(\Phi_{u,v}(1,1)-\Phi_{u,v}(0,1)) \leq \max(0,Y_{u,v}).
\end{align*}
Adding up
\begin{align*}
\mu_{u\leftarrow v}(x)-\mu_{u\leftarrow v}(x') \leq |Y_{u,v}|.
\end{align*}
The other inequality is also proven similarly.
\end{proof}

\comment{
In particular, this implies that if $Y_{u,v_j}=0$ then $\mu_{u\leftarrow v_j}=\mu'_j$.
Distance-dependent coupling induces a form of continuity
between the cavity difference $|B-B'|$ at node $u$ and the cavity differences $|B_j-B'_j|$ at the children of $u$.
}

\begin{Lemma}\label{CorrLemma}
    Suppose $(a,b)$-coupling holds. Then,
\begin{align}\label{Corr}
    \E|B(r)-B'(r)|\leq a \sum_{1\le j\le d} \E|B_j(r-1)-B'_j(r-1)|+b \sum_{1\le j\le d} \E\big[|B_j(r-1)-B'_j(r-1)|^2\big].
\end{align}
\end{Lemma}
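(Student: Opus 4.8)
The plan is to derive the bound on $\E|B(r)-B'(r)|$ by peeling off one layer of the cavity recursion and controlling each term $\mu_{u\leftarrow v_j}$ separately, using the $(a,b)$-coupling together with the continuity Lemma~\ref{ContLemma}. First I would write, using \eqref{eq:muu:a}--\eqref{eq:muu:b}, that
\begin{align*}
|B(r)-B'(r)| = |\mu_u(\bold B(r-1))-\mu_u(\bold B'(r-1))| \le \sum_{j=1}^d |\mu_{u\leftarrow v_j}(B_j(r-1))-\mu_{u\leftarrow v_j}(B'_j(r-1))|,
\end{align*}
since the potential term $\Phi_u(1)-\Phi_u(0)$ cancels and $\mu_u$ is a sum over neighbors. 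So it suffices to bound $\E|\mu_{u\leftarrow v_j}(B_j(r-1))-\mu_{u\leftarrow v_j}(B'_j(r-1))|$ for each fixed $j$.

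The key step is to condition on the pair $(B_j(r-1),B'_j(r-1))$ and on the realizations of everything \emph{except} the edge data $\Phi_{u,v_j}$; by Proposition~\ref{IndependenceFixLemma} the edge cost $\Phi_{u,v_j}$ is independent of $B_j(r-1)$ and $B'_j(r-1)$ (both are computed inside $\Graph_j=\Graph(u,j,1)$, which does not contain $u$ nor involves $\Phi_{u,v_j}$). One has to be slightly careful because $B_j(r-1)$ is $\mu_{v_j}(\bold B_j(r-2)) + \Phi_{v_j}(1)-\Phi_{v_j}(0)$ by \eqref{eq:muu2:a}, so the quantity $x+\Phi_v(1)-\Phi_v(0)$ appearing in Definition~\ref{DDcouple} is exactly of this form; this is why the coupling condition \eqref{DDcouple-eq} is phrased with the shift $\Phi_v(1)-\Phi_v(0)$ included. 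Now, on the event that $\mu_{u\leftarrow v_j}(B_j(r-1)) = \mu_{u\leftarrow v_j}(B'_j(r-1))$ the contribution is zero, and on the complementary event — which, conditionally on the values $x=$ "$B_j(r-1)$ minus its shift" and $x'$, has probability at most $a + b|x-x'| = a + b|B_j(r-1)-B'_j(r-1)|$ by $(a,b)$-coupling — we bound the difference crudely by $|B_j(r-1)-B'_j(r-1)|$ using Lemma~\ref{ContLemma}. Taking conditional expectation over $\Phi_{u,v_j}$ first gives
\begin{align*}
\E\big[\,|\mu_{u\leftarrow v_j}(B_j(r-1))-\mu_{u\leftarrow v_j}(B'_j(r-1))| \,\big|\, \mathcal{F}\,\big] \le \big(a + b|B_j(r-1)-B'_j(r-1)|\big)\,|B_j(r-1)-B'_j(r-1)|,
\end{align*}
where $\mathcal{F}$ is the $\sigma$-field generated by everything but $\Phi_{u,v_j}$. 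Then taking the outer expectation and summing over $j$ yields \eqref{Corr}.

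The main obstacle I anticipate is the independence bookkeeping: one must verify that conditioning on $\mathcal{F}$ (which contains $B_j(r-1)$, $B'_j(r-1)$, and $\Phi_{v_j}$) leaves $\Phi_{u,v_j}$ with its original distribution, so that the $(a,b)$-coupling inequality — a statement about the \emph{unconditional} law of $\Phi_{u,v_j}$ for fixed deterministic $x,x'$ — can legitimately be applied with $x,x'$ replaced by the (now frozen) random values. Proposition~\ref{IndependenceFixLemma} gives pairwise independence of $\Phi_{u,v_j}$ and $CE[\Graph(u,j,x),v_j,r-1,y]$, and one should note that $\Phi_{v_j}$ similarly does not involve $\Phi_{u,v_j}$ (the modification step \eqref{eq:GraphModified} in forming $\Graph(u,j,1)$ touches $\Phi_{v_k}$ for $k\neq j$ via $\Phi_{u,v_k}$, never $\Phi_{u,v_j}$); together these give the required conditional distribution. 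The rest is the elementary estimate combining $\mathbf{1}\{\text{no coupling}\}$ with Lemma~\ref{ContLemma}, which is routine.
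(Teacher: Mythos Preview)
Your overall strategy is exactly the paper's: peel off one step of the recursion, bound each $|\mu_{u\leftarrow v_j}(B_j(r-1))-\mu_{u\leftarrow v_j}(B'_j(r-1))|$ by Lemma~\ref{ContLemma} times the indicator of ``no coupling'', and control the latter via the $(a,b)$-coupling hypothesis. The displayed inequality you write down is precisely the one the paper obtains, and after summing over $j$ and taking expectations it gives \eqref{Corr}.

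There is, however, a genuine slip in your conditioning. You take $\mathcal{F}$ to be ``everything but $\Phi_{u,v_j}$'' and explicitly include $\Phi_{v_j}$ in it, and you describe the $(a,b)$-coupling as ``a statement about the unconditional law of $\Phi_{u,v_j}$ for fixed deterministic $x,x'$''. That is a misreading of Definition~\ref{DDcouple}: the probability in \eqref{DDcouple-eq} is over \emph{both} $\Phi_{u,v_j}$ and $\Phi_{v_j}$ (the shift $\Phi_{v_j}(1)-\Phi_{v_j}(0)$ is random there), with only $x,x'$ held fixed. If you freeze $\Phi_{v_j}$ as part of $\mathcal{F}$, then $B_j(r-1),B'_j(r-1)$ become deterministic numbers $z,z'$ and what you would need is $\pr_{\Phi_{u,v_j}}(\mu_{u\leftarrow v_j}(z)\ne\mu_{u\leftarrow v_j}(z'))\le a+b|z-z'|$, i.e.\ \eqref{DDcouple-eq2}; but the paper explicitly notes that \eqref{DDcouple-eq2} is in general strictly weaker than \eqref{DDcouple-eq}, so the hypothesis does not give you this.

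The fix, which is what the paper does, is to condition only on $\mu_{v_j}(\bold B_j(r-2))$ and $\mu_{v_j}(\bold B'_j(r-2))$. By \eqref{eq:muu2:a}--\eqref{eq:muu2:b} these determine $|B_j(r-1)-B'_j(r-1)|$ (the shift $\Phi_{v_j}(1)-\Phi_{v_j}(0)$ cancels), so the Lemma~\ref{ContLemma} bound is still a conditional constant; and since these quantities are computed in networks not containing $v_j$, they are independent of both $\Phi_{u,v_j}$ and $\Phi_{v_j}$ (Proposition~\ref{IndependenceFixLemma} applied one level deeper). Now \eqref{DDcouple-eq} applies verbatim with $x=\mu_{v_j}(\bold B_j(r-2))$, $x'=\mu_{v_j}(\bold B'_j(r-2))$, and $|x-x'|=|B_j(r-1)-B'_j(r-1)|$. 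Everything else in your argument stands.
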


\begin{proof}
Using \eqref{eq:CEeq}, we obtain:
\begin{align*}
\E|B(r)-B'(r)| &= \E\Big[\big| \Phi_u(1)-\Phi_u(0) +\sum_j \mu_{u\leftarrow v_j}(B_j(r-1)) - (\Phi_u(1)-\Phi_u(0))
-\sum_j \mu_{u\leftarrow v_j}(B_j'(r-1))\big|\Big] \\
&\leq  \sum_j \E  \big|\mu_{u\leftarrow v_j}(B_j(r-1))-\mu_{u\leftarrow v_j}(B_j'(r-1))\big|\\
 &= \sum_j  \mathbb {E} \Big [  \E\big[|\mu_{u\leftarrow v_j}(B_j(r-1))-\mu_{u\leftarrow v_j}(B_j'(r-1))|\big| \mu_{v_j}(\bold{B_j}(r-2),\mu_{v_j}(\bold{B'_j}(r-2)\big] \Big]
\end{align*}
By Lemma \ref{ContLemma}, we have $|\mu_{u\leftarrow v_j}(B_j(r-1)) - \mu_{u\leftarrow v_j}(B_j'(r-1))| \leq  |B_j(r-1)-B_j'(r-1)|$.
Also note from that from equation \eqref{eq:muu2:a} and \eqref{eq:muu2:b}, $|B_j(r-1)-B_j'(r-1)|=|\mu_{v_j}(\bold{B_j}(r-2))-\mu_{v_j}(\bold{B'_j}(r-2))|$; hence conditional on both  $\mu_{v_j}(\bold{B_j}(r-2)$ and $\mu_{v_j}(\bold{B'_j}(r-2)$, $|B_j(r-1)-B_j'(r-1)|$ is a constant.
 Therefore,
\begin{align}\label{Corr1}
\notag \E\Big[\big|\mu_{u\leftarrow v_j}(B_j(r-1)) - \mu_{u\leftarrow v_j}(B_j'(r-1))\big|\:\:\Big|  \mu_{v_j}(\bold{B_j}(r-2),\mu_{v_j}(\bold{B'_j}(r-2) \Big] \\
\leq|B_j(r-1)-B_j'(r-1)| \: \pr(\mu_{u\leftarrow v_j}(B_j(r-1))\not=\mu_{u\leftarrow v_j}(B_j'(r-1))\:|\:\mu_{v_j}(\bold{B_j}(r-2),\mu_{v_j}(\bold{B'_j}(r-2))
\end{align}
Note that in the (a,b) coupling definition, the probability is over the values of the functions $\Phi_{u,v_j}$, and $\Phi_v$. By proposition \ref{IndependenceFixLemma}, these are independent from $\mu_{v_j}(\bold{B_j}(r-2)$ and $\mu_{v_j}(\bold{B'_j}(r-2))$. Thus, by the (a,b) coupling assumption, $\pr(\mu_{u\leftarrow v_j}(B_j(r-1))\not=\mu_{u\leftarrow v_j}(B_j'(r-1))\:|\:\mu_{v_j}(\bold{B_j}(r-2),\mu_{v_j}(\bold{B'_j}(r-2))\leq a + b |B_j(r-1)-B_j'(r-1)|$.
The result then follows.
\end{proof}

Fix an arbitrary node $u$ in $\Graph$. Let $\mathcal{N}(u)=\{v_1,\ldots,v_d\}$. Let $d_j=|\mathcal{N}(v_j)|-1$ be the number of neighbors of $v_j$ in $\Graph$ other than $u$ for $j=1,\ldots,d$. We need to establish that for every two boundary condition $\bound,\bound'$
\begin{align}\label{aim}
\E|\text{CE}(\Graph,u,r,\bound)-\text{CE}(\Graph,u,r,\bound')|\leq K\alpha^r
\end{align}
We first establish the bound inductively for the case $d\leq \Delta-1$. Let $e_d$ denote the supremum of the left-hand side of \eqref{aim}, where the supremum is over all networks $\Graph'$ with degree at most $\Delta$, such that the corresponding constant $K_{\Phi'}\leq K_{\Phi}$, over all nodes $u$ in $\Graph$ with degree $|\mathcal{N}(u)|\leq \Delta-1$ and all over all choices of boundary conditions $\bound,\bound'$. Each condition corresponds to a different recursive inequality for $e_r$.

\paragraph{Condition \eqref{FirstCondition}}

Under \eqref{FirstCondition}, we claim that
\begin{align}\label{recursive-er}
e_r\leq a(\Delta-1)e_{r-1}+b(\Delta-1)^3 K_{\Phi} e_{r-2}
\end{align}
Applying \eqref{eq:muu2:a} and \eqref{eq:muu2:b}, we have
\begin{align*}
|B_j(r-1)-B'_j(r-1)|\leq \sum_{1\leq k\leq d_j} |\mu_{v_j\leftarrow v_{jk}}(B_{jk}(r-2))-\mu_{v_j\leftarrow v_{jk}}(B'_{jk}(r-2))|
\end{align*}
Thus,
\begin{align*}
|B_j(r-1)-B'_j(r-1)|^2\leq \Big(\sum_{1\leq k\leq d_j} |\mu_{v_j\leftarrow v_{jk}}(B_{jk}(r-2))-\mu_{v_j\leftarrow v_{jk}}(B'_{jk}(r-2))|\Big)^2\\
\leq d_j \sum_{1\leq k\leq d_j} |\mu_{v_j\leftarrow v_{jk}}(B_{jk}(r-2))-\mu_{v_j\leftarrow v_{jk}}(B'_{jk}(r-2))|^2
\end{align*}
By Lemmas \ref{ContLemma} and \ref{CouplLemma}
we have $|\mu_{v_j\leftarrow v_{jk}}(B_{jk}(r-2))-\mu_{v_j\leftarrow v_{jk}}(B'_{jk}(r-2))|\leq |B_{jk}(r-2)-B'_{jk}(r-2)|$ and $|\mu_{v_j\leftarrow v_{jk}}(B_{jk}(r-2))-\mu_{v_j\leftarrow v_{jk}}(B'_{jk}(r-2))|\leq |Y_{jk}|$. Also, $d_j\leq \Delta-1$.Therefore,
\begin{align}
|B_j(r-1)-B'_j(r-1)|^2 \leq (\Delta-1) \sum_{1\leq k\leq d_j} |B_{jk}(r-2)-B'_{jk}(r-2)|\:.\: |Y_{jk}|
\end{align}
By Proposition \ref{IndependenceFixLemma}, the random variables $|B_{jk}(r-2)-B'_{jk}(r-2)|$ and $|Y_{jk}|$ are independent. We obtain:
\begin{align}
\E |B_j(r-1)-B'_j(r-1)|^2 \leq& (\Delta-1) \sum_{1\leq k\leq d_j} \E|B_{jk}(r-2)-B'_{jk}(r-2)|\:.\: \E|Y_{jk}|\\
\leq& (\Delta-1) K_\Phi (\sum_{1\leq k \leq d_j} \E|B_{jk}(r-2)-B'_{jk}(r-2)|)\notag\\
\leq& (\Delta-1)^2 K_\Phi e_{r-2}\notag
\end{align}
where the second inequality follows from the definition of $K_\Phi$ and the third inequality follows from the definition of $e_r$ and the fact that the neighbors $v_{jk}$, $1\leq k \leq d_j$ of $v_j$ have degrees at most $\Delta-1$ in the corresponding networks for which $B_{jk}(r-2)$ and $B'_{jk}(r-2)$ were defined.
Applying Lemma \ref{CorrLemma} and the definition of $e_r$, we obtain
\begin{align*}
\E|B(r)-B'(r)|\leq& \: a \sum_{1 \leq j \leq d} \E|B_j(r-1)-B'_j(r-1)| + b \sum_{1\leq j \leq d} \E\big[|B_j(r-1)-B'_j(r-1)|^2\big]\\
\leq& \: a(\Delta-1) e_{r-1} + b (\Delta-1)^3 K_\Phi e_{r-2}
\end{align*}
This implies \eqref{recursive-er}.

From \eqref{recursive-er} we obtain that $e_r\leq K\alpha^r$ for $K=\Delta K_\Phi $ and $\alpha$ given as the largest in absolute value root of the quadratic equation $\alpha^2=a(\Delta-1) \alpha + b(\Delta-1)^3 K_\Phi$. We find this root to be
\begin{align*}
a=&\:\frac{1}{2}(a(\Delta-1)+\sqrt{a^2(\Delta-1)^2+4b(\Delta-1)^3K_\Phi})\\
\leq&\:a(\Delta-1)+\sqrt{b (\Delta-1)^3 K_\Phi}\\
<& \: 1
\end{align*}
where the last inequality follows from assumption \eqref{FirstCondition}. This completes the proof for the case that the degree $d$ of $u$ is at most $\Delta-1$.

Now suppose $d=|\mathcal{N}(u)|=\Delta$. Applying \eqref{eq:muu:a} and \eqref{eq:muu:b} we have
\begin{align*}
|B(r)-B'(r)|\leq \sum_{1\leq j \leq d} |\mu_{u\leftarrow v_j}(B_j(r-1)-\mu_{u\leftarrow v_j}(B'_j(r-1))|
\end{align*}
Applying again Lemma \ref{ContLemma}, the right-hand side is at most
$$\sum_{1\leq j \leq d} |B_j(r-1)-B'_j(r-1)| \leq \Delta e_{r-1}$$
since $B_j(r-1)$ and $B'_j(r-1)$ are defined for $v_j$ in a subnetwork $\Graph_j=\Graph(u,j,1)$, where $v_j$ has degree at most $\Delta-1$. Thus again the correlation decay property holds for $u$ with $\Delta K$ replacing $K$.

\paragraph{Condition \eqref{ThirdCondition}}

Recall from lemma \ref{CorrLemma} that for all $r$, we have:
\begin{align*}
    \E|B(r)-B'(r)|\leq a \sum_{1\le j\le d} \E|B_j(r-1)-B'_j(r-1)|+b \sum_{1\le j\le d} \E\big[|B_j(r-1)-B'_j(r-1)|^2\big].
\end{align*}
For all $j$, $|B_j(r-1)-B'_j(r-1)|=|\sum_k (\mu_{v_j\leftarrow v_{jk}}(B_{jk})- \mu_{v_j\leftarrow v_{jk}}(B_{jk}'))|$. Moreover, for each $j,k$, $|\mu_{v_j\leftarrow v_{jk}}(B_{jk})-\mu_{v_j\leftarrow v_{jk}}(B'_{jk})|\leq |Y_{jk}| \leq K_Y$ (the second inequality follows from Lemma \ref{CouplLemma}, the third by assumption).
As a result,
$$|B_j(r-1)-B_j'(r-1)|^2\leq (\Delta-1)K_Y|B_j(r-1)-B_j(r-1)|$$
We obtain:
\begin{align*}
    e_r\leq (a+bK_Y(\Delta-1))\:(\Delta-1)e_{r-1}
\end{align*}

Since $a(\Delta-1)+bK_Y(\Delta-1)^2<1$, $e_r$ goes to zero exponentially fast. The same reasoning as previously shows that this property implies correlation decay.
\comment{
\paragraph{Condition \eqref{SecondCondition}}

Since the network is locally tree-like, all $Y_e$ encountered in the cavity recursion $\text{CE}(\Graph,u,r)$ are independent. Another important observation is the following: since $(\Phi_e(x,y))_{x,y \in \{0,1\}^2})$ are i.i.d., the random variables $Y_e$ are symmetric ($Y_e$ and $-Y_e$ have the same distribution).
The next step of the proof is to observe that (\ref{nicerform}) can be rewritten as follows:
\begin{eqnarray*}
\mu_{u\leftarrow v_j}(z)&= &\Phi_{u\leftarrow v_j}^3+\max(\frac{X_{u\leftarrow v_j}-Y_{u,v_j}}{2},z)-\max(\frac{X_{u\leftarrow v_j}+Y_{u,v_j}}{2},z)\\
&=& \Phi_{u\leftarrow v_j}^3+\text{sign}(Y) \: \Big( \max(\frac{X_{u\leftarrow v_j}-|Y_{u,v_j}|}{2},z)-\max(m\frac{X_{u\leftarrow v_j}+|Y_{u,v_j}|}{2},z)\Big)\\
&=& \Phi_{u\leftarrow v_j}^3+\text{sign}(Y_{u,v_j})h(X_{u\leftarrow v_j},|Y_{u,v_j}|,z)
\end{eqnarray*}
where $h(x,y,b)\eqdef\max(\frac{1}{2}(x-y),b)-\max(\frac{1}{2}(x+y),b)$ is a nondecreasing function of $b$ for $y\geq0$.
It follows that for two real numbers $B_j,B_j'$,
\begin{eqnarray*}
\mu_{u\leftarrow v_j}(B_j)-\mu_{u\leftarrow v_j}(B_j')=\text{sign}(Y_{u,v}) \big(h(X_{u\leftarrow v_j},|Y_{u,v}|,B_j)-h(X_{u\leftarrow v_j},|Y_{u,v}|,B_j'))
\end{eqnarray*}
and so the sign of $\mu_{u\leftarrow v_j}(B_j)-\mu_{u\leftarrow v_j}(B'_j)$ is the product of the sign of $Y_{u,v}$ and the sign of $B_j-B_j'$.
For a symmetric random variable $Y$, $\epsilon=\text{sign}(Y)$ and $|Y|$ are independent, and $P(\epsilon=+1)=P(\epsilon=-1)=\frac{1}{2}$.
For all $j=1,\ldots d$ and $k=1,\ldots n_j$, let $$h_{jk} = (h(X_{v_j\leftarrow v_{jk}},|Y_{v_j,v_{jk}}|,B_{jk})-h(X_{v_j\leftarrow v_{jk}},|Y_{v_j,v_{jk}}|,B'_{jk})$$
and
$$ \epsilon_{jk}=\text{sign}(Y_{v_j,v_{jk}})$$
For any $(j,k)$,  $$\mu_{v_j\leftarrow v_{jk}}(B_{jk}(r-2))-\mu_{v_j\leftarrow v_{jk}}(B'_{jk}(r-2))=\epsilon_{jk} h_{jk}$$
Let $\mathcal{F}$ the $\sigma$-field generated by the set of random variables $\{(B_{jk}(r-1)),(B'_{jk}(r-1)),(|Y_{v_j,v_{jk}}|),(X_{v_j\leftarrow v_{jk}})\}$. Note that $h_{jk}$ is measurable with respect to $\mathcal{F}$.
We obtain:
\begin{align*}
\mathbb \E|B_{j}(r-1)-B'_{j}(r-1)|^2=\mathbb \E\Bigg[ \mathbb \E\Big[\big|\sum_{1\leq k \leq d_j} \epsilon_{jk} h_{jk})\big|^2 \Big| \mathcal{F}\Big]\Bigg]
\end{align*}

Conditional on $\mathcal{F}$, the inner expectation (which is taken only w.r.t the $\epsilon_{jk}$) is simply the variance of the random variable $\sum_k \epsilon_k c_k$, where $\epsilon_k$ are independent variables, and $c_k=h_{jk}$ are fixed constants. The variance is then $\sum_k c_k^2=\sum_k (h_{jk})^2$.

Therefore:
$$\mathbb \E|B_j(r-1)-B'_j(r-1)|^2\leq \E\big[\sum_{1\leq k \leq d_j} ( h_{jk})^2\big]$$

Using inequalities $|h_{jk}|\leq |Y_{v_j,v_{jk}}|$, $|h_{jk}|\leq |B_{jk}-B'_{jk}|$, and $\mathbb E|Y_{v_j,v_{jk}}|\leq K_\Phi$, we obtain:
$$\mathbb E|B_{j}(r-1)-B'_{j}(r-1)|^2\leq K_\Phi \sum_k \mathbb E|B_{jk}(r-2)-B'_{jk}(r-2)|$$

Therefore, using the same notations as previously, this implies:
$$e_r\leq a(\Delta-1) e_{r-1}+ b K_\Phi (\Delta-1)^2 e_{r-2}$$
which, given $(\Delta-1)(a+\sqrt{K_\Phi b})<1$, implies correlation decay at the desired rate.
}

\subsection{Establishing coupling bounds}
\subsubsection{Coupling Lemma}

Theorem~\ref{PropCouplingCorr} details sufficient condition under which the distance-dependent coupling induces correlation decay (and thus efficient decentralized algorithms, vis-\`a-vis Proposition \ref{prop:Complexity} and Theorem \ref{CorrMainThM}). It remains to show how can we prove coupling bounds. The following simple observation can be used to achieve this goal.

\noindent For any edge $(u,v)\in \Graph$, and any two real numbers $x,x'$, consider the following events
$$E^+_{u\leftarrow v}(x,x')=\{\min(x,x')+\Phi_v(1)-\Phi_v(0)\geq \max(\Phi^1_{u\leftarrow v},\Phi^2_{u\leftarrow v})\}$$
$$E^-_{u\leftarrow v}(x,x')=\{\max(x,x')+\Phi_v(1)-\Phi_v(0)\leq \min(\Phi^1_{u\leftarrow v},\Phi^2_{u\leftarrow v})\}$$
$$E_{u\leftarrow v}(x,x')=E^+_{u,v}(x,x') \cup E^-_{u,v}(x,x')$$

\begin{Lemma}\label{LemmaCoupling}
If $E_{u\leftarrow v}(x,x')$ occurs, then $\mu_{u\leftarrow v}(x+\Phi_v(1)-\Phi_v(0))=\mu_{u\leftarrow v}(x'+\Phi_v(1)-\Phi_v(0))$.
Therefore $$P(\mu_{u\leftarrow v}(x+\Phi_v(1)-\Phi_v(0))=\mu_{u\leftarrow v}(x'+\Phi_v(1)-\Phi_v(0))\geq P(E_{u\leftarrow v}(x,x'))$$
\end{Lemma}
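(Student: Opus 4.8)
The plan is to show directly that on the event $E_{u\leftarrow v}(x,x')$ the partial cavity function $\mu_{u\leftarrow v}(\cdot)$ takes the same value at $x+\Phi_v(1)-\Phi_v(0)$ and at $x'+\Phi_v(1)-\Phi_v(0)$, after which the probability bound is immediate by monotonicity of $\pr$. I would work from expression \eqref{nicerform}, namely $\mu_{u\leftarrow v}(z)=\Phi^3_{u\leftarrow v}+\max(\Phi^1_{u\leftarrow v},z)-\max(\Phi^2_{u\leftarrow v},z)$. The key observation is that the function $z\mapsto \max(\Phi^1_{u\leftarrow v},z)-\max(\Phi^2_{u\leftarrow v},z)$ is \emph{constant} for $z$ sufficiently large and for $z$ sufficiently small: precisely, for $z\ge \max(\Phi^1_{u\leftarrow v},\Phi^2_{u\leftarrow v})$ it equals $z-z=0$, wait — it equals $\Phi^1-\Phi^2$... let me restate: for $z\ge\max(\Phi^1_{u\leftarrow v},\Phi^2_{u\leftarrow v})$ both maxima equal $z$, so the difference is $0$ there; hmm, that's wrong too since then $\max(\Phi^1,z)=z=\max(\Phi^2,z)$ giving difference $0$. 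Actually the right statement: for $z\ge\max(\Phi^1,\Phi^2)$ the difference is $z - z = 0$; and for $z\le\min(\Phi^1,\Phi^2)$ the difference is $\Phi^1-\Phi^2$, a constant. Either way, on both tails the map is constant.

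So the main step is: abbreviate $w=x+\Phi_v(1)-\Phi_v(0)$ and $w'=x'+\Phi_v(1)-\Phi_v(0)$, and note that $w,w'$ lie in the same one of the two tail regions. If $E^+_{u\leftarrow v}(x,x')$ occurs, then $\min(w,w')\ge\max(\Phi^1_{u\leftarrow v},\Phi^2_{u\leftarrow v})$, hence both $w$ and $w'$ are $\ge\max(\Phi^1,\Phi^2)$, so $\max(\Phi^i,w)=w$ and $\max(\Phi^i,w')=w'$ for $i=1,2$, giving $\mu_{u\leftarrow v}(w)=\Phi^3+w-w=\Phi^3=\mu_{u\leftarrow v}(w')$. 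If instead $E^-_{u\leftarrow v}(x,x')$ occurs, then $\max(w,w')\le\min(\Phi^1,\Phi^2)$, so $\max(\Phi^i,w)=\Phi^i=\max(\Phi^i,w')$ for $i=1,2$, and $\mu_{u\leftarrow v}(w)=\Phi^3+\Phi^1-\Phi^2=\mu_{u\leftarrow v}(w')$. In both cases the two values coincide, which is exactly the claimed implication.

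The probability statement then follows trivially: the event $\{w,w'\text{ map to equal }\mu\text{ values}\}$ contains $E_{u\leftarrow v}(x,x')$, so $\pr$ of the former is at least $\pr(E_{u\leftarrow v}(x,x'))$. I do not anticipate a genuine obstacle here; the only thing to be careful about is bookkeeping with the definitions of $\Phi^1_{u\leftarrow v},\Phi^2_{u\leftarrow v}$ (from item 8 of the notation list) and making sure the shift by $\Phi_v(1)-\Phi_v(0)$ is carried consistently between the event definitions $E^\pm_{u\leftarrow v}$ and the arguments of $\mu_{u\leftarrow v}$. The lemma is really just the elementary fact that a piecewise-linear function with two "knees" is flat outside the interval spanned by its knees, applied to the two branches of $\mu_{u\leftarrow v}$.
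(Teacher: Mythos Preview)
Your argument is correct and follows essentially the same route as the paper: both use representation \eqref{nicerform} to observe that $\mu_{u\leftarrow v}(z)$ is constant (equal to $\Phi^3$) for $z\ge\max(\Phi^1,\Phi^2)$ and constant (equal to $\Phi^3+\Phi^1-\Phi^2$) for $z\le\min(\Phi^1,\Phi^2)$, then apply this at the shifted points $w=x+\Phi_v(1)-\Phi_v(0)$ and $w'=x'+\Phi_v(1)-\Phi_v(0)$. Aside from the meandering self-corrections in your write-up (which you should clean up), there is nothing missing.
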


\begin{proof}
From representation \eqref{nicerform}, we have $\mu_{u\leftarrow v}(x) = \Phi_{u\leftarrow v}^3+\max(\Phi_{u\leftarrow v}^1,z)-\max(\Phi_{u\leftarrow v}^2,z)$; let $x,x'$ be any two reals. If both $x$ and $x'$ are greater than both $\Phi_{u\leftarrow v}^1$ and $Phi_{u\leftarrow v}^2$, then $\mu_{u\leftarrow v}(x)=\Phi_{u\leftarrow v}^3=\mu_{u\leftarrow v}(x')$. If both $x$ and $x'$ are smaller than both $\Phi_{u\leftarrow v}^1$ and $Phi_{u\leftarrow v}^2$, then $\mu_{u\leftarrow v}(x)=\Phi_{u\leftarrow v}^3+\Phi_{u\leftarrow v}^1-\Phi_{u\leftarrow v}^2=\mu_{u\leftarrow v}(x')$. The result follows from applying the above observation to $x+\Phi_v(1)-\Phi_v(0)$ and $x'+\Phi_v(1)-\Phi_v(0)$.
\end{proof}

Note that Lemma \ref{LemmaCoupling} implies that the probability of coupling not occuring $P(\mu_{u\leftarrow v}(x+\Phi_v(1)-\Phi_v(0))\not = \mu_{u\leftarrow v}(x'+\Phi_v(1)-\Phi_v(0)))$ is upper bounded by the probability of $(E_{u\leftarrow v}(x,x'))^{c}$.
When obvious from context, we drop the subscript $u\leftarrow v$. We will often use the following description of $(E_(x,x'))^{c}$: for two real values $x\geq x'$,
\begin{eqnarray}\label{eq:Ecxx}
(E(x,x'))^c=\{\min(\Phi^1,\Phi^2)+ \Phi_{v}(0)-\Phi_v(1) < x < \max(\Phi^1,\Phi^2)+\Phi_v(0)-\Phi_v(1)+x-x'\}
\end{eqnarray}

\comment{
Recall that $\widehat {\mu_{u\leftarrow v}}(B)=\Phi^3_{u \leftarrow v}+\max(\Phi^1_{u\leftarrow v}+\partial \Phi_{v},z)-\max(\Phi^2_{u\leftarrow v}+\Phi_v(0)-\Phi_v(1),z)$.
Therefore, if $E^+$ occurs, $\widehat{\mu_{u\leftarrow v}}(B)=\widehat{\mu_{u\leftarrow v}}(B')=A^3$; if $E^-$ occurs, $\widehat{\mu_{u\leftarrow v}}(B)=\widehat{\mu_{u\leftarrow v}}(B')=A^3+\Phi^1_{u\leftarrow v}-\Phi^2_{u\leftarrow v}$.
The Lemma follows.
}

\subsubsection{Uniform Distribution. Proof of Theorem \ref{thm:Uniform}}

In order to prove Theorem \ref{thm:Uniform}, we compute the coupling parameters $a,b$ for this distribution and apply the second form of Theorem \ref{PropCouplingCorr}.

\begin{Lemma}\label{UnifLemma}
The network with uniformly distributed rewards described in section \ref{sec:resunifom}  exhibits $(a,b)$ coupling with $a=\frac{I_2}{2I_1}$ and $b=\frac{1}{2I_1}$.
\end{Lemma}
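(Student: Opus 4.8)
The plan is to feed the specific uniform law into the coupling criterion of Lemma~\ref{LemmaCoupling}. By that lemma it suffices, for a fixed edge $e=(u,v)$ and reals $x\ge x'$, to bound $\pr\big((E_{u\leftarrow v}(x,x'))^{c}\big)$ from above by $a+b(x-x')$ with $a=\tfrac{I_2}{2I_1}$, $b=\tfrac{1}{2I_1}$; the case $x<x'$ is symmetric and produces the $|x-x'|$ appearing in~\eqref{DDcouple-eq}. Using the explicit form~\eqref{eq:Ecxx} of $(E_{u\leftarrow v}(x,x'))^{c}$ and setting $\delta\eqdef\Phi_v(1)-\Phi_v(0)$ (which equals $\Phi_v(1)$ since $\Phi_v(0)=0$), the ``bad'' event is exactly $\{\delta\in(\min(\Phi^1,\Phi^2)-x,\ \max(\Phi^1,\Phi^2)-x')\}$, an interval for $\delta$ of length $|\Phi^1-\Phi^2|+(x-x')$.

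The first step is to condition on the four edge rewards $\Phi_{u,v}(\cdot,\cdot)$, hence on $\Phi^1$ and $\Phi^2$. Since $\delta=\Phi_v(1)$ is uniform on $[-I_1,I_1]$ and independent of the edge rewards, its density is at most $\tfrac{1}{2I_1}$, so the conditional probability of the bad interval is at most $\tfrac{1}{2I_1}\big(|\Phi^1-\Phi^2|+(x-x')\big)$. Taking expectations,
\[
\pr\big((E_{u\leftarrow v}(x,x'))^{c}\big)\ \le\ \frac{\E|\Phi^1-\Phi^2|}{2I_1}+\frac{x-x'}{2I_1},
\]
which immediately gives $b=\tfrac{1}{2I_1}$ and reduces everything to proving $\E|\Phi^1-\Phi^2|\le I_2$, which then yields $a=\tfrac{I_2}{2I_1}$.

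The remaining step is the only real computation. Here $\Phi^1-\Phi^2=\Phi_{u,v}(1,0)-\Phi_{u,v}(1,1)-\Phi_{u,v}(0,0)+\Phi_{u,v}(0,1)$ (which is $-Y_{u,v}$), and since the four rewards are i.i.d.\ uniform on the symmetric interval $[-I_2,I_2]$, this has the same law as $I_2\,(V_1+V_2+V_3+V_4)$ with $V_i$ i.i.d.\ uniform on $[-1,1]$. I would then evaluate $\E|V_1+V_2+V_3+V_4|$ exactly: $V_1+V_2$ has the triangular density on $[-2,2]$, so $V_1+\cdots+V_4$ is a sum of two independent triangulars, and a routine (piecewise-polynomial) integration --- equivalently, scaling the Irwin--Hall density of a sum of four uniforms on $[0,1]$ --- gives $\E|V_1+\cdots+V_4|=\tfrac{14}{15}<1$. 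Hence $\E|\Phi^1-\Phi^2|=\tfrac{14}{15}I_2\le I_2$, finishing the proof. The main obstacle is precisely this moment estimate: the crude bounds are not good enough ($\E|\Phi^1-\Phi^2|\le\sqrt{\Var(\Phi^1-\Phi^2)}=\tfrac{2}{\sqrt3}I_2>I_2$ by Jensen, and $\le\E|\Phi^1|+\E|\Phi^2|=\tfrac43 I_2>I_2$ by the triangle inequality), so one genuinely needs the sharp value of the mean absolute deviation of a sum of four uniforms.
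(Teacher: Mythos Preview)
Your argument is correct and essentially identical to the paper's: both condition on the edge rewards, bound the probability that $\Phi_v(0)-\Phi_v(1)$ falls in an interval of length $|\Phi^1-\Phi^2|+(x-x')$ using the uniform density bound $1/(2I_1)$, and then reduce to computing $\E|\Phi^1-\Phi^2|$, arriving at the same value $\tfrac{14}{15}I_2$. The only cosmetic difference is that the paper integrates the triangular densities of $\Phi^1,\Phi^2$ directly to obtain $\tfrac{7}{15}I_2$ (half of $\E|\Phi^1-\Phi^2|$), whereas you recast the computation as $\E|V_1+V_2+V_3+V_4|$ for i.i.d.\ uniforms on $[-1,1]$.
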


\begin{proof}
For any fixed edge $(u,v)\in \Graph$, $\Phi_{u\leftarrow v}^1$ and $\Phi_{u\leftarrow v}^2$ are i.i.d. random variables with a triangular distribution with support $[-2I_2,2I_2]$ and mode $0$. Because $\Phi_{u\leftarrow v}^1$ and $\Phi_{u\leftarrow v}^2$ are i.i.d., by symmetry we obtain:
\begin{align*}
\pr(&(E(x,x'))^c)=\\
&= 2\int_{-2I_2}^{2I_2} d\pr_{\Phi^1}(a_1) \int_{a_1}^{2I_2}  \: d\pr_{\Phi^2} (a_2)\: P(a_1+\Phi_v(0)-\Phi_v(1) < x <\Phi_v(0)-\Phi_v(1)+a_2+x-x')\\
&=2 \int_{-2I_2}^{2I_2}d\pr_{\Phi^1}(a_1) \int_{a_1}^{2I_2}  \: d\pr_{\Phi^2} (a_2)\: P(x'-a_2 < \Phi_v(0)-\Phi_v(1) < x-a_1)
\end{align*}
$P(x'-a_2 < \Phi_v(0)-\Phi_v(1) < x-a_1)$ can be upper bounded by $\frac{a_2-a_1+x-x'}{2I_1}$, and we obtain:
\begin{eqnarray*}
P(E(x,x')^c) &\leq& \frac{x-x'}{2I_1} +\frac{1}{I_1} \int_{-2I_2}^{2I_2} d\pr_{\Phi^1}(a_1) \int_{a_1}^{2I_2}  \: d\pr_{\Phi^2} (a_2) (a_2-a_1)
\end{eqnarray*}

Note that $d\pr_{\Phi^2} (a_2)=\frac{1}{4I_2^2}(a_2+2I_2) d(a_2)$ for $a_2\leq 0$, and $d\pr_{\Phi^2} (a_2)=\frac{1}{4I_2^2}(2I_2-a_2) d(a_2)$ for $a_2\geq 0$; identical expressions hold for $d\pr_{\Phi^1} (a_1)$.
Therefore, for $a_1\geq 0$,
\begin{eqnarray*}
\int_{a_1}^{2I_2}  \: d\pr_{\Phi^2} (a_2) (a_2-a_1)&=&\frac{1}{4I_2^2}\int_{a_1}^{2I_2} (2I_2-a_2) (a_2-a_1)\: d(a_2)\\
&=&\frac{1}{4I_2^2}\big( -\int_{a_1}^{2I_2} (2I_2-a_2)^2 d(a_2)+(2I_2-a_1)\int_{a_1}^{2I_2}(2I_2-a_2)d(a_2)  \big)\\
&=&\frac{1}{4I_2^2}\big( -\frac{1}{3}(2I_2-a_1)^3+\frac{1}{2} (2I_2-a_1)^3\big)=\frac{1}{24I_2^2}(2I_2-a_1)^3\\
\end{eqnarray*}
Similarly, for $a_1\leq 0$,
\begin{eqnarray*}
&&\int_{a_1}^{2I_2}  \: d\pr_{\Phi^2} (a_2) (a_2-a_1)=\comment{\\
&&\int_{a_1}^{0}  \: d\pr_{\Phi^2} (a_2) (a_2-a_1)+\int_{0}^{2I_2}  \: d\pr_{\Phi^2} (a_2) (a_2-a_1)=\\
&&\int_{a_1}^{0}  \: d\pr_{\Phi^2} (a_2) (a_2-a_1)+\frac{1}{3}I_2-\frac{1}{2}a_1=\\
&&\int_{a_1}^{0}  \: \frac{1}{4I_2^2}(a_2+2I_2)(a_2-a_1)d(a_2)+\frac{1}{3}I_2-\frac{1}{2}a_1=\\
&&\int_{a_1}^{0}  \: \frac{1}{4I_2^2}(a_2+2I_2)^2d(a_2)-(a_1+2I_2) \int_{a_1}^{0}  \: \frac{1}{4I_2^2}(a_2+2I_2)d(a_2)+\frac{1}{3}I_2-\frac{1}{2}a_1=\\
&&\frac{1}{12I_2^2}((2I_2)^3-(a_1+2I_2)^3)-(a_1+2I_2) \frac{1}{8I_2^2}(4I_2^2-(a_1+2I_2)^2)+\frac{1}{3}I_2-\frac{1}{2}a_1=\\
&&} -a_1+\frac{1}{24I_2^2} (a_1+2I_2)^3
\end{eqnarray*}
The final integral is therefore equal to:
\begin{eqnarray*}
 &&\int_{-2I_2}^{2I_2} d\pr_{\Phi^1}(a_1) \int_{a_1}^{2I_2}  \: d\pr_{\Phi^2} (a_2) (a_2-a_1)\\
 &=&\frac{1}{4I_2^2}\Big(\int_{-2I_2}^0\big((a_1+2I_2)(-a_1+\frac{1}{24I_2^2}(a_1+2I_2)^3\big)d(a_1)+\int_{0}^{2I_2} \frac{1}{24I_2^2}(2I_2-a_1)^4 d(a_1)\Big)\\
  &=&\frac{1}{4I_2^2}\Big(\frac{24}{15}I_2^3+\frac{4}{15}I_2^3\Big)=\frac{7}{15}I_2
\end{eqnarray*}
Finally,
\begin{eqnarray*}
P((E(x,x')^c)\leq \frac{7I_2}{15I_1}+\frac{|x-x'|}{2I_1}\leq \frac{I_2}{2I_1}+\frac{|x-x'|}{2I_1}
\end{eqnarray*}
Therefore, the system exhibits coupling with parameters $(\frac{I_2}{2I_1},\frac{1}{2I_1})$.
\end{proof}

We can now finish the proof of Theorem \ref{thm:Uniform}. For all $(u,v)\in E$ and $x,y \in \chi$, $|\Phi_{u,v}(x,y)|\leq I_2$. Therefore, for any $(u,v)$, $|Y_{u,v}|=|\Phi_{u,v}(1,1)-\Phi_{u,v}(0,1)-\Phi_{u,v}(1,0)+\Phi_{u,v}(0,0)|\leq 4 I_2$.

Note that for all edges, $|Y_e|\leq 4I_2$, so that the condition $\beta (\Delta-1)^{2}<1$ implies $\frac{I_2}{2I_1} (\Delta-1)+\frac{4I_2}{2I_1}(\Delta-1)^{2}<1$. Since $(\Delta-1)\leq (\Delta-1)^2$, if $\beta (\Delta-1)^2<1$ we also have $\frac{I_2}{2I_1} (\Delta-1)+ \frac{4 I_2}{2 I_1} (\Delta-1)^2<1$. This is exactly condition \eqref{ThirdCondition} with $a,b$ as given by
Lemma \ref{UnifLemma} and $K_Y=4 I_2$. It follows that $\Graph$ exhibits exponential correlation decay, and since Assumptions $1$ and $2$ hold, all conditions of Corollary \ref{CorrMainCor} are satisfied, and there exists an additive FPTAS for computing $J_\Graph$.

\comment{
\subsubsection{Deterministic functions on the line}

In this section, assume that the functions $\Phi_{u,v}$ and $\Phi_v$ are deterministic and bounded by $K_\Phi$, and that $\Delta=2$, so that $(V,E)$ is a disjoint union of path and cycles. This is a slight generalization of the setting of~\cite{VanRoyRus}, since they only considered the line. We rederive their result in light of the correlation decay phenomenon.
For given $r \in \mathbb{N}^+$ and $\delta>0$, construct $\bold{x}^{r,\delta}$ as follows:
\begin{enumerate}
    \item For each node $v\in V$, force $x_v$ to $0$ with probability $\delta$, and leave $x_v$ undecided otherwise
    \item For each undecided node $v \in V$, run the cavity algorithm with depth $r$ and choose the action which maximizes the approximate cavity function obtained by the cavity algorithm.
\end{enumerate}

Then:
\begin{Proposition}
For any $r>0$, there exists $\delta>0$ such that the suboptimality gap of $\bold{x}^{r,\delta}$ is bounded by $L(|V|+|E|) \frac{\log(r)}{r}$, where $L$ is a constant which depends only on $K_\Phi$.
\end{Proposition}

\begin{proof}
Let $\delta>0$, and let $(h_u)_{u \in V}$ be a family of i.i.d. bernoulli random variables with probability $\delta$. For each $u\in V$, the action $x_u$ is forced to $0$ if $h_u=1$. \\
Consider the modified network $\Graph^\delta=(V,E,\Phi^\delta,\chi)$, where for any $u$, if $h_u=1$, the potential function $\Phi_u(1)$ is changed to $-\infty$ ($\Phi_u(0)$ is unchanged), and for any $(u,v)$, if $h_u=1$, the interaction function $\Phi_{u,v}(x_u,x_v)$ is changed to $\Phi_{u,v}(0,x_v)$ (and becomes a function of $x_v$ only).
Let $\bold x^\delta$ be the optimal solution of $\Graph^\delta$, and $\bold x^*$ be the optimal solution of $\Graph$.
Let $H=\{u\in V \: : \: h_u=1\}$, and let $E'=\{(u,v): u \not \in H, v \not \in H\}$.
\begin{align*}
F(\bold{x^\delta})=&F^{\delta}(\bold{x}^{\delta})\\
\geq&  \sum_{(u,v)\in E'}\Phi_{u,v}(x_u^{*},x_v^{*})+ \sum_{u \not \in H} \Phi_u(x_u^{*}) + \sum_{(u,v)\in E \backslash E'}\Phi_{u,v}(x^\delta_u,x^\delta_v)+\sum_{u  \in H} \Phi_u(x^\delta_u)
\end{align*}
Substracting this quantity from $F(\bold{x^*})$, we obtain:
\begin{align*}
F(\bold{x^*})-F(\bold{x}^\delta)\leq& \sum_{(u,v)\in E \backslash E'}|\Phi_{u,v}(x^\delta_u,x^\delta_v)-\Phi_{u,v}(x_u^*,x_v^*)|+\sum_{u  \in H} |\Phi_u(x^\delta_u) -\Phi_u(x^*)|\\
\leq  &6K_\Phi |H|
\end{align*}
We have $\E|H|=|V| \delta$. By taking expectations, we obtain
$$\E[F(\bold x^*)-F(x^\delta)]\leq 6K_\Phi \:\delta |V|$$

Let us now prove that forcing some variables to $0$ induces coupling in the network with value function $F^\delta$:
remember that for any message sent on an edge $(u,v)$,
$|\mu_{u \leftarrow v}(B)-\mu_{u \leftarrow v}(B')|\leq |Y_{u,v}|$. But if $h_v=1$,
then $\Phi_{u,v}(x_u,x_v)$ is actually $\Phi_{u,v}(x_u,0)$ and it immediately follows in that case that $Y_{u,v}=0$.
Therefore, for any two messages $\mu,\mu'$ sent on the computation tree and started with different boundary conditions, we have
$$\pr(\mu=\mu')\geq \delta$$
It follows the system has distance-dependent coupling with
parameters $(a,b)$, with $a=(1-\delta)$ and $b=0$. Since $\Delta=2$,
$a(1-\Delta)=(1-\delta)<1$ and by Proposition~
\ref{PropCouplingCorr}, the network experiences exponential
correlation decay. By equation~\eqref{CorrMainEq}, there exists
constants $K_1,K_2$ which depend only on $K_\Phi$ such that
$$\E[F(\bold{x^*})-F(\bold{x}^{r,\delta})]\leq (|V|+|E|) K_1 \delta + K_2 (1-\delta)^r$$
By choosing $\delta=\frac{\log{r}}{r}$, it follows that
\begin{align*}
\E[f(\bold{x})-f(\bold{x}^{r,\delta})]\leq& (|V|+|E|) K_1\frac{ \log(r)}{r}+K_2 \exp(r \log(1-\delta))\\
\leq& (|V|+|E|) K_1\frac{\log(r)}{r}+K_2 \exp(-r \frac{\log{r}}{r})\\
\leq& (|V|+|E|) K_1\frac{\log(r)}{r}+K_2 \frac{1}{r}\\
\leq& (|V|+|E|) L \frac{\log(r)}{r}
\end{align*}
\end{proof}
}

\subsubsection{Gaussian distribution. Proof of Theorem \ref{thm:Gauss}}

In this section, we compute the coupling parameters for Gaussian distributed reward functions. Rather than considering only the assumptions of Theorem \ref{thm:Gauss}, we place ourselves in a more general framework. The proof will then follow from the application of Theorem \ref{PropCouplingCorr} (first condition) and a special case of the computation detailed below (see Corollary \ref{Cor:gaussiid}). Assume that for every edge $e=(u,v)$ the value functions $(\Phi_{u,v}(0,0),\Phi_{u,v}(0,1),\Phi_{u,v}(1,0),\Phi_{u,v}(1,1))$ are  independent, identically distributed four-dimensional Gaussian random variables, with mean $\bold{\mu}=(\mu_i)_{i \in \{ 00,01,10,11\}}$, and covariance matrix $S=(S_{ij})_{i,j \in \{00,01,10,11\}}$.
For every node $v\in V$, suppose $\Phi_v(1)=0$ and that $\Phi_v(0)$ is a Gaussian random variable with mean $\mu_p$ and standard deviation $\sigma_p$.
Moreover, suppose all the $\Phi_v$ and $\Phi_e$ are independent for $v\in V$, $e \in E$.
Let
\begin{align*}
\sigma_1^2&=S_{10,10}-2S_{10,11}+S_{11,11}+\sigma_p^2 &\sigma_2^2=&S_{00,00}-2S_{00,01}+S_{01,01}+\sigma_p^2 \\
\rho&=(\sigma_1\sigma_2)^{-1} (S_{00,10}-S_{00,11}-S_{01,10}+S_{01,11}+\sigma_p^2) &C=&\frac{\sigma_2^2-\sigma_1^2}{\sqrt{(\sigma_1^2+\sigma_2^2)^2-4\rho^2\sigma_1^2\sigma_2^2}} \\
\sigma_X^2&=\sigma_1^2+\sigma_2^2+2\rho \sigma_1\sigma_2  &\sigma_Y^2=&\sigma_1^2+\sigma_2^2-2\rho \sigma_1\sigma_2
\end{align*}
\begin{Proposition}\label{prop:Gaussiancase}
Assume $C<1$. Then the network exhibits coupling with parameters $(a,b)$ equal to:
\begin{align*}
a=&\frac{1}{\pi}\arctan\Big(\sqrt{\frac{1}{1-C^2}}\frac{\sigma_Y}{\sigma_X}\Big)+\sqrt{\frac{2}{\pi}}\frac{|\mu_{00}+\mu_{11}-\mu_{10}-\mu_{01}|}{\sigma_X}\\
b=& \sqrt{\frac{2}{\pi}}\frac{1}{\sigma_X}
\end{align*}
\end{Proposition}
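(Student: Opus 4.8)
The starting point is Lemma~\ref{LemmaCoupling}, which turns the problem into a Gaussian tail estimate: for any edge $e=(u,v)$ and reals $x\ge x'$,
\[
\pr\big(\mu_{u\leftarrow v}(x+\Phi_v(1)-\Phi_v(0))\neq\mu_{u\leftarrow v}(x'+\Phi_v(1)-\Phi_v(0))\big)\le \pr\big((E_{u\leftarrow v}(x,x'))^c\big),
\]
so it suffices to prove $\pr((E_{u\leftarrow v}(x,x'))^c)\le a+b\,|x-x'|$ with the stated $a,b$. Using the explicit description~\eqref{eq:Ecxx} and writing $A^1=\Phi^1_{u\leftarrow v}-(\Phi_v(1)-\Phi_v(0))$, $A^2=\Phi^2_{u\leftarrow v}-(\Phi_v(1)-\Phi_v(0))$, this event is exactly $\{\min(A^1,A^2)<x,\ \max(A^1,A^2)>x'\}$.

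First I would note that $(A^1,A^2)$ is a two-dimensional Gaussian vector, being an affine function of the jointly Gaussian family $\big(\Phi_{u,v}(0,0),\Phi_{u,v}(0,1),\Phi_{u,v}(1,0),\Phi_{u,v}(1,1),\Phi_v(0)\big)$, and pass to the coordinates $\widetilde X=A^1+A^2$, $\widetilde Y=A^2-A^1$, for which $\min(A^1,A^2)=\tfrac12(\widetilde X-|\widetilde Y|)$ and $\max(A^1,A^2)=\tfrac12(\widetilde X+|\widetilde Y|)$. A direct second-moment computation identifies $\Var(A^1)=\sigma_1^2$, $\Var(A^2)=\sigma_2^2$, $\mathrm{Corr}(A^1,A^2)=\rho$, hence $\Var(\widetilde X)=\sigma_X^2$, $\Var(\widetilde Y)=\sigma_Y^2$, $\mathrm{Corr}(\widetilde X,\widetilde Y)=C$, and $\E[\widetilde Y]=\mu_{00}+\mu_{11}-\mu_{10}-\mu_{01}$; the hypothesis $C<1$ is precisely what makes the conditional law of $\widetilde X$ given $\widetilde Y$ nondegenerate. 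In these coordinates the event becomes the ``bowtie'' $\{\,2x'-|\widetilde Y|<\widetilde X<2x+|\widetilde Y|\,\}$.

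The heart of the proof is bounding the probability of this bowtie, which I would split into three pieces: (i) the centered double cone $\{\,|\widetilde X-2x'|<|\widetilde Y-\E\widetilde Y|\,\}$; (ii) a band in $\widetilde X$ of total length $2|\E\widetilde Y|$ coming from $|\widetilde Y|\le|\widetilde Y-\E\widetilde Y|+|\E\widetilde Y|$; and (iii) a band in $\widetilde X$ of length $2(x-x')$. For (ii) and (iii) one bounds the probability that $\widetilde X$, minus the appropriate $\widetilde Y$-measurable endpoints, lies in an interval of the prescribed length by the maximal Gaussian density $(2\pi\sigma_X^2)^{-1/2}$, producing the terms $\sqrt{2/\pi}\,|\E\widetilde Y|/\sigma_X$ and $\sqrt{2/\pi}\,(x-x')/\sigma_X$. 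For (i), after recentering $\widehat Y=\widetilde Y-\E\widetilde Y$ and whitening the pair $(\widetilde X,\widehat Y)$, conditioning on $\widehat Y$ and using that a Gaussian probability of an interval is maximized over the mean by centering reduces the bound to $\E\big[\,2\Phi\big(|\widehat Y|/(\sigma_X\sqrt{1-C^2})\big)-1\,\big]$; since $\widehat Y/\sigma_Y$ is standard normal and the ratio of absolute values of two independent normals is Cauchy, the identity $\pr(|Z_1/Z_2|\le\lambda)=\tfrac2\pi\arctan\lambda$ yields exactly the $\arctan\!\big(\sigma_Y/(\sigma_X\sqrt{1-C^2})\big)$ contribution. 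Adding (i)--(iii) gives $a$ and $b$.

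I expect step (i), the Gaussian measure of the centered double cone, to be the main obstacle: the correlation $C$ must be kept under control (naive conditioning on $\widetilde Y$ inflates $\Var(\widetilde X)$ by a factor $1/(1-C^2)$), which is handled either by the whitening/Cauchy computation above or, equivalently, via the bivariate orthant identity $\pr(Z_1>0,Z_2>0)=\tfrac14+\tfrac1{2\pi}\arcsin\rho$ together with $\arcsin t=\arctan\!\big(t/\sqrt{1-t^2}\big)$ and the algebraic relation $1-C^2=4\sigma_1^2\sigma_2^2(1-\rho^2)/(\sigma_X^2\sigma_Y^2)$, which lets one re-express the angular opening of the cone purely in terms of $\sigma_X,\sigma_Y,C$. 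The remaining contributions (ii)--(iii) are routine Mills-ratio estimates, and the special case $\sigma_1=\sigma_2$, $\E\widetilde Y=0$ relevant to Theorem~\ref{thm:Gauss} (Corollary~\ref{Cor:gaussiid}) then follows by substituting $\sigma_Y/(\sigma_X\sqrt{1-C^2})=\beta$.
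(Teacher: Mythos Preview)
Your outline follows the paper's proof closely: both reduce to $\pr((E_{u\leftarrow v}(x,x'))^c)$ via Lemma~\ref{LemmaCoupling}, pass to the sum/difference coordinates $(\widetilde X,\widetilde Y)$, rewrite the event as the ``bowtie'' $\{|\widetilde Y|\ge |\widetilde X-2b|-t\}$ (paper's parametrization $b=(x+x')/2$, $t=x-x'$), handle the mean of $\widetilde Y$ by enlarging $t$ to $t'=t+|\E\widetilde Y|$, and split into a band in $\widetilde X$ plus a cone-like piece. The paper then invokes Anderson's inequality to center $\widetilde X$ for fixed $\widetilde Y$ and evaluates the cone via polar coordinates; your ``conditioning on $\widehat Y$ and centering'' is exactly the one-dimensional case of Anderson, and your Cauchy identity $\pr(|Z_1/Z_2|\le\lambda)=\tfrac{2}{\pi}\arctan\lambda$ is an equivalent (and arguably cleaner) way to evaluate the same integral the paper computes in polar form.

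One point deserves care. In your steps (ii)--(iii) you bound the band probabilities by the \emph{marginal} density $(2\pi\sigma_X^2)^{-1/2}$, but the band endpoints are $\widehat Y$-measurable and $\widetilde X$ is correlated with $\widehat Y$; conditioning on $\widehat Y$ gives conditional variance $\sigma_X^2(1-C^2)$ and hence the bound $(2\pi\sigma_X^2(1-C^2))^{-1/2}$. This is exactly what the paper obtains in its final displayed inequality~\eqref{generalcouple}, where both the $|\mu_2-\mu_1|$ term and the $t$ term carry the factor $\sqrt{2/(\pi(1-C^2))}$ rather than $\sqrt{2/\pi}$. So your argument, carried through correctly, reproduces the paper's proof with the same $(1-C^2)^{-1/2}$ in the band constants; the stated $a,b$ in the Proposition omit this factor, but the application to Theorem~\ref{thm:Gauss} (Corollary~\ref{Cor:gaussiid}) has $C=0$, where the discrepancy disappears.
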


\begin{Corollary}\label{Cor:gaussiid}
Suppose that for each $e$,$({\Phi}_e(0,0),{\Phi}_e(0,1),{\Phi}_e(1,0),{\Phi}_e(1,1))$ are i.i.d. Gaussian variables with mean $0$ and standard deviation $\sigma_e$.
Let $\beta=\sqrt{\frac{\sigma_e^2}{\sigma_e^2+\sigma_p^2}}$
Then $a\leq \beta$ and $b K_{\Phi}\leq \beta$.
\end{Corollary}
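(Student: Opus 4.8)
The plan is to obtain Corollary~\ref{Cor:gaussiid} as a direct specialization of Proposition~\ref{prop:Gaussiancase}. First I substitute the i.i.d.\ data $S=\sigma_e^2 I$ (the $4\times4$ identity times $\sigma_e^2$), $\mu_{00}=\mu_{01}=\mu_{10}=\mu_{11}=0$, and $\mu_p=0$ into the quantities defined just before Proposition~\ref{prop:Gaussiancase}. A one‑line computation gives $\sigma_1^2=\sigma_2^2=\sigma_e^2-0+\sigma_e^2+\sigma_p^2=2\sigma_e^2+\sigma_p^2$ (so $\sigma_1=\sigma_2$), $\rho=(\sigma_1\sigma_2)^{-1}(0-0-0+0+\sigma_p^2)=\sigma_p^2/(2\sigma_e^2+\sigma_p^2)$, $\sigma_X^2=2\sigma_1^2(1+\rho)=4(\sigma_e^2+\sigma_p^2)$, and $\sigma_Y^2=2\sigma_1^2(1-\rho)=4\sigma_e^2$. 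Since $\sigma_1=\sigma_2$ the numerator of $C$ vanishes, while the denominator $2\sigma_1^2\sqrt{1-\rho^2}$ is nonzero because $|\rho|<1$ whenever $\sigma_e>0$; hence $C=0<1$, so the hypothesis of Proposition~\ref{prop:Gaussiancase} is satisfied and the coupling parameters $(a,b)$ it provides are valid.

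Next I plug these values into the formulas for $a$ and $b$. Because all edge means are zero, the second term in the expression for $a$ is $0$, and with $C=0$ the factor $\sqrt{1/(1-C^2)}=1$, so $a=\tfrac1\pi\arctan(\sigma_Y/\sigma_X)=\tfrac1\pi\arctan\!\big(2\sigma_e/(2\sqrt{\sigma_e^2+\sigma_p^2})\big)=\tfrac1\pi\arctan(\beta)$, where $\beta=\sigma_e/\sqrt{\sigma_e^2+\sigma_p^2}$ as in the statement; likewise $b=\sqrt{2/\pi}\,/\,\sigma_X=\sqrt{2/\pi}\,/\,(2\sqrt{\sigma_e^2+\sigma_p^2})$. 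Now I bound both quantities. Using $\arctan t\le t$ for $t\ge 0$ and $1/\pi<1$, I get $a=\tfrac1\pi\arctan(\beta)\le \beta/\pi\le\beta$. For the product $bK_\Phi$, I recall that in condition~\eqref{FirstCondition} of Theorem~\ref{PropCouplingCorr} the constant $K_\Phi$ enters only as an upper bound on $\E|Y_e|$; here $Y_e=\Phi_e(1,1)-\Phi_e(1,0)-\Phi_e(0,1)+\Phi_e(0,0)$ is mean‑zero Gaussian with variance $4\sigma_e^2$, so $\E|Y_e|=2\sqrt{2/\pi}\,\sigma_e\le 2\sigma_e$, and I may take $K_\Phi=2\sigma_e$ (consistent with the edge side of Assumption~\ref{asm2}). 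Then $bK_\Phi=\sqrt{2/\pi}\cdot\sigma_e/\sqrt{\sigma_e^2+\sigma_p^2}=\sqrt{2/\pi}\,\beta<\beta$ since $\sqrt{2/\pi}<1$. This establishes both claimed inequalities.

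I do not expect a genuine obstacle: the argument is entirely a specialization plus two elementary inequalities. The only points meriting a sentence of care are (i) verifying that the mean‑correction term in $a$ drops out (it does, as all edge means vanish) and that $C=0$ legitimately, and (ii) being explicit about which value of $K_\Phi$ makes $bK_\Phi\le\beta$ hold with the clean constant $\beta$ — taking the edge‑side bound $K_\Phi=2\sigma_e$ is exactly what lets the harmless factor $\sqrt{2/\pi}$ be absorbed. With the Corollary in hand, Theorem~\ref{thm:Gauss} follows immediately: substituting $a\le\beta$ and $bK_\Phi\le\beta$ into condition~\eqref{FirstCondition} of Theorem~\ref{PropCouplingCorr} yields $\beta(\Delta-1)+\sqrt{\beta}(\Delta-1)^{3/2}<1$, i.e.\ $\beta(\Delta-1)+\sqrt{\beta(\Delta-1)^3}<1$, whence exponential correlation decay and, via Corollary~\ref{CorrMainCor}, the additive FPTAS.
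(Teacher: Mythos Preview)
Your proof is correct and follows essentially the same approach as the paper: specialize Proposition~\ref{prop:Gaussiancase} to the i.i.d.\ case to obtain $\sigma_X^2=4(\sigma_e^2+\sigma_p^2)$, $\sigma_Y^2=4\sigma_e^2$, $C=0$, then bound $a=\tfrac1\pi\arctan\beta\le\beta$ and $bK_\Phi\le\sqrt{2/\pi}\,\beta\le\beta$ using $K_\Phi=2\sigma_e$. Your treatment is slightly more detailed in showing the intermediate $\sigma_1,\sigma_2,\rho$ computations and in justifying the choice $K_\Phi=2\sigma_e$, but the argument is the same.
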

\begin{proof}
Under the conditions of corollary \ref{Cor:gaussiid}, we have $\sigma_{Y}^2=4\sigma_{e}^{2}$, $\sigma_{X}^2=4\sigma_{p}^{2}+4\sigma_{e}^{2}$, and $C=0$. Note also that $K_\Phi\leq 2 \sigma_e$
By Proposition~\ref{prop:Gaussiancase}, the network exhibits coupling with parameters
\begin{align*}
a=&\frac{1}{\pi}\arctan\Big(\sqrt{\frac{\sigma_e^2}{\sigma_e^2+\sigma_p^2}}\Big)\leq \frac{1}{\pi} \beta \leq \beta \\
b=& \sqrt{\frac{1}{2\pi}}\frac{1}{\sqrt{\sigma_e^2+\sigma_p^2}} \text{       and so,      } b K_\Phi \leq \sqrt{\frac{2}{\pi}} \beta \leq \beta
\end{align*}
\end{proof}

Remark that when $\sigma_e\rightarrow 0$, $\beta\rightarrow 0$ and correlation decay takes place; moreover, combining Corollary \ref{Cor:gaussiid} and Theorem \ref{PropCouplingCorr} (condition \eqref{FirstCondition}) directly yields Theorem \ref{thm:Gauss}.

\begin{proof}[Proof of Proposition \ref{prop:Gaussiancase} ]

Fix an edge $(u,v)$ in $E$; for simplicity, in the rest of this section denote $\bar{\Phi}^1=\Phi^1_{u\leftarrow v}+\Phi_v(0)-\Phi_v(1)$ and $\bar{\Phi}^2=\Phi^2_{u\leftarrow v}+\Phi_v(0)-\Phi_v(1)$.
It follows that $(\overline{\Phi}^1,\overline{\Phi}^2)$ follows a bivariate Gaussian distribution with mean $(\mu_1,\mu_2)$:
\begin{align*}
\mu_1=\mu_{10}-\mu_{11}+\mu_p \text{ and } \mu_2=\mu_{00}-\mu_{01}+\mu_p
\end{align*}
and covariance matrix
\[ S_A=\left( \begin{array}{cc}
\sigma_1^2 & \rho \sigma_1 \sigma_2 \\
\rho \sigma_1 \sigma_2 & \sigma_2^2    \end{array} \right)\]
Let $X=\overline{\Phi}^1+\overline{\Phi}^2$, $Y=\overline{\Phi}^2-\overline{\Phi}^1$. Then, $(X,Y)$ is a bivariate Gaussian vector with means $\E[X]=\mu_1+\mu_2$ and $\E[Y]=\mu_2-\mu_1$, standard deviations $\sigma_X,\sigma_Y$ and correlation $C$ as defined previously.
Denote also $\overline{X}\eqdef X-E[X]$ and $\overline{Y}\eqdef Y-E[Y]$ the centered versions of $X$ and $Y$. Consider two real numbers $x\geq x'$, and let $(b,t)$ be the two real numbers such that $x=b+t/2$, $x'=b-t/2$. From equation \eqref{eq:Ecxx}, we have
\begin{equation*}
(E(x,x'))^c=\{\min(\overline{\Phi}^1,\overline{\Phi}^2)-t/2<b<\max(\overline{\Phi}^1,\overline{\Phi}^2)+t/2\}
\end{equation*}
The first step of the proof consists in rewriting the event $(E(x,x'))^c$ in terms of the variables ${X},{Y}$:
\begin{Lemma}
\begin{equation*}
(E(x,x'))^c=\{| Y| \geq |X-2b|-t \}
\end{equation*}
\end{Lemma}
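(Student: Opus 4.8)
The plan is to make the linear change of variables from $(\bar{\Phi}^1,\bar{\Phi}^2)$ to $(X,Y)$ and rewrite the defining inequalities of $(E(x,x'))^c$ directly. Since $X=\bar{\Phi}^1+\bar{\Phi}^2$ and $Y=\bar{\Phi}^2-\bar{\Phi}^1$, inverting gives $\bar{\Phi}^1=(X-Y)/2$ and $\bar{\Phi}^2=(X+Y)/2$. The key elementary observation is that, irrespective of the sign of $Y$,
\[
\min(\bar{\Phi}^1,\bar{\Phi}^2)=\frac{X-|Y|}{2},\qquad \max(\bar{\Phi}^1,\bar{\Phi}^2)=\frac{X+|Y|}{2},
\]
which is just a one-line case split on $\text{sign}(Y)$.

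Next I would substitute these into the description of $(E(x,x'))^c$ recalled immediately before the lemma, namely $\{\min(\bar{\Phi}^1,\bar{\Phi}^2)-t/2<b<\max(\bar{\Phi}^1,\bar{\Phi}^2)+t/2\}$, where $t=x-x'\ge 0$. This turns the event into
\[
\frac{X-|Y|}{2}-\frac{t}{2}<b<\frac{X+|Y|}{2}+\frac{t}{2}.
\]
Multiplying through by $2$ and subtracting $X$ from each term yields $-(|Y|+t)<2b-X<|Y|+t$, i.e.\ $|2b-X|<|Y|+t$, i.e.\ $|X-2b|<|Y|+t$, which rearranges to $|Y|>|X-2b|-t$.

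Finally I would note the only subtlety: the rearrangement above produces a strict inequality $\{|Y|>|X-2b|-t\}$, whereas the lemma states the weak version $\{|Y|\ge|X-2b|-t\}$; since $(X,Y)$ is jointly continuous, the boundary event $\{|Y|=|X-2b|-t\}$ is null, so the two events coincide up to a set of probability zero, which is all that is needed for the probability estimates that follow. There is no real obstacle here — the argument is essentially the $\min/\max$ identity plus a one-line algebraic manipulation, with the strict-versus-weak bookkeeping being harmless.
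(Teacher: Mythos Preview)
Your proposal is correct and follows essentially the same route as the paper: both arguments rewrite the event in the $(X,Y)$ coordinates and reduce it to $|X-2b|<|Y|+t$; the paper does the case split on $\text{sign}(Y)$ explicitly, whereas you package it into the identity $\min(\bar\Phi^1,\bar\Phi^2)=(X-|Y|)/2$, $\max(\bar\Phi^1,\bar\Phi^2)=(X+|Y|)/2$, which is a bit cleaner. Your remark on strict versus weak inequality is apt (the paper's own chain of equalities silently switches from strict to weak at the last step), and your justification via the null boundary set is exactly right.
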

\begin{proof}
\begin{align*}
(E(x,x'))^c=&\{\min(\overline{\Phi}^1,\overline{\Phi}^2)-t/2<b<\max(\overline{\Phi}^1,\overline{\Phi}^2)+t/2\}\\
=&\{\overline{\Phi}^1-t/2<b<\overline{\Phi}^2+t/2,	 \overline{\Phi}^1\leq\overline{\Phi}^2\}\:\cup\: \{\overline{\Phi}^2-t/2<b<\overline{\Phi}^1+t/2,Y\leq 0,	 \overline{\Phi}^2\leq\overline{\Phi}^1\}\\
=&\{2\overline{\Phi}^1-t<2b<2\overline{\Phi}^2+t,	 \overline{\Phi}^1\leq\overline{\Phi}^2\}\:\cup\: \{2\overline{\Phi}^2-t<2b<2\overline{\Phi}^1+t,	 \overline{\Phi}^2\leq\overline{\Phi}^1\}\\
=&\{X-Y-t<2b<X+Y+t,Y\geq0\}\:\cup\: \{X+Y-t<2b<X-Y+t,Y\leq0\}\\
=&\{(X-2b)-|Y|-t<0<(X-2b)+|Y|+t\}\\
=&\{|Y|\geq(X-2b-t)\}\cap\{|Y|\geq(2b-X-t)\}\\
=&\{|Y|\geq |X-2b|-t\}
\end{align*}	
\end{proof}
For any $b$ and $t\geq0$, let $S(t)=\{x,y: |y|\geq |x|-t\}$, and for any real $x$, let $S(t,y)=\{x:|y|\geq |x|-t\}$. Note $S(t,y)$ is symmetric and convex in $x$ for all $y$.
Using the lemma, we obtain:
\begin{align}
\pr((E)^c(x,x'))&=\frac{1}{2\pi \sigma_x \sigma_y\sqrt{1-C^2}} \int_{S(t)} \exp(-\frac{1}{2(1-C^2)}(\frac{(x-\mu_1-\mu_2+2b)^2}{\sigma_x^2} + \frac{(y-\mu_2+\mu_1)^2}{ \sigma_y^2}\notag \\
& -2C \frac{(x-\mu_1-\mu_2+2b) (y+\mu_2-\mu_1)}{\sigma_x\sigma_y})) dx dy\notag\\
&=\frac{1}{2\pi \sigma_x \sigma_y\sqrt{1-C^2}} \int_{y} \exp(-\frac{1}{2(1-C^2)}\frac{(y-\mu_2+\mu_1)^2}{ \sigma_y^2})
\:g(y) dy\label{AndersonFirstStep}
\end{align}
where:
\begin{align*}
g(y)= \int_{x\in S(t,y)} \exp(-\frac{1}{2(1-C^2)}(\frac{(x-\mu_1-\mu_2+2b)^2}{\sigma_x^2} -2C \frac{(x-\mu_1-\mu_2+2b) (y-\mu_2+\mu_1)}{\sigma_x\sigma_y})) dx
\end{align*}
Let $\tilde{x_b}=\frac{(x-\mu_1-\mu_2+2b)}{\sigma_x} $ and $\tilde{y}= \frac{(y-\mu_2+\mu_1)}{ \sigma_y}$.
Then:
\begin{align*}
g(y)=& \: \exp(\frac{C^2}{2(1-C^2)} \tilde{y}^2)   \int_{x\in S(t,y)} \exp(-\frac{1}{2(1-C^2)}(\tilde{x_b}-C \tilde{y})^2 ) dx
\end{align*}
Now, $\tilde{x_b}-C \tilde{y}=\frac{x-\mu_1-\mu_2+2b - \frac{C \sigma_x (y-\mu_2+\mu_1)}{\sigma_y}}{\sigma_x}$.
Recall Anderson's inequality~\cite{dudley1999ucl}: let $\gamma$ be a centered Gaussian measure on $\mathbb{R}^k$, and $S$ be a convex, symmetric subset of $\mathbb{R}^k$. Then, for all $z$, $\gamma(S)\geq \gamma(S+z)$. Since $S(t,y)$ is a convex symmetric subset, by setting $2b=\mu_1+\mu_2+\frac{C \sigma_x (y-\mu_2+\mu_1)}{\sigma_y}$, it follows that
\begin{align*}
g(y)\leq \: \exp(\frac{C^2}{2(1-C^2)}\tilde{y}^2)   \int_{x\in S(t,y)} \exp(-\frac{1}{2\sigma_x^2(1-C^2)}x^2)dx
\end{align*}

Injecting that bound in equation (\ref{AndersonFirstStep}), we obtain:
\begin{align*}
\pr((E)^c(x,x')) \leq &\frac{1}{2\pi \sigma_x \sigma_y\sqrt{1-C^2}} \int_{y} \exp(-\frac{1}{2(1-C^2)}\frac{(y-\mu_2+\mu_1)^2}{ \sigma_y^2}) \\
&\Big( \exp(\frac{C^2}{2(1-C^2)} \frac{(y-\mu_2+\mu_1)^2}{ \sigma_y^2})   \int_{x\in S(t,y)} \exp(-\frac{1}{2\sigma_x^2(1-C^2)}x^2 ) dx  \Big ) dy\\
\leq & \frac{1}{2\pi \sigma_x \sigma_y\sqrt{1-C^2}} \int_{S(t)} \exp(-\frac{1}{2(1-C^2)}(\frac{x^2}{\sigma_x^2} +(1-C^2) \frac{(y-\mu_2+\mu_1)^2}{ \sigma_y^2})) dx dy
\end{align*}
Finally, note that the triangular inequality, for any $\alpha$ we have $S(t)\subset S_\alpha(t)\eqdef\{(x,y): |y-\alpha|\geq|x|-t-|\alpha|\}$.
We obtain:
\begin{align*}
\pr((E)^c(x,x')) \leq & \frac{1}{2\pi \sigma_x \sigma_y\sqrt{1-C^2}} \int_{S_{\mu_2-\mu_1}(t)} \exp(-\frac{1}{2(1-C^2)}(\frac{x^2}{\sigma_x^2} +(1-C^2) \frac{(y-\mu_2+\mu_1)^2}{ \sigma_y^2})) dx dy\\
\leq & \frac{1}{2\pi \sigma_x \sigma_y\sqrt{1-C^2}} \int_{S(t+|\mu_2-\mu_1|)} \exp(-\frac{1}{2(1-C^2)}(\frac{x^2}{\sigma_x^2} + (1-C^2)\frac{y^2}{ \sigma_y^2}))dx dy
\end{align*}
where the second inequality follows from a simple change of variable.
Let $t'=t+|\mu_2-\mu_1|$
Finally, we decompose $S(t')$ as the union of two sets: $S(t)=S_\text{int}(T)\cup S_\text{out}(t)$, where:
\begin{align*}
S_\text{int}(t')=&\{(X,Y) : |X|<t'\}\\
S_\text{out}(t')=&\{(X,Y) : |X| \geq t' \text{ and } |Y| \geq (|X|-t')\},
\end{align*}
and note that $S_\text{int}(t')\cap S_\text{out}(t')=\emptyset$. We have:
\begin{align*}
\pr(S_\text{int}(t'))\leq &\frac{2t'}{\sqrt{2 \pi(1-C^2) } \sigma_x}
\end{align*}
and, by symmetry of $S_\text{out}(t')$ in $X$ and $Y$,
\begin{align*}
\pr(S_\text{out}(t'))=&4 \pr(\{(x,y) : x\geq t, y\geq0, y \geq x-t\})\\
 = & \frac{2}{\pi \sigma_x \sigma_y\sqrt{1-C^2}} \int_{\{(x,y) : x\geq t, y\geq0, y \geq x-t\}} \:\: \exp(-\frac{1}{2(1-C^2)}(\frac{x^2}{\sigma_x^2} + (1-C^2)\frac{y^2}{ \sigma_y^2}))\:dx dy
\end{align*}
Using the change of variables $(x',y')=(\frac{x-t}{\sqrt{1-C^2}\sigma_x},\frac{y}{\sigma_y})$,we get:
\begin{align*}
\pr(S_\text{out}(t')) = & \frac{2}{\pi} \int_{\{(x',y') : x'>0, y'>0, y' \geq \frac{\sigma_x\:\sqrt{1-C^2}}{\sigma_y}x'\}} \Big( \exp(-(x'+\frac{t'}{\sqrt{1-C^2}\:\sigma_x})^2 - y'^2) \Big)dx' dy'
\end{align*}
Since $(x'+\frac{t'}{\sqrt{1-C^2}\:\sigma_x})^2 \geq x'^2$, it follows that:
\begin{align*}
\pr(S_\text{out}(t')) \leq & \frac{2}{\pi}  \int_{\{(x',y') : x'>0, y'>0, y' \geq \frac{\sigma_x\:\sqrt{1-C^2}}{\sigma_y}x'\}} \Big( \exp(-x'^2 - y'^2) \Big)dx dy
\end{align*}
By using a radial change of variables $(x',y')=(r\cos(\theta),r\sin(\theta))$ we can compute exactly the expression above, and find:
\begin{align*}
\pr(S_\text{out}(t')) \leq & \frac{2}{\pi} \int_{\{(r,\theta) : r>0, \arctan(\frac{\sigma_x\sqrt{1-C^2}}{\sigma_y}) \leq \theta \leq \frac{\pi}{2}\}}  \exp(-r^2) rdr d\theta\\
=&\frac{1}{\pi} \arctan(\frac{\sigma_y}{\sigma_x\:\sqrt{1-C^2}})
\end{align*}

\begin{align}\label{generalcouple}
\pr((E)^c(x,x')) \leq \Bigg (\frac{1}{\pi} \arctan(\frac{\sigma_y}{\sigma_x \sqrt{1-C^2}})+\sqrt{\frac{2}{\pi (1-C^2)}} \frac{|\mu_2-\mu_1|}{\sigma_x}\Bigg)+\sqrt{\frac{2}{\pi (1-C^2)}} \frac{t}{\sigma_x}
\end{align}
which gives us the desired bounds on $(a,b)$.
\end{proof}

\section{Maximum weighted independent sets}\label{section:MWIS}

\subsection{Cavity expansion and the algorithm}\label{section:algorithm}
In this section, we show how the correlation decay framework also applies to MWIS problems and prove theorems $3$, $4$, and $5$. There are additional challenges in achieving this goal. First, the bounded costs assumption required for the results of section \ref{sec:corrdecay}  does not hold for constrained optimization problems, as the underlying problem has infinite costs. Second, the coupling technique of section \ref{sec:coupling} is not readily applicable for MWIS. We therefore develop a different approach.

As for unconstrained optimization problems, we follow three steps. First, we detail the Cavity Expansion algorithm. Second, we establish the correlation decay property. Finally, we show that the correlation decay property implies that near-optimal, decentralized optimization can be performed in polynomial time.

Consider a general weighted graph $\Graph=(V,E,W)$, where $(V,E)$ is a graph whose nodes are equipped with arbitrary non-negative weights $W_i, i\in V$; no probabilistic assumption on $W_i$ is adopted yet. Note that for Independent Sets problems, we have $ J_{\Graph}=W(I^*)$, and for any $(i_1,\ldots,i_d)$, $J_{\Graph,(i_1,\ldots,i_d)}(\bold 0)=J_{\Graph\setminus\{i_1,\ldots,i_d\}}$, where $\Graph\setminus\{i_1,\ldots,i_d\}$ is the subgraph induced by nodes $V\setminus\{i_1,\ldots,i_d\}$. Consider  a given node $i\in V$ and let $N(i)=\{i_1,\ldots,i_d\}$. From Theorem \ref{theorem:CavityThm}, we have
\begin{align}
B_{\Graph,i}=J_{\Graph,i}(1)-J_{\Graph,i}(0)=W_i+\sum_l \mu_{i \leftarrow i_l}(1,B_{\Graph(i,l),i_l})
\end{align}
Recall that for MWIS, we have $\Phi_e(x,y)=-\infty$ for $(x,y)=(1,1)$ and $\Phi_e(x,y)=0$, otherwise. Therefore, by definition of $\mu_{i\leftarrow i_l}$, we have
\begin{align*}
\mu_{i \leftarrow i_l}(1,B_{\Graph(i,l),i_l})=&\max(-\infty+B_{\Graph(i,l),i_l},0)-\max(B_{\Graph(i,l),i_l},0)=-\max(B_{\Graph(i,l),i_l},0)
\end{align*}
Thus, \begin{align*}
B_{\Graph,i}=W_i-\sum_{l=1}^d \max(B_{\Graph(i,l),i_l},0)
\end{align*}

Let $l\le d$; recall the definition of $\Graph(i,l)$: $\Graph(i,l)$ is the network $\Graph\setminus\{i\}$, where the potential functions of the neighbors of $i$ have been modified as follows:
\begin{itemize}
\item for $v\in \{i_1,\ldots,i_{l-1}$, $\phi'_v(0)=\phi_v(0)+\phi_{i,v}(1,0)=0$, and $\phi'_v(1)=W_v+\phi_{i,v}(1,1)=W_v-\infty=-\infty$. Since the new weight of $v$ is $-\infty$, it is equivalent to removing this node from the graph.
\item for $v\in \{i_{l+1},\ldots,i_d$,$\phi'_v(0)=\phi_v(0)+\phi_{i,v}(0,0)=0$, and $\phi'_v(1)=W_v+\phi_{i,v}(0,1)=W_v$
\end{itemize}

We thus observe that in $\Graph(i,l)$, the nodes $\{i,i_1,\ldots,i_{l-1}\}$ can be removed, while the weights of nodes $\{i_{l+1},\ldots,i_d\}$ are unchanged; equivalently, we have $\Graph(i,l)=\Graph\setminus\{i,i_1,\ldots,i_{l-1}\}$. Therefore, we obtain
 \begin{align*}
B_{\Graph,i}=W_i-\sum_{l=1}^d \max(B_{\Graph\setminus\{i,i_1,\ldots,i_{l-1}\}},0)
\end{align*}

We further modify the cavity recursion by the following change of variable: for any graph $\Graph$ and node $i$, let $C_\Graph(i)= \max(B_{\Graph,i},0)$; note we have $C_\Graph(i)=\max(J_{\Graph,i}(1),J_{\Graph,i}(0))-J_{\Graph,i}(0)=J_\Graph-J_{\Graph\setminus\{i\}}$. The variables $C$ will be called \emph{cavities}. It turns out that in the case of IS problems, working with cavities $C$ is more convenient  than with cavities $B$. We obtain the cavity recursion for MWIS:
\begin{Proposition}\label{prop:cavityrecursion}
For any $i\in V$, let $N(i)=\{i_1,\ldots,i_d\}$. Then
\begin{align}\label{eq:cavityrecursion}
C_{\Graph}(i)=\max\Big(0,W_i-\sum_{1\le l\le d}C_{\Graphsans{i,i_1,\ldots,i_{l-1}}}(i_l)\Big),
\end{align}
where $\sum_{1\le l\le d}=0$ when $N(i)=\emptyset$.
If $W_i-\sum_{1\le l\le d}C_{\Graph\setminus\{i,i_1,\ldots,i_{l-1}\}}(i_l)>0$, namely $C_{\Graph}(i)>0$, then every largest weight independent set must contain  $i$. Similarly
if $W_i-\sum_{1\le l\le d}C_{\Graph\setminus\{i,i_1,\ldots,i_{l-1}\}}(i_l)<0$, implying $C_{\Graph}(i)=0$, then every largest weight independent set does not contain  $i$.
\end{Proposition}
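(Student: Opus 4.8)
The statement is essentially a specialization of the general Cavity Recursion to the hard independent-set potentials, followed by the change of variables $C_\Graph(i)=\max(B_{\Graph,i},0)$; the paragraphs preceding the proposition already contain all the ingredients, and the plan is to assemble them cleanly. First I would apply Theorem~\ref{theorem:CavityThm} with $\chi=\{0,1\}$, $\Phi_i(1)-\Phi_i(0)=W_i$ and $N(i)=\{i_1,\ldots,i_d\}$, obtaining $B_{\Graph,i}=W_i+\sum_{l=1}^d\mu_{i\leftarrow i_l}(1,B_{\Graph(i,l),i_l})$. Next I would evaluate the partial cavity from~\eqref{MuDefine}: since $\Phi_e(1,1)=-\infty$ while $\Phi_e(0,0)=\Phi_e(0,1)=\Phi_e(1,0)=0$, and every $B_{\Graph(i,l),i_l}$ is a (finite) real number, one has $\mu_{i\leftarrow i_l}(1,z)=\max(0,-\infty+z)-\max(0,z)=-\max(0,z)$, so that $B_{\Graph,i}=W_i-\sum_{l=1}^d\max(B_{\Graph(i,l),i_l},0)$.

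The second step is to identify $\Graph(i,l)$ with the vertex-deleted subgraph $\Graphsans{i,i_1,\ldots,i_{l-1}}$. By construction~\eqref{eq:GraphModified}, $\Graph(i,l)$ is $\Graphsans{i}$ with the potentials of $i$'s neighbors modified: for $v\in\{i_1,\ldots,i_{l-1}\}$ we get $\Phi'_v(1)=W_v+\Phi_{i,v}(1,1)=-\infty$, while for $v\in\{i_{l+1},\ldots,i_d\}$ we get $\Phi'_v(1)=W_v+\Phi_{i,v}(0,1)=W_v$, i.e.\ unchanged. I would then argue that a node carrying potential $\Phi'_v(1)=-\infty$ takes value $0$ in every finite-value assignment, and that once $x_v=0$ each edge incident to $v$ contributes $\Phi_{v,\cdot}(0,\cdot)=0$ and hence imposes no constraint on the remaining variables; consequently the optimal values $J_{\Graph(i,l),i_l}(\cdot)$, and therefore $B_{\Graph(i,l),i_l}$, agree with the corresponding quantities on $\Graphsans{i,i_1,\ldots,i_{l-1}}$. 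Substituting gives $B_{\Graph,i}=W_i-\sum_{l=1}^d\max\big(B_{\Graphsans{i,i_1,\ldots,i_{l-1}},i_l},0\big)$.

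Finally I would pass to the cavities $C$. For MWIS one has $J_{\Graph,i}(0)=J_{\Graphsans{i}}$, so $C_\Graph(i)=\max(B_{\Graph,i},0)=\max\big(J_{\Graph,i}(1),J_{\Graph,i}(0)\big)-J_{\Graphsans{i}}=J_\Graph-J_{\Graphsans{i}}\ge 0$; applying $\max(\cdot,0)$ to both sides of the previous identity and using $\max(B_{\Graph',i_l},0)=C_{\Graph'}(i_l)$ with $\Graph'=\Graphsans{i,i_1,\ldots,i_{l-1}}$ yields~\eqref{eq:cavityrecursion}. For the structural claim I would note that $W_i-\sum_{1\le l\le d}C_{\Graphsans{i,i_1,\ldots,i_{l-1}}}(i_l)>0$ is precisely $B_{\Graph,i}>0$, i.e.\ $J_{\Graph,i}(1)>J_{\Graph,i}(0)$; since any independent set avoiding $i$ has weight at most $J_{\Graph,i}(0)<J_{\Graph,i}(1)=J_\Graph$, it cannot be a largest-weight independent set, so every such set must contain $i$, and the case ``$<0$'' is symmetric. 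The only genuinely non-bookkeeping point --- which I would isolate as a short claim if a referee asked for it --- is the equivalence between suppressing a node through a $-\infty$ potential and deleting it outright; everything else is direct substitution.
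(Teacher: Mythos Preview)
Your proposal is correct and follows essentially the same route as the paper: the derivation preceding Proposition~\ref{prop:cavityrecursion} in the paper applies Theorem~\ref{theorem:CavityThm} with the independent-set potentials, evaluates $\mu_{i\leftarrow i_l}(1,z)=-\max(0,z)$, identifies $\Graph(i,l)$ with $\Graphsans{i,i_1,\ldots,i_{l-1}}$ via the $-\infty$ potential argument, and then passes to $C_\Graph(i)=\max(B_{\Graph,i},0)$---exactly your steps, in the same order. Your explicit isolation of the ``$-\infty$ potential $\Leftrightarrow$ vertex deletion'' claim and your justification of the structural statement via $J_{\Graph,i}(1)>J_{\Graph,i}(0)$ are slightly more careful than the paper, but the argument is the same.
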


\remark The proposition leaves out a "fuzzy" case $W_i-\sum_{1\le l\le d}C_{\Graph\setminus\{i,i_1,\ldots,i_{l-1}\}}(i_l)=0$. This will not be a problem in our setting since, due to the continuity of the weight distribution, the probability of this event is zero. Modulo this tie, the event $C_{\Graph}(i)>0\:(C_{\Graph}(i)=0)$ determine  whether $i$ must (must not) belong to the largest weighted independent set.

Using the special form of the cavity recursion \eqref{eq:cavityrecursion}, the cavity expansion algorithm for MWIS is very similar as the one defined in section \ref{sec:CE}. For any induced subgraph $\gH$ of $\Graph$ and node $i$, let $C^-_\gH(i,r)=\max(0,\text{CE}[\gH,i,r])$ with boundary condition $\text{CE}[\gH,i,0]=0$, and let $C^+_\gH(i,r)$ be the same quantity for the boundary condition $\text{CE}[\gH,i,0]=W_i$. Alternatively, $C^-$ and $C^+$ can be defined by the following recursions:
\begin{align}\label{eq:cavityrecursionapproximate}
C_{\gH}^-(i,r)=\left\{
                \begin{array}{ll}
                  0, & \hbox{$r=0$;} \\
                  \max\Big(0,W_i-\sum_{1\le l\le d}C^-_{\gH\setminus\{i,i_1,\ldots,i_{l-1}\}}(i_l,r-1)\Big), & \hbox{$r\ge 1$.}
                \end{array}
              \right.
\end{align}
\begin{align}\label{eq:cavityrecursionapproximate2}
C_{\gH}^+(i,r)=\left\{
                \begin{array}{ll}
                  W_i, & \hbox{$r=0$;} \\
                  \max\Big(0,W_i-\sum_{1\le l\le d}C^+_{\gH\setminus\{i,i_1,\ldots,i_{l-1}\}}(i_l,r-1)\Big), & \hbox{$r\ge 1$.}
                \end{array}
              \right.
\end{align}

The two boundaries condition were chosen so that $C_{\gH}^-(i,r)$ and $C_{\gH}^+(i,r)$ provide valid bounds on the true cavities $C_{\gH}(i)$, as detailed by the following Lemma.

\begin{Lemma}\label{lemma:Bbounds}
For every even $r$
\begin{align*}
C_{\gH}^-(i,r)\le C_{\gH}(i)\le C_{\gH}^+(i,r),
\end{align*}
and for every odd $r$
\begin{align*}
C_{\gH}^+(i,r)\le C_{\gH}(i)\le C_{\gH}^-(i,r),
\end{align*}
\end{Lemma}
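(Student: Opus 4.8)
The plan is to prove both chains of inequalities simultaneously by a single induction on $r$, where at each stage the inductive hypothesis is quantified over \emph{all} induced subgraphs $\gH$ of $\Graph$ and all nodes $i$ of $\gH$ at once. This global quantification is essential: the exact recursion of Proposition~\ref{prop:cavityrecursion} and the approximate recursions \eqref{eq:cavityrecursionapproximate}--\eqref{eq:cavityrecursionapproximate2} express the cavity at $(\gH,i)$ in terms of cavities at the strictly smaller graphs $\gH\setminus\{i,i_1,\ldots,i_{l-1}\}$ evaluated at the nodes $i_l$, so the hypothesis must already cover these.

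For the base case $r=0$ I would note $C_\gH(i)=\max(B_{\gH,i},0)\ge 0=C^-_\gH(i,0)$, and $C_\gH(i)=J_\gH-J_{\gH\setminus\{i\}}\le W_i=C^+_\gH(i,0)$ — the latter because deleting $i$ from a maximum-weight independent set of $\gH$ leaves an independent set of $\gH\setminus\{i\}$ of weight at least $J_\gH-W_i$ (equivalently, it follows directly from \eqref{eq:cavityrecursion} together with non-negativity of the cavities). This establishes the even case for $r=0$. The inductive step rests entirely on one elementary fact: the map $(c_1,\ldots,c_d)\mapsto\max\bigl(0,\,W_i-\sum_{l=1}^d c_l\bigr)$ is non-increasing in each argument. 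Suppose the lemma holds at level $r-1$. If $r$ is even, then $r-1$ is odd, so for each $l$ the hypothesis applied to $\gH\setminus\{i,i_1,\ldots,i_{l-1}\}$ and $i_l$ gives $C^+_{\gH\setminus\{i,i_1,\ldots,i_{l-1}\}}(i_l,r-1)\le C_{\gH\setminus\{i,i_1,\ldots,i_{l-1}\}}(i_l)\le C^-_{\gH\setminus\{i,i_1,\ldots,i_{l-1}\}}(i_l,r-1)$. Substituting the rightmost bounds into \eqref{eq:cavityrecursion} and using monotonicity yields $C_\gH(i)\ge C^-_\gH(i,r)$; substituting the leftmost bounds yields $C_\gH(i)\le C^+_\gH(i,r)$. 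The odd case is the mirror image: now $r-1$ is even, the sandwich on the subgraph cavities has $C^-$ on the left and $C^+$ on the right, and after the antitone recursion map the roles of $C^+$ and $C^-$ at level $r$ are exchanged, giving $C^+_\gH(i,r)\le C_\gH(i)\le C^-_\gH(i,r)$. Since every $r\ge 1$ has $r-1\ge 0$, this induction together with the base case completes the proof.

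I do not expect a genuine obstacle; the proof is short. The only points demanding care are (i) setting the induction up over the whole family of induced subgraphs and nodes rather than a fixed pair, so that the recursion step is legitimate, and (ii) keeping straight, through the parity flip forced by the non-increasing recursion map, which boundary condition ($C^+$ or $C^-$) is the over-estimate and which the under-estimate at a given $r$. The single ``semantic'' ingredient is the two-sided bound $0\le C_\gH(i)\le W_i$ used in the base case; the rest is monotonicity bookkeeping.
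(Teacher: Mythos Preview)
Your proof is correct and follows exactly the same approach as the paper: induction on $r$, with the base case $r=0$ holding by the definitions of $C^-,C^+$ (together with the observation $0\le C_\gH(i)\le W_i$), and the inductive step driven by the fact that $x\mapsto\max(0,W-x)$ is non-increasing applied to the recursion \eqref{eq:cavityrecursion}. The paper's proof is a one-line sketch of precisely this argument; you have merely spelled out the details (the global quantification over subgraphs, the parity flip) that the paper leaves implicit.
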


\begin{proof}
The proof is by induction in $r$. The assertion holds by definition of $C^-,C^+$ for $t=0$. The induction
follows from (\ref{eq:cavityrecursion}), definitions of $C^-,C^+$ and
since the function $x\rightarrow\max(0,W-x)$ is non-increasing.
\end{proof}


We now describe our algorithm for producing a large weighted independent set. Our algorithm runs in two stages. Fix $\epsilon>0$. In the first stage we take an input graph $\Graph=(V,E)$ and delete every node (and incident edges) with probability $\epsilon^2/16$, independently for all nodes. We denote the resulting (random) subgraph by ${\Graph(\epsilon)}$. In the second stage we compute $C_{{\Graph(\epsilon)}}^-(i,r)$ for every node $i$ for the graph ${\Graph(\epsilon)}$ for some target even number of steps $r$. We set $\I(r,\epsilon)=\{i:C_{{\Graph(\epsilon)}}^-(i,r)>0\}$. Let $I^*_\epsilon$ be the largest weighted independent set of ${\Graph(\epsilon)}$.
\begin{Lemma}\label{lem:feasibleIS}
$\I(r,\epsilon)$ is an independent set.
\end{Lemma}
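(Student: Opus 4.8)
The plan is to combine the sandwich bound of Lemma~\ref{lemma:Bbounds} with the structural consequence of a strictly positive cavity recorded in Proposition~\ref{prop:cavityrecursion}, using crucially that the algorithm runs for an \emph{even} number of steps $r$. Fix any even $r$ (this is what the algorithm uses) and suppose, towards a contradiction, that $\I(r,\epsilon)$ is not independent: then there exist two nodes $i,j$ of ${\Graph(\epsilon)}$ with $(i,j)$ an edge of ${\Graph(\epsilon)}$, and $C^-_{{\Graph(\epsilon)}}(i,r)>0$ and $C^-_{{\Graph(\epsilon)}}(j,r)>0$.

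Since $r$ is even, Lemma~\ref{lemma:Bbounds} applied to the graph ${\Graph(\epsilon)}$ gives $C_{{\Graph(\epsilon)}}(i)\ge C^-_{{\Graph(\epsilon)}}(i,r)>0$, and likewise $C_{{\Graph(\epsilon)}}(j)>0$. Recalling that $C_{\gH}(k)=J_{\gH}-J_{\gH\setminus\{k\}}$, a strictly positive cavity at a node $k$ means $J_{{\Graph(\epsilon)}}>J_{{\Graph(\epsilon)}\setminus\{k\}}$; hence every maximum weight independent set of ${\Graph(\epsilon)}$ must contain $k$, since an independent set of ${\Graph(\epsilon)}$ not containing $k$ would be an independent set of ${\Graph(\epsilon)}\setminus\{k\}$ of weight $J_{{\Graph(\epsilon)}}$, contradicting the maximality defining $J_{{\Graph(\epsilon)}\setminus\{k\}}$. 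This is precisely the ``must contain'' conclusion of Proposition~\ref{prop:cavityrecursion}. Applying it to both $i$ and $j$, every maximum weight independent set of ${\Graph(\epsilon)}$ contains both $i$ and $j$; but this is impossible because $(i,j)$ is an edge. The contradiction shows that $\I(r,\epsilon)$ contains no pair of adjacent nodes, i.e. it is an independent set.

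I do not anticipate any real obstacle here, since the substantive work has already been front-loaded into Lemma~\ref{lemma:Bbounds} (which in turn just uses the monotonicity of $x\mapsto\max(0,W-x)$ together with the exact recursion \eqref{eq:cavityrecursion}) and into the elementary implication ``$C_{\gH}(k)>0\Rightarrow k$ belongs to every maximum weight independent set of $\gH$''. The one point that deserves a sentence of care is the parity of $r$: for odd $r$ the bounds in Lemma~\ref{lemma:Bbounds} are reversed, so $C^-_{{\Graph(\epsilon)}}(i,r)>0$ would no longer force $C_{{\Graph(\epsilon)}}(i)>0$, which is exactly why the second stage of the algorithm is specified with an even number of steps. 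Note also that this argument is entirely deterministic — it uses neither the continuity of the weight distribution nor the ``fuzzy case'' remark following Proposition~\ref{prop:cavityrecursion}, because only the easy direction of that proposition is invoked.
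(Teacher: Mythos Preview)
Your proof is correct and follows essentially the same approach as the paper: both use Lemma~\ref{lemma:Bbounds} (with $r$ even) to deduce $C_{{\Graph(\epsilon)}}(i)>0$ from $C^-_{{\Graph(\epsilon)}}(i,r)>0$, and then invoke Proposition~\ref{prop:cavityrecursion} to conclude that every such $i$ lies in $I^*_\epsilon$. The only cosmetic difference is that the paper phrases this as the containment $\I(r,\epsilon)\subset I^*_\epsilon$ and stops, whereas you unpack the same containment via a contradiction on a hypothetical adjacent pair.
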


\begin{proof}
By Lemma~\ref{lemma:Bbounds}, if $C_{{\Graph(\epsilon)}}^-(i,r)>0$ then $C_{{\Graph(\epsilon)}}>0$, and therefore $\I\subset I^*_\epsilon$. Thus our algorithm produces an independent set in ${\Graph(\epsilon)}$ and therefore in $\Graph$.
\end{proof}

We finish this section by mentioning that due to Proposition~\ref{prop:Complexity}, the complexity of running both stages of the algorithm is $O(nr\Delta^r)$. As it will be apparent from the analysis, we could take $C_{{\Graph(\epsilon)}}^+$ instead of $C_{{\Graph(\epsilon)}}^-$ and arrive at the same result using an odd number $r$.

\subsection{Proof of Theorem~\ref{theorem:ISMainResult}}\label{section:proofmainresult}

\subsubsection{Correlation decay property}

The main bulk of the proof of Theorem~\ref{theorem:ISMainResult} will be to show that $\I(r,\epsilon)$ is close to $I^*_\epsilon$ in the set-theoretic sense. We will use this to show that $W(\I(r,\epsilon))$ is close to $W(I^*_\epsilon)$. It will be then straightforward to show that $W(I^*_\epsilon)$ is close to $W(I^*)$, which will finally give us the desired result, theorem $5$. The key step therefore consists in proving that the correlation decay property holds. It is the object of our next proposition.

First, we introduce for any arbitrary induced subgraph $\gH$ of ${\Graph(\epsilon)}$, and any node $i$ in $\gH$, introduce $M_{\gH}(i)=\E[\exp(-C_{\gH}(i))],M^-_{\gH}(i,r)=\E[\exp(-C^-_{\gH}(i,r))],M^+_{\gH}(i,r)=\E[\exp(-C^+_{\gH}(i,r))]$.

\begin{Proposition}\label{prop:BonusBounded}
Let ${\Graph(\epsilon)}=(V_\epsilon,E_\epsilon)$ be the graph obtained from the original underlying graph as a result of the first phase of the algorithm (namely deleting every node with probability $\delta=\epsilon^2/16$ independently for all nodes).  Then, for every node $i$ in ${\Graph(\epsilon)}$ and every $r$
\begin{align}\label{eq:missclassify1}
\pr(C_{{\Graph(\epsilon)}}(i)=0,C^+_{{\Graph(\epsilon)}}(i,2t)>0)\le  3(1-\epsilon^2/16)^{2r},
\end{align}
and
\begin{align}\label{eq:missclassify2}
\pr(C_{{\Graph(\epsilon)}}(i)>0,C^-_{{\Graph(\epsilon)}}(i,2t)=0)\le  3(1-\epsilon^2/16)^{2r}.
\end{align}
\end{Proposition}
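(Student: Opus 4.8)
\emph{Overview of the plan.} I will prove \eqref{eq:missclassify1} in detail; \eqref{eq:missclassify2} follows by the same argument with $C^+$ replaced by $C^-$. Write $\delta_0=\epsilon^2/16$ for the deletion probability. All estimates are first carried out conditionally on the realization of ${\Graph(\epsilon)}$ (conditionally on which the weights $W_v$, $v\in V_\epsilon$, are still i.i.d.\ $\mathrm{Exp}(1)$), and the average over the deletions is taken only at the very last step. The case $r=0$ is trivial since the bound is $3\ge1$, so assume $r\ge1$.

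\emph{Step 1: a one-step reduction.} Fix a node $i$ with neighbours $i_1,\dots,i_d$ ($d\le\Delta$) in an induced subgraph $\gH$ of ${\Graph(\epsilon)}$, let $\gH_l=\gH\setminus\{i,i_1,\dots,i_{l-1}\}$, and set $S=\sum_{l=1}^{d} C_{\gH_l}(i_l)$ and $S^+=\sum_{l=1}^{d} C^+_{\gH_l}(i_l,2r-1)$. By \eqref{eq:cavityrecursion} and \eqref{eq:cavityrecursionapproximate2} we have $C_{\gH}(i)=\max(0,W_i-S)$ and $C^+_{\gH}(i,2r)=\max(0,W_i-S^+)$, so $\{C_{\gH}(i)=0\}=\{W_i\le S\}$ and $\{C^+_{\gH}(i,2r)>0\}=\{W_i>S^+\}$; since $2r-1$ is odd, Lemma~\ref{lemma:Bbounds} gives $S^+\le S$. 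As $W_i\sim\mathrm{Exp}(1)$ is independent of $(S,S^+)$ (these involve only weights $\ne W_i$),
\[
\pr\big(C_{\gH}(i)=0,\ C^+_{\gH}(i,2r)>0\big)=\E\big[e^{-S^+}-e^{-S}\big].
\]
Using the elementary telescoping bound $e^{-\sum_l a_l}-e^{-\sum_l b_l}\le\sum_l(e^{-a_l}-e^{-b_l})$ valid for $0\le a_l\le b_l$, this is at most $\sum_{l=1}^{d} p_{\gH_l}(i_l,2r-1)$, where $p_{\gH}(i,s):=\big|M^+_{\gH}(i,s)-M_{\gH}(i)\big|$.

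\emph{Step 2: a per-level contraction of the Laplace gaps.} The fact that singles out the $\mathrm{Exp}(1)$ weights is the exact identity $\E\,e^{-\max(0,W-T)}=1-\tfrac12 e^{-T}$ for $W\sim\mathrm{Exp}(1)$ and constant $T\ge0$. Applied to \eqref{eq:cavityrecursion} and \eqref{eq:cavityrecursionapproximate2} it gives, for every $\gH$, every node $i$ with neighbours $j_1,\dots,j_{d_i}$ in $\gH$, and every $s\ge1$,
\[
M^+_{\gH}(i,s)=1-\tfrac12\,\E\big[e^{-\sum_m C^+_{\gH_m}(j_m,s-1)}\big],\qquad M_{\gH}(i)=1-\tfrac12\,\E\big[e^{-\sum_m C_{\gH_m}(j_m)}\big],
\]
with $\gH_m=\gH\setminus\{i,j_1,\dots,j_{m-1}\}$. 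Subtracting, telescoping once more, and using Lemma~\ref{lemma:Bbounds} to check that the two differences have matching signs at every level (so no absolute value is lost), one obtains $p_{\gH}(i,s)\le\tfrac12\sum_{m=1}^{d_i}p_{\gH_m}(j_m,s-1)$ for all $s\ge1$, together with the base estimate $p_{\gH}(i,0)=\E[e^{-C_{\gH}(i)}]-\E[e^{-W_i}]\le1-\E[e^{-W_i}]=\tfrac12$ (using $0\le C_{\gH}(i)\le W_i$ and $C^+_{\gH}(i,0)=W_i$). Crucially, every node $j_m$ reached after the first step has $d_{j_m}\le\Delta-1$, because in $\gH_m$ its neighbour $i$ (its ``parent'') has been removed, and the construction $\gH\setminus\{i,j_1,\dots\}$ only ever deletes vertices as one descends.

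\emph{Step 3: unrolling along self-avoiding walks and averaging over deletions.} Iterating Step~2 down to level $0$ turns the bound of Step~1 into $\sum_{P}(\tfrac12)^{2r}$, the sum being over all paths $i=z_0,z_1,\dots,z_{2r}$ produced by the iteration (at each level one selects a neighbour of the current vertex in the current subgraph). Since the graph only shrinks — always losing the current vertex — as one descends, each such $P$ is a self-avoiding walk in ${\Graph(\epsilon)}$, so $z_1,\dots,z_{2r}$ are $2r$ distinct non-root vertices. Hence, conditionally on ${\Graph(\epsilon)}$, the probability in \eqref{eq:missclassify1} is at most $(\tfrac12)^{2r}$ times the number of self-avoiding walks of length $2r$ from $i$ in ${\Graph(\epsilon)}$. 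Averaging over the deletions, each length-$2r$ self-avoiding walk of $\Graph$ survives into ${\Graph(\epsilon)}$ with probability exactly $(1-\delta_0)^{2r}$ (its $2r$ non-root vertices are distinct and deleted independently), and $\Graph$ contains at most $\Delta(\Delta-1)^{2r-1}\le3\cdot2^{2r-1}$ of them from $i$. Therefore the probability in \eqref{eq:missclassify1} is at most $(\tfrac12)^{2r}\cdot3\cdot2^{2r-1}\cdot(1-\delta_0)^{2r}=\tfrac32(1-\delta_0)^{2r}\le3(1-\delta_0)^{2r}$. For \eqref{eq:missclassify2} the same steps apply verbatim with $C^-$ in place of $C^+$: in Step~1 one uses $\{C^-_{\gH}(i,2r)=0\}=\{W_i\le\sum_l C^-_{\gH_l}(i_l,2r-1)\}$, $\{C_{\gH}(i)>0\}=\{W_i>S\}$ and the odd-$r$ inequality $C^-\ge C$; in Step~2 the base case becomes $1-M_{\gH}(i)\le\tfrac12$ since $C^-_{\gH}(i,0)=0$.

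\emph{Expected main obstacle.} The two delicate points are the sign/parity bookkeeping in Steps~1--2 (one must apply the two telescoping inequalities in the correct direction, so that $p_{\gH}(i,s)$ genuinely contracts by a factor $\tfrac12$ per level rather than merely by $1$), and the observation that the computation paths of the recursion are honest self-avoiding walks — this is precisely what makes the branching factor $2^{2r}$ cancel the per-level factor $(\tfrac12)^{2r}$ and lets the node-deletion gain $(1-\delta_0)^{2r}$ emerge from the averaging. The hypothesis $\Delta\le3$ is used only through the walk count $\Delta(\Delta-1)^{2r-1}$ and the requirement $(\Delta-1)\cdot\tfrac12\le1$.
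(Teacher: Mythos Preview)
Your proof is correct and follows essentially the same idea as the paper's---both hinge on the exact Laplace identity $\E[e^{-\max(0,W-T)}]=1-\tfrac12 e^{-T}$ for $W\sim\mathrm{Exp}(1)$ and the resulting per-level contraction by a factor $\tfrac12$---but the bookkeeping is genuinely different. The paper compares the two extreme boundary conditions $C^{+}$ and $C^{-}$, tracks $e_r=\sup_{\gH',j}\big|M^{+}_{\gH'\cap\Graph(\epsilon)}(j,r)-M^{-}_{\gH'\cap\Graph(\epsilon)}(j,r)\big|$ via a \emph{max} over subgraphs in which $j$ has degree $\le 2$, and averages the random branching factor $d(\epsilon)/2$ (a binomial with mean $\le 1-\delta_0$) at every level of the recursion to obtain $e_r\le(1-\delta_0)e_{r-1}$. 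You instead compare $C^{+}$ (resp.\ $C^{-}$) directly to the true cavity $C$, sum rather than take a max, unroll the recursion into an explicit sum over computation paths which you identify as self-avoiding walks in $\Graph(\epsilon)$, and take the expectation over deletions only once at the very end via the survival probability $(1-\delta_0)^{2r}$ of a length-$2r$ walk.

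Your route has two small advantages: it cleanly separates the two randomness sources (weights first, deletions last), and it gives the slightly sharper constant $\tfrac32$ in place of $3$. The paper's max-based recursion, on the other hand, generalises more transparently to the higher-degree setting of Theorem~\ref{theorem:ISMainResult2}, where one wants a matrix contraction rather than a path count. One small point worth being explicit about in your Step~3: the number of computation paths of length $2r$ is \emph{at most} (not equal to) the number of self-avoiding walks in $\Graph(\epsilon)$, since sibling removals in the construction $\gH\setminus\{i,i_1,\dots,i_{l-1}\}$ can prune some walks; this only helps your bound.
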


\begin{proof}
Consider a subgraph $\gH$ of $\Graph$, node $i\in \gH$ with neighbors $\mathcal{N}_\gH(i)=\{i_1,\ldots,i_d\}$, and suppose for now that the number of neighbors of $i$ in $\Graph$ is less than $2$.

Examine the recursion (\ref{eq:cavityrecursion}) and observe that all the randomness in terms $C_{\gH\setminus\{i,i_1,\ldots,i_{l-1}\}}(i_l)$ comes from the subgraph $\gH\setminus\{i,i_1,\ldots,i_{l-1}\}$, and thus $W_j$ is independent from the vector \\$(C_{\gH\setminus\{i,i_1,\ldots,i_{l-1}\}}(i_l),  1\le l\le d)$.
A similar assertion applies when we replace $C_{\gH\setminus\{i,i_1,\ldots,i_{l-1}\}}(i_l)$ with \\$C^-_{\gH\setminus\{i,i_1,\ldots,i_{l-1}\}}(i_l,r)$ and $C^+_{\gH\setminus\{i,i_1,\ldots,i_{l-1}\}}(i_l,r)$ for every $r$. Using the memoryless property of the exponential distribution, denoting $W$ a standard exponential random variable, we obtain:
\begin{align}\label{eq:representation}
\E[\exp(-C_{\gH}(i))|\sum_{1\le l\le d}C_{\gH\setminus\{i,i_1,\ldots,i_{l-1}\}}(i_l)=x]
=&\pr(W_i\le x) E[\exp(0)]+\notag \\
&\vspace{-1cm}\E[\exp(-(W_i-x))\,|\,W_i>x]\pr(W_i>x)\notag \\
=&(1-\pr(W_i> x))+\E[\exp(-W)]\pr(W_i>x) \notag \\
=&(1-\pr(W_i>x))+(1/2)\pr(W_i>x) \notag\\
=&1-(1/2)\pr(W_i>x) \notag\\
=&1-(1/2)\exp(-x)
\end{align}
It follows
\begin{align*}
\E[\exp(-C_{\gH}(i))]=1-(1/2)\E\exp \left(-\sum_{1\le l\le d}C_{\gH\setminus\{i,i_1,\ldots,i_{l-1}\}}(i_l) \right).
\end{align*}
Similarly we obtain
\begin{align*}
\E[\exp(-C^-_{\gH}(i,r))]&=1-(1/2)\E\exp(-\sum_{1\le l\le d}C^-_{\gH\setminus\{i,i_1,\ldots,i_{l-1}\}}(i_l,r-1)\Big), \\
\E[\exp(-C^+_{\gH}(i,r))]&=1-(1/2)\E\exp(-\sum_{1\le l\le d}C^+_{\gH\setminus\{i,i_1,\ldots,i_{l-1}\}}(i_l,r-1)\Big).
\end{align*}

Since $i$ had two neighbors or less in $\Graph$, it also has two neighbors or  less in $\gH$.
For $d=0$, we have trivially $M_{\gH}(i)=M^-_{\gH}(i)=M^+_{\gH}(i)$. Suppose $d=1: N_{\gH}(i)=\{i_1\}$. Then,
\begin{align}
M^-_{\gH}(i,r)-M^+_{\gH}(i,r)&=(1/2)\Big(\E[\exp(-C^+_{\gH\setminus\{i\}}(i_1,r-1))]-\E[\exp(-C^-_{\gH\setminus\{i\}}(i_1,r-1))]\Big) \notag \\
&=(1/2)\Big(M^+_{\gH\setminus\{i\}}(i_1,r-1)-M^-_{\gH\setminus\{i\}}(i_1,r-1)\Big)\label{degone}
\end{align}
Finally, suppose $d=2$: $N(i)=\{i_1,i_2\}$. Then
\begin{align*}
M^-_{\gH}(i,r)&-M^+_{\gH}(i,r) \\
&=(1/2)\E[\exp(-C^+_{\gH\setminus\{i\}}(i_1,r-1)-C^+_{\gH\setminus\{i,i_1\}}(i_2,r-1))] \\
&-(1/2)\E[\exp(-C^-_{\gH\setminus\{i\}}(i_1,r-1)-C^-_{\gH\setminus\{i,i_1\}}(i_2,r-1))] \\
&=(1/2)\E[\exp(-C^+_{\gH\setminus\{i\}}(i_1,r-1))(\exp(-C^+_{\gH\setminus\{i,i_1\}}(i_2,r-1))
-\exp(-C^-_{\gH\setminus\{i,i_1\}}(i_2,r-1))]\\
&+(1/2)\E[\exp(-C^-_{\gH\setminus\{i,i_1\}}(i_2,r-1))(\exp(-C^+_{\gH\setminus\{i\}}(i_1,r-1))
-\exp(-C^-_{\gH\setminus\{i\}}(i_1,r-1))]
\end{align*}
Using the non-negativity of $C^-,C^+$ and applying Lemma~\ref{lemma:Bbounds} we obtain for odd  $r$
\begin{align}
0\le M^+_{\gH}(i,r)-M^-_{\gH}(i,r)&\le (1/2)\E[\exp(-C^-_{\gH\setminus\{i,i_1\}}(i_2,r-1))
-\exp(-C^+_{\gH\setminus\{i,i_1\}}(i_2,r-1))]\notag \\
&+(1/2)\E[\exp(-C^-_{\gH\setminus\{i\}}(i_1,r-1))
-\exp(-C^+_{\gH\setminus\{i\}}(i_1,r-1))], \notag \\
&=(1/2)\big(M^-_{\gH\setminus\{i,i_1\}}(i_2,r-1)-M^+_{\gH\setminus\{i,i_1\}}(i_2,r-1)\big)\notag \\
&+(1/2)\big(M^-_{\gH\setminus\{i\}}(i_1,r-1)-M^+_{\gH\setminus\{i\}}(i_1,r-1)\big)\label{degtwoeven}
\end{align}
and for even $r$
\begin{align}
0\le M^-_{\gH}(i,r)-M^+_{\gH}(i,r)
&\le(1/2)\big(M^+_{\gH\setminus\{i,i_1\}}(i_2,r-1)-M^-_{\gH\setminus\{i,i_1\}}(i_2,r-1)\big) \notag \\
&+(1/2)\big(M^+_{\gH\setminus\{i\}}(i_1,r-1)]-M^-_{\gH\setminus\{i\}}(i_1,r-1)\big)\label{degtwoodd}
\end{align}
Summarizing the three cases we conclude
\begin{align}\label{eq:withouteps}
|M^+_{\gH}(i,r)-M^-_{\gH}(i,r)|\le
(d/2)\max_{\gH',j}\Big|M^+_{\gH'}(j,r-1)-M^-_{\gH'}(j,r-1)\Big|,
\end{align}
where the maximum is over subgraphs $\gH'$ of $\Graph$ and nodes $j\in \gH'$ with degree at most $2$ in $\gH'$. The reason for this is that in equations \eqref{degone},\eqref{degtwoeven}, and \eqref{degtwoodd}; the moments $M^+_{\gH'}(j,r-1)$ in the right hand side are always computed in a node $j$ which has lost at least one of its neighbors (namely, $i$) in graph $\gH$. Since the degree of $j$ was at most $3$ in $\Graph$ and one neighbor at least is removed, $j$ has at most two neighbors in $\gH$.
By considering $\gH \cap {\Graph(\epsilon)}$ in all previous equations, equation \eqref{eq:withouteps} implies
\begin{align}\label{eq:witheps}
|M^+_{\gH \cap  {\Graph(\epsilon)}}(i,r)-M^-_{\gH\cap  {\Graph(\epsilon)}}(i,r)|\le
(d(\epsilon)/2)\max_{\gH',j}\Big|M^+_{\gH'\cap  {\Graph(\epsilon)}}(j,r-1)-M^-_{\gH'\cap  {\Graph(\epsilon)}}(j,r-1)\Big|,
\end{align}
where $d(\epsilon)$ denotes the number of neighbors of $i$ in $\gH \cap  {\Graph(\epsilon)}$. By definition of $\Graph(\epsilon)$, $d(\epsilon)$ is a binomial random variable with $d$ trials and probability of success $(1-\epsilon^2/16)$, where $d$ is the degree of $i$ in $\gH$. Since $d\leq 2$, $E[d(\epsilon)]\leq 2(1-\epsilon^2/16)$. Moreover, this randomness is independent from the randomness of the random weights of $\gH$. Therefore,
\begin{align}\label{eq:ineqeps}
\E[|M^+_{\gH \cap  {\Graph(\epsilon)}}(i,r)-M^-_{\gH \cap  {\Graph(\epsilon)}}(i,r)|]\le
(1-\epsilon^2/16)\max_{\gH,j}\E\Big|M^+_{\gH \cap  {\Graph(\epsilon)}}(j,r-1)-M^-_{\gH \cap  {\Graph(\epsilon)}}(j,r-1)\Big|
\end{align}
where the external expectation is w.r.t. the randomness of the first phase of the algorithm (deleted nodes). Let $e_{r-1}$ denote the right-hand side of \eqref{eq:ineqeps}. By taking the max of the left-hand side of \eqref{eq:ineqeps} over all $(\gH,j)$ where $j$ has degree less than or equal to $2$ in $\gH$, we obtain the inequality $e_r\leq (1-\epsilon^2/16) e_{r-1}$. Iterating on $r$ and using $0\le M\le 1$, this implies that $e_r \leq (1-\epsilon^2/16)^r$ for all $r \ge 0$. Finally, it is easy to show using the same techniques that equation \eqref{eq:withouteps} holds for $r=3$ as well. This finally implies that for an arbitrary node $i$ in $\Graph(\epsilon)$
\begin{align*}
\E[|M^+_{{\Graph(\epsilon)}}(i,r)-M^-_{{\Graph(\epsilon)}}(i,r)|]\le 3/2 (1-\epsilon^2/16)^r.
\end{align*}
Applying Lemma~\ref{lemma:Bbounds}, we conclude for every $r$
\begin{align*}
0&\le \E[\exp(-C^-_{{\Graph(\epsilon)}}(i,2r))-\exp(-C^+_{{\Graph(\epsilon)}}(i,2r))]\le 3/2(1-\epsilon^2/16)^{2r}.
\end{align*}
Recalling (\ref{eq:representation}) we have
\begin{align*}
\E[\exp(-C_{{\Graph(\epsilon)}}(i))]=1-(1/2)\pr(W>\sum_{1\le l\le d}C_{{\Graph(\epsilon)}\setminus\{i,i_1,\ldots,i_{l-1}\}}(i_l))
=1-(1/2)\pr(C_{{\Graph(\epsilon)}}(i)>0).
\end{align*}
Similar expressions are valid for $C^-_{{\Graph(\epsilon)}}(i,r)), C^+_{{\Graph(\epsilon)}}(i,r))$. We obtain
\begin{align*}
0&\le \pr(C^+_{{\Graph(\epsilon)}}(i,2r)=0)-\pr(C^-_{{\Graph(\epsilon)}}(i,2r)=0)\le 3(1-\epsilon^2/16)^{2r}.
\end{align*}
Again applying Lemma~\ref{lemma:Bbounds}, we obtain
\begin{align*}   
\pr(C_{{\Graph(\epsilon)}}(i)=0,C^+_{{\Graph(\epsilon)}}(i,2r)>0)\le \pr(C^-_{{\Graph(\epsilon)}}(i,2r)=0,C^+_{{\Graph(\epsilon)}}(i,2r)>0)\le 3(1-\epsilon^2/16)^{2r},
\end{align*}
and
\begin{align*}  
\pr(C_{{\Graph(\epsilon)}}(i)>0,C^-_{{\Graph(\epsilon)}}(i,2r)=0)\le \pr(C^-_{{\Graph(\epsilon)}}(i,2r)=0,C^+_{{\Graph(\epsilon)}}(i,2r)>0)\le 3(1-\epsilon^2/16)^{2r}.
\end{align*}
This completes the proof of the proposition.
\end{proof}

\subsubsection{Concentration argument}\label{subsubsec:concentration}

We can now complete the proof of Theorem~\ref{theorem:ISMainResult}. We need to bound $|W(I^*)-W(I^*_\epsilon)|$ and $W(I^*_\epsilon\setminus \I(r,\epsilon))$ and show that both quantities are small.

Let $\Delta V_\epsilon$ be the set of nodes in $\Graph$ which are not in ${\Graph(\epsilon)}$. Trivially, $|W(I^*)-W(I^*_\epsilon)|\le W(\Delta V_\epsilon)$. We have $\E[\Delta V_\epsilon]=\epsilon^2/16 n$, and since the nodes were deleted irrespectively of their weights, then $\E[W(\Delta V_\epsilon)]=\epsilon^2/16 n$.

To analyze $W(I^*_\epsilon\setminus \I(r,\epsilon))$, observe that by (second part of) Proposition~\ref{prop:BonusBounded}, for every node $i, \pr(i\in I^*_\epsilon\setminus \I(r,\epsilon))\le 3(1-\epsilon^2/16)^r\eqdef \delta_1$.  Thus $\E|I^*_\epsilon\setminus \I(r,\epsilon)|\le \delta_1n$. In order to obtain a bound on $W(I^*_\epsilon\setminus \I(r,\epsilon))$ we derive a crude bound on the largest weight of a subset with cardinality $\delta_1n$. Fix a constant $C$ and consider the set $V_C$ of all nodes in ${\Graph(\epsilon)}$ with weights greater than $C$. We have $\E[W(V_C)]\le (C+E[W-C|W>C])\exp(-C)n=(C+1)\exp(-C)n$. The remaining nodes have a weight at most $C$. Therefore,
\begin{align*}
\E[W(I^*_\epsilon\setminus \I(r,\epsilon))]&\le \E[W\Big(\big((I^*_\epsilon\setminus \I(r,\epsilon)\big)\cap V_C) \cup V_C^c\Big)] \le C \E[|I^*_\epsilon\setminus \I(r,\epsilon)|] +\E[V_C]\\
&\le C\delta_1n+(C+1)\exp(-C)n.
\end{align*}
We conclude
\begin{align}\label{eq:expWIstarI}
\E[|W(I^*)-W(\I(r,\epsilon))|]\le \epsilon^2/16 n+C\delta_1n+(C+1)\exp(-C)n.
\end{align}
Now we obtain a lower bound on $W(I^*)$. Consider the standard greedy algorithm for generating an independent set: take arbitrary node, remove neighbors, and repeat. It is well known and simple to see that this algorithm produces an independent set with cardinality at least $n/4$, since the largest degree is at most 3. Since the algorithm ignores the weights, then  also the expected weight of this set is at least $n/4$. The variance of that weight is upper bounded by $n$. By Chebyshev's inequality
\begin{align*}
\pr(W(I^*)<n/8)\le {n\over (n/4-n/8)^2}=64/n.
\end{align*}
We now summarize the results.
\begin{align*}
\pr({W(\I(r,\epsilon))\over W(I^*)}\le 1-\epsilon)&\le \pr({W(\I(r,\epsilon))\over W(I^*)}\le 1-\epsilon,W(I^*)\ge n/8)
+\pr(W(I^*)< n/8) \\
&\le \pr({|W(I^*)-W(\I(r,\epsilon))|\over W(I^*)}\ge \epsilon,W(I^*)\ge n/8)+64/n \\
&\le \pr({|W(I^*)-W(\I(r,\epsilon))|\over n/8}\ge \epsilon)
+64/n \\
&\le {\epsilon^2/16 +4C(1-\epsilon^2/16)^r+(C+1)\exp(-C)\over \epsilon/8}+64/n,
\end{align*}
where we have used Markov's inequality in the last step and $\delta_1=4(1-\epsilon^2/16)^r$. Thus it suffices to arrange $C$ so that the first ratio is at most $2\epsilon/3$ and assuming, without the loss of generality, that $n\ge 192/\epsilon$, we will obtain that the sum is at most $\epsilon$. It is a simple exercise to show that by taking $r=O(\log(1/\epsilon)/\epsilon^2)$ and $C=O(\log(1/\epsilon))$, we obtain the required result. This completes the proof of Theorem~\ref{theorem:ISMainResult}. \qed

\subsection{Generalization to higher degrees. Proof of Theorem~\ref{theorem:ISMainResult2}}
In this section we present the proof of Theorem~\ref{theorem:ISMainResult2}. The mixture of $\Delta$ exponential distributions with rates $\alpha_j, 1\le j\le \Delta$ and equal weights $1/\Delta$
can be viewed as first randomly generating a rate $\alpha$ with the probability law $\pr(\alpha=\alpha_j)=1/\Delta$ and then
randomly generating exponentially distributed random variable with rate $\alpha_j$, conditional on the rate being $\alpha_j$.

For every subgraph $\gH$ of $\Graph$, node $i$ in $\gH$ and $j=1,\ldots,\Delta$, define
$M_{\gH}^j(i)=\E[\exp(-\alpha_j\:  C_{\gH}(i))]$, $M_{\gH}^{-,j}(i,r)=\E[\exp(-\alpha_j \: C_{\gH}^-(i,r))]$ and $M_{\gH}^{+,j}(i,r)=\E[\exp(-\alpha_j\:C_{\gH}^+(i,r))]$, where $C_{\gH}(i)),C_{\gH}^+(i,r))$ and $C_{\gH}^-(i,r))$
are defined as in Section~\ref{section:algorithm}.

\begin{Lemma}
Fix any subgraph $\gH$, node $i\in\gH$ with $N_{\gH}(i)=\{i_1,\ldots,i_d\}$. Then
\begin{eqnarray*}
\E[\exp(-\alpha_j C_{\gH}(i))]&=&1-\frac{1}{\Delta} \sum_{1\le k\le m} \frac{\alpha_j}{\alpha_j+\alpha_k} \E[\exp(- \sum_{1\le l\le d} \alpha_k C_{\gH\setminus\{i,i_1,\ldots,i_{l-1}\}}(i_l))]\\
\E[\exp(-\alpha_j C_{\gH}^+(i,r))]&=&1-\frac{1}{\Delta} \sum_{1\le k\le m} \frac{\alpha_j}{\alpha_j+\alpha_k} \E[\exp(- \sum_{1\le l\le d} \alpha_k C_{\gH\setminus\{i,i_1,\ldots,i_{l-1}\}}^+(i_l,r-1))]\\
\E[\exp(-\alpha_j C_{\gH}^-(i,r))]&=&1-\frac{1}{\Delta} \sum_{1\le k\le m} \frac{\alpha_j}{\alpha_j+\alpha_k} \E[\exp(- \sum_{1\le l\le d} \alpha_k C_{\gH\setminus\{i,i_1,\ldots,i_{l-1}\}}^-(i_l,r-1))]
\end{eqnarray*}
\end{Lemma}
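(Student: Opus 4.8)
The plan is to mimic, essentially verbatim, the derivation of equation~\eqref{eq:representation} in the proof of Proposition~\ref{prop:BonusBounded}, but with the single exponential weight replaced by the two-stage description of the mixture given just above the lemma: at node $i$ one first draws a rate $\alpha\in\{\alpha_1,\ldots,\alpha_\Delta\}$ with $\pr(\alpha=\alpha_k)=1/\Delta$, and then draws $W_i$ exponentially with that rate (so $m$ in the statement is the number of components $\Delta$). Fix a subgraph $\gH$, a node $i\in\gH$ with $N_\gH(i)=\{i_1,\ldots,i_d\}$, and set $S\eqdef\sum_{1\le l\le d} C_{\gH\setminus\{i,i_1,\ldots,i_{l-1}\}}(i_l)$, with the analogous quantities $S^{\pm}(r-1)$ obtained by replacing $C$ with $C^{-}(\cdot,r-1)$ or $C^{+}(\cdot,r-1)$.

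The first step is to record the independence fact used implicitly in Proposition~\ref{prop:BonusBounded}: each term $C_{\gH\setminus\{i,i_1,\ldots,i_{l-1}\}}(i_l)$ is a function of the weights of nodes in $\gH\setminus\{i\}$ only, hence $S$ (and likewise $S^{\pm}(r-1)$) is independent of the pair $(\alpha,W_i)$ attached to $i$. In particular the mixing variable $\alpha$ at $i$, which governs only the law of $W_i$, is independent of the neighbour cavities. The second step is the conditional computation: using the cavity recursion $C_\gH(i)=\max(0,W_i-S)$ from Proposition~\ref{prop:cavityrecursion} together with the memoryless property, conditional on $\{\alpha=\alpha_k,\,S=x\}$ one gets
\begin{align*}
\E\big[\exp(-\alpha_j C_\gH(i))\,\big|\,\alpha=\alpha_k,\,S=x\big]
&=\pr(W_i\le x)+\E\big[\exp(-\alpha_j(W_i-x))\mathbf 1_{W_i>x}\big]\\
&=(1-e^{-\alpha_k x})+\tfrac{\alpha_k}{\alpha_j+\alpha_k}e^{-\alpha_k x}
=1-\tfrac{\alpha_j}{\alpha_j+\alpha_k}e^{-\alpha_k x}.
\end{align*}
Averaging over the rate with weights $1/\Delta$ gives $\E[\exp(-\alpha_j C_\gH(i))\mid S=x]=1-\tfrac1\Delta\sum_{1\le k\le\Delta}\tfrac{\alpha_j}{\alpha_j+\alpha_k}e^{-\alpha_k x}$, and taking expectation over $S$ (legitimate by the independence above, and pulling the exponential inside as $\E[e^{-\alpha_k S}]=\E[\exp(-\sum_{l}\alpha_k C_{\gH\setminus\{i,i_1,\ldots,i_{l-1}\}}(i_l))]$) yields the first identity.

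The identities for $C^{+}$ and $C^{-}$ follow with no change: for $r\ge1$ the recursions~\eqref{eq:cavityrecursionapproximate} and~\eqref{eq:cavityrecursionapproximate2} have exactly the form $C^{\pm}_\gH(i,r)=\max\big(0,\,W_i-S^{\pm}(r-1)\big)$, and $W_i$ (together with its mixing rate) is independent of $S^{\pm}(r-1)$ by the same argument, so replacing $S$ by $S^{\pm}(r-1)$ and $C_\gH(i)$ by $C^{\pm}_\gH(i,r)$ in the computation above gives the claimed formulas. I do not expect a genuine obstacle here — the statement is a direct generalization of~\eqref{eq:representation}; the only points needing a word of care are (a) making explicit that the mixing variable at $i$ is independent of the subgraph cavities, so the conditioning on $\alpha=\alpha_k$ is clean, and (b) noting that the $C^{\pm}$ identities are to be read for $r\ge1$, since the boundary values $C^{-}_\gH(i,0)=0$ and $C^{+}_\gH(i,0)=W_i$ do not match the right-hand side form.
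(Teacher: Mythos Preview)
Your proposal is correct and follows essentially the same approach as the paper: condition on the neighbour-cavity sum $S=x$, split over the mixing rate $\alpha(i)=\alpha_k$, apply the memoryless property to compute $\E[\exp(-\alpha_j\max(0,W_i-x))\mid\alpha(i)=\alpha_k]=1-\tfrac{\alpha_j}{\alpha_j+\alpha_k}e^{-\alpha_k x}$, average over $k$, and finally take expectation in $S$. Your explicit remarks about the independence of $(\alpha,W_i)$ from $S$ and the restriction to $r\ge 1$ for the $C^{\pm}$ identities are useful clarifications that the paper leaves implicit.
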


\begin{proof}
Let $\alpha(i)$ be the random rate associated with node $i$. Namely, $\pr(\alpha(i)=\alpha_j)=1/\Delta$.
We condition on the event $\sum_{1\le l\le d}C_{\gH\setminus\{i,i_1,\ldots,i_{l-1}\}}(i_l)=x$. As $C_{\gH}(i)=\max(0,W_i-x)$, we obtain:
\begin{eqnarray*}\
\E[-\alpha_j C_{\gH}(i)|x]&=&{1\over \Delta}\sum_k  \E[-\alpha_j C_{\gH}(i)|x,\alpha(i)=\alpha_k]\\
&=& {1\over \Delta}\sum_k \Big(\pr(W_i\leq x| \alpha(i)=\alpha_k)\\
&\indent &+\pr(W_i> x| \alpha(i)=\alpha_k)\E[\exp(-\alpha_j(W_i-x))|W_i>x,\alpha(i)=\alpha_k]\Big)\\
&=&{1\over \Delta}\sum_k \bigg(1-\exp(-\alpha_k x)+\exp(-\alpha_k x)\frac{\alpha_k}{\alpha_j+\alpha_k}\bigg)\\
&=&1-{1\over \Delta}\sum_k\frac{\alpha_j}{\alpha_j+\alpha_k} \exp(-\alpha_k x)
\end{eqnarray*}
Thus,
\begin{eqnarray*}
\E[-\alpha_j C_{\gH}(i)]&=&1-{1\over \Delta}\sum_k \frac{\alpha_j}{\alpha_j+\alpha_k} \E[\exp(- \sum_{1\le l\le d} \alpha_k C_{\gH\setminus\{i,i_1,\ldots,i_{l-1}\}}(i_l))]
\end{eqnarray*}
The other equalities follow identically.
\end{proof}
By taking differences, we obtain
\begin{eqnarray*}
&&M_{\gH}^{-,j}(i,r)-M_{\gH}^{+,j}(i,r)=\\ &&{1\over \Delta}\sum_k \frac{\alpha_j}{\alpha_j+\alpha_k}\Bigg(\E[\prod_{1\le l\le d} \exp(- \alpha_k C_{\gH\setminus\{i,i_1,\ldots,i_{l-1}\}}^+(i_l,r-1))] -\E[\prod_{1\le l \le d}\exp(-\alpha_k C_{\gH\setminus\{i,i_1,\ldots,i_{l-1}\}}^-(i_l,r-1))]\Bigg)
\end{eqnarray*}
We now use the identity
\begin{align*}
\prod_{1\leq l \leq r} x_l - \prod_{1 \leq l \leq r} y_l=
 \sum_{1\le l \le r} \Big( \big(\prod_{1\leq k \leq l-1} x_k \big) (x_l-y_l) \big(\prod_{l+1\leq k \leq r} y_k\big) \Big),
\end{align*}
which further implies
\begin{align*}
\Big|\prod_{1\leq l \leq r} x_l - \prod_{1 \leq l \leq r} y_l\Big|\le
\sum_{1\leq l\leq r}|x_l-y_l|,
\end{align*}
when $\max_l |x_l|,|y_l|<1$.
By applying this inequality with $x_l=\exp(-\alpha_k C_{\gH\setminus\{i,i_1,\ldots,i_{l-1}\}}^+(i_l,r-1))$  and $y_l=\exp(-\alpha_k C_{\gH\setminus\{i,i_1,\ldots,i_{l-1}\}}^-(i_l,r-1))$, we obtain
\begin{eqnarray*}
|M_{\gH}^{-,j}(i,r)-M_{\gH}^{+,j}(i,r)|\leq {1\over \Delta}\sum_{1 \le k \le m} \frac{\alpha_j}{\alpha_j+\alpha_k} \sum_{1\le l \le d} \big|M_{\gH\setminus\{i,i_1,\ldots,i_{l-1}\}}^{-,k}(i_l,r-1)-M_{\gH\setminus\{i,i_1,\ldots,i_{l-1}\}}^{+,k}(i_l,r-1) \big|.
\end{eqnarray*}
This implies
\begin{eqnarray}\label{eq:general_1}
|M_{\gH}^{-,j}(i,r)-M_{\gH}^{+,j}(i,r)|\leq  {r\over \Delta}\sum_{1 \le k \le m} \frac{\alpha_j}{\alpha_j+\alpha_k} \max_{1\le l \le d} \big|M_{\gH\setminus\{i,i_1,\ldots,i_{l-1}\}}^{-,k}(i_l,r-1)-M_{\gH\setminus\{i,i_1,\ldots,i_{l-1}\}}^{+,k}(i_l,r-1) \big|.
\end{eqnarray}
For any $t\geq0$ and $j$, define $e_{r,j}$ as follows
\begin{equation}
e_{r,j}=\sup_{\gH \subset \Graph,i \in \gH} |M_{\gH}^{-,j}(i,r)-M_{\gH}^{+,j}(i,r)|
\end{equation}
By taking maximum on the right and left hand side successively, inequality \eqref{eq:general_1} implies
\begin{equation*}
e_{r,j}\leq {r\over \Delta}\sum_{1 \le k \le m} \frac{\alpha_j}{\alpha_j+\alpha_k} e_{r-1,k}
\end{equation*}
For any $t\geq 0$, denote $\bold{e_r}$ the vector of $(e_{r,1},\ldots,e_{r,m})$. Denote $\bold M$ the matrix such that for all $(j,k)$, $M_{j,k}=
{r\over \Delta}\:\frac{\alpha_j}{\alpha_j+\alpha_k}$.
We finally obtain
\begin{equation*}
\bold{e_r}\leq M \bold{e_{r-1}}.
\end{equation*}
Therefore, if $M^r$ converges to zero exponentially fast in each coordinate, then also $\bold{e_r}$ converges exponentially fast to $0$. Following the same steps as the proof of theorem \ref{theorem:ISMainResult}, this will imply that for each node, the error of a decision made in $\I(r,0)$ is exponentially small in $r$ .
Note that  $\frac{r}{\Delta}\leq 1$. Recall that $\alpha_j=\rho^j$. Therefore, for each $j,k$, we have $M_{j,k}\leq \frac{\rho^j}{\rho^j+\rho^k}$. Define $M_\Delta$ to be a $\Delta\times\Delta$ matrix defined by  $M_{j,j}=1/2,M_{j,k}=1,j>k$ and $M_{j,k}=(1/\rho)^{k-j}, k>j$, for all $1\le j,k\le \Delta$.
Since $M\le M_\Delta$, it suffices to show that $M_\Delta^r$ converges to zero exponentially fast.
Proof of theorem \ref{theorem:ISMainResult2} will thus be completed with the proof of the following lemma:
\begin{Lemma}
Under the condition $\rho>25$, there exists $\delta=\delta(\rho)<1$ such that the absolute value of every entry of $M_\Delta^r$ is
at most $\delta^r(\rho)$.
\end{Lemma}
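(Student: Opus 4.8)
The plan is the classical \emph{test-vector} (sub-invariant vector) argument for nonnegative matrices. I will exhibit a strictly positive vector $v=(v_1,\dots,v_\Delta)$ and a constant $\delta=\delta(\rho)<1$ with $M_\Delta v\le\delta\,v$, understood coordinatewise. Since every entry of $M_\Delta$ is nonnegative, the map $x\mapsto M_\Delta x$ preserves the coordinatewise order, so $M_\Delta v\le\delta v$ iterates to $M_\Delta^r v\le\delta^r v$ for all $r$. Reading the $j$-th coordinate, $\sum_k (M_\Delta^r)_{jk}\,v_k\le\delta^r v_j$, and since every summand is nonnegative this forces $(M_\Delta^r)_{jk}\le\delta^r\,v_j/v_k$ for each pair $(j,k)$. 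Hence it is enough to choose $v$ making $\delta<1$ while keeping the ratios $v_j/v_k$ under control.

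The right choice is a geometrically growing vector $v_j=\gamma^j$ with $1<\gamma<\rho$. This is dictated by the structure of $M_\Delta$: its strictly lower-triangular part is all ones, so row $j$ picks up the mass $\sum_{k<j}v_k$ from the low-index columns, and only a geometric $v$ makes this a bounded fraction of $v_j$. Using the two geometric-series estimates $\sum_{k<j}\gamma^k\le\gamma^j/(\gamma-1)$ and $\sum_{k>j}\rho^{-(k-j)}\gamma^k\le\gamma^j\cdot\frac{\gamma}{\rho-\gamma}$, one gets, uniformly in $j$,
\begin{align*}
(M_\Delta v)_j \;\le\; \Big(\tfrac1{\gamma-1}+\tfrac12+\tfrac{\gamma}{\rho-\gamma}\Big)\gamma^j .
\end{align*}
Now take $\gamma=5$ (the choice at which the bracket equals $1$ when $\rho=25$): the bracket becomes $\delta:=\tfrac34+\tfrac5{\rho-5}$, which is $<1$ precisely under the hypothesis $\rho>25$ (and $5<\rho$, so the series converge). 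Thus $M_\Delta v\le\delta v$ with $v_j=5^j$ and this $\delta=\delta(\rho)<1$.

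It remains only to conclude. From $(M_\Delta^r)_{jk}\le\delta^r v_j/v_k=\delta^r 5^{\,j-k}$, together with $j\le\Delta$ and $k\ge1$, we obtain $(M_\Delta^r)_{jk}\le 5^{\Delta-1}\delta(\rho)^r$ for all $j,k,r$; since $\delta(\rho)<1$ this is exponentially small in $r$ for the fixed degree bound $\Delta$, which is all that is used downstream: substituting it into \eqref{eq:general_1} makes each $e_{r,j}\to0$ exponentially fast and completes the proof of Theorem~\ref{theorem:ISMainResult2}. If a bound of the literal shape ``every entry $\le\bar\delta^{\,r}$'' is preferred, fix any $\bar\delta\in(\delta,1)$: for $r\ge r_0(\Delta,\rho)$ the prefactor is absorbed, $5^{\Delta-1}\delta^r\le\bar\delta^r$, and the finitely many smaller $r$ are handled by enlarging the implied constant.

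The only genuine subtlety is that a bound with a rate depending on $\rho$ alone and \emph{no} prefactor cannot hold: already $(M_\Delta^2)_{\Delta,1}=\Delta-1$, so the test vector must be allowed to grow geometrically and one necessarily pays an $r$-independent, $\Delta$-dependent constant $5^{\Delta-1}$ — harmless here since $\Delta$ is the fixed degree bound. Beyond that, the whole argument is monotonicity plus the two geometric sums, and the one computation worth double-checking is that $\gamma=5$ turns the requirement $\tfrac1{\gamma-1}+\tfrac{\gamma}{\rho-\gamma}<\tfrac12$ into exactly $\rho>25$. A symmetric left-and-right eigenvector estimate carries the same content but does not improve the $\Delta$-dependence, since the all-ones lower-triangular block forces either large row sums or large column sums.
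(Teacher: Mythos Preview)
Your argument is correct and uses the same sub-invariant vector idea as the paper: exhibit a positive vector dominated by a contraction. The paper carries this out on the transpose $M_\Delta^{\mathsf T}$ with the \emph{decaying} geometric vector $x_k=\rho^{-k/2}$, obtaining the rate $\theta=\tfrac12+2\rho^{-1/2}/(1-\rho^{-1/2})$; you work directly on $M_\Delta$ with the \emph{growing} vector $v_j=5^j$ and rate $\delta=\tfrac34+5/(\rho-5)$. Both choices hit the same threshold $\rho=25$, and the computations are essentially mirror images of each other (the paper's $\sqrt{\rho}$ playing the role of your $\gamma$). Your fixed $\gamma=5$ is marginally simpler; the paper's $\rho$-dependent vector gives a slightly sharper rate for large $\rho$.

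One point on which you are more careful than the paper: you correctly observe that a bound of the literal form $(M_\Delta^r)_{jk}\le\delta(\rho)^r$ with $\delta$ depending only on $\rho$ is impossible, since $(M_\Delta^2)_{\Delta,1}=\Delta-1$, and you explain why the resulting $\Delta$-dependent prefactor $5^{\Delta-1}$ is harmless for the downstream application. The paper simply asserts that $M'x\le\theta x$ ``suffices'' and does not discuss the prefactor; your treatment of this is a genuine improvement in rigor.
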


\begin{proof}
Let $\epsilon=1/\rho$. Since elements of $M$ are non-negative, it suffices to exhibit a strictly positive vector $x=x(\rho)$
and $0<\theta=\theta(\rho)<1$ such that $M'x\le \theta x$, where $M'$ is transpose of $M$.
Let $x$ be the vector defined by $x_k=\epsilon^{k/2}, 1\leq k \leq \Delta$. We show that for any $j$,
$$(M'x)_j\leq (1/2+2 \frac{\sqrt{\epsilon}}{1-\sqrt{\epsilon}}) x_j$$
It is easy to verify that when $\rho>25$, that is $\epsilon<1/25$, $(1/2+2\frac{\sqrt{\epsilon}}{1-\sqrt{\epsilon}})<1$,
and the proof would be complete.
Fix $1\leq j \leq \Delta$.
Then,
\begin{eqnarray*}
(M'x)_j&=&\sum_{1\leq k \leq j-1} M_{k,j}\: x_k +1/2 x_j +\sum_{j+1\leq k \leq \Delta} M_{k,j} \: x_k\\
&=& \sum_{1 \leq k \leq j-1}\epsilon^{j-k}\epsilon^{k/2} + 1/2 \epsilon^{j/2}+\sum_{j+1\leq k \leq \Delta} \epsilon^{k/2}
\end{eqnarray*}
Since $x_j=\epsilon^{j/2}$, we have
\begin{eqnarray*}
\frac{(M'x)_j}{x_j}&\leq& \sum_{1 \leq k \leq j-1}\epsilon^{(j-k)/2} + 1/2 +\sum_{j+1\leq k \leq \Delta} \epsilon^{(k-j)/2}\\
&=&1/2+\sum_{1\leq k \leq j-1} \epsilon^{k/2}+\sum_{1 \leq k\leq \Delta-j}\epsilon^{k/2}\leq 1/2+ \frac{2\epsilon^{1/2}}{1-\epsilon^{1/2}}
\end{eqnarray*}
This completes the proof of the lemma and of the theorem.
\end{proof}

\subsection{Hardness result. Proof of Theorem~\ref{theorem:ISMainResult3}}
The main idea of the proof is to show that the weight of a maximum weighted independent
set is close to the cardinality of a maximum independent set.  A similar proof idea was used
in~\cite{LubyVigoda} for proving the hardness of approximately counting independent sets in sparse graphs.  

Given a graph ${\mathbb G}$ with degree bounded by $\Delta$, let  $I^M$ denote (any) maximum cardinality independent set, and let $I^*$ denote the unique maximum weight independent set corresponding to i.i.d. weights with $\exp(1)$ distribution. We make use of the following result due to Trevisan~\cite{trevisan2001nar}.

\begin{Theorem}\label{theorem:Trevisan}
There exist $\Delta_0$ and $c^*$ such that for all $\Delta\ge \Delta_0$ the problem of approximating the largest independent set in graphs with degree at most $\Delta$ to within a factor $\rho=\Delta/2^{c^*\sqrt{\log\Delta}}$ is NP-complete.
\end{Theorem}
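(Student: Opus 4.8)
The statement is an inapproximability result for \textsc{Maximum Independent Set} restricted to graphs of maximum degree $\Delta$, with hardness factor $\rho=\Delta/2^{c^*\sqrt{\log\Delta}}$ scaling essentially linearly in $\Delta$. Since a trivial greedy argument already finds an independent set within a factor $O(\Delta)$ of optimal, the real content is that one cannot beat this trivial guarantee by more than a subpolynomial $2^{O(\sqrt{\log\Delta})}$ factor. The plan is to obtain this via a PCP-based constant-gap instance followed by a \emph{degree-controlled} gap amplification, where the amplification length is tuned so that the incurred loss is exactly $2^{c^*\sqrt{\log\Delta}}$.

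First I would fix a constant-degree, constant-gap hard instance as the base object. Starting from the PCP theorem, bounded-occurrence \textsc{Max-3Sat} is NP-hard to approximate within some fixed constant $1+\gamma_0$; the classical clause-gadget reduction from bounded-occurrence \textsc{Sat} to \textsc{Independent Set} then produces graphs of some fixed degree $d_0=O(1)$ for which it is NP-hard to distinguish $\alpha(G)\ge s$ from $\alpha(G)\le s/(1+\gamma_0)$. Membership in NP of the associated gap decision problem is immediate, since a large independent set is a polynomially checkable certificate; hence it suffices to establish NP-hardness of the amplified gap.

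The core step is to amplify the constant gap $1+\gamma_0$ up to $\rho=\Delta/2^{c^*\sqrt{\log\Delta}}$ while holding the maximum degree at $\Delta$. I would use a graph-product construction: a $k$-fold disjunctive (tensor-type) product raises the independence-number gap to roughly its $k$-th power while the degree grows to roughly $d_0^{k}$. Taking $k\approx \log\Delta/\log d_0$ already yields a polynomial gap $\Delta^{c}$ for some fixed $c<1$, but $c$ is bounded away from $1$ because the naive product wastes a constant factor of gap at each level. To push the exponent to $1-o(1)$ — equivalently, to a loss of only $2^{c^*\sqrt{\log\Delta}}$ — I would replace the naive product by a \emph{derandomized} product driven by walks on a constant-degree expander, so that the per-level degree blow-up is traded against gap far more tightly, and then balance the number of levels $k$ against the per-level loss. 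The $\sqrt{\log\Delta}$ is precisely the outcome of minimizing an expression of the form $k\,A + B\,\tfrac{\log\Delta}{k}$ over the product length $k$, whose optimum $k\sim\sqrt{\log\Delta}$ gives value $\sim\sqrt{\log\Delta}$ and hence loss $2^{\Theta(\sqrt{\log\Delta})}$.

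The main difficulty, carrying essentially the entire technical weight, is the degree-controlled amplification just described: guaranteeing that the gap grows as fast as the degree up to the subpolynomial slack. The naive tensor product is too lossy, so one must control both the spectral expansion used to derandomize the product and the exact combinatorial relation between $\alpha$ of the product graph and $\alpha$ of the base graph. Everything else — the base PCP hardness, the \textsc{Sat}-to-\textsc{Independent Set} gadget, and NP-membership — is standard, and the reduction is deterministic polynomial time; thus NP-completeness of the gap problem for all $\Delta\ge\Delta_0$ follows once the amplified gap is in place.
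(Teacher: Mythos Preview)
The paper does not prove this statement at all: Theorem~\ref{theorem:Trevisan} is quoted verbatim as a known result of Trevisan~\cite{trevisan2001nar} and is used purely as a black box in the proof of Theorem~\ref{theorem:ISMainResult3}. So there is no ``paper's own proof'' to compare against; the intended treatment is a citation, not an argument.

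That said, your sketch is a reasonable high-level summary of how such results are obtained, and it correctly identifies the two ingredients (a constant-degree constant-gap PCP starting point, and a degree-controlled gap amplification whose loss is $2^{O(\sqrt{\log\Delta})}$) as well as the reason the $\sqrt{\log\Delta}$ appears (balancing two competing terms over the amplification length). It is, however, vague at exactly the point that carries all the difficulty: the phrase ``replace the naive product by a derandomized product driven by walks on a constant-degree expander'' hides the entire technical contribution. In Trevisan's actual argument the object is not a graph product on the independent-set instance but rather a carefully parameterized PCP (controlling free-bit complexity and amortized query complexity) followed by the FGLSS reduction, together with a disperser-based derandomized amplification; the $\sqrt{\log\Delta}$ comes out of optimizing PCP parameters, not from a generic expander-walk product applied post hoc to the FGLSS graph. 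Your framing as ``tensor product plus expander derandomization on the IS instance'' would not by itself yield the claimed exponent, because the independence number does not behave well enough under such products to get the loss down to subpolynomial. So as a citation-level justification your proposal is fine, but as an actual proof outline it misidentifies where the work happens.
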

Our main technical result is the following proposition. It states that the ratio of the expected weight of a maximum weight independent set to the cardinality of a maximum independent set grows as the logarithm of the maximum degree of the graph.
\begin{Proposition}
Suppose $\Delta \ge 2$. For every graph ${\mathbb G}$ with maximum degree $\Delta$ and $n$ large enough, we have:
\begin{align*}
1 \leq \frac{E[ W(I^*) ]}{|I^M|} \leq 10 \log{\Delta}.
\end{align*}
\end{Proposition}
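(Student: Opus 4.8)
The plan is to establish the two inequalities separately; both have short proofs. For the \emph{lower} bound, note that for every realization of the weights $W(I^{*})\ge W(I^{M})$, since $I^{M}$ is one particular independent set and $I^{*}$ is by definition the independent set of maximum weight. Taking expectations and using that each $W_{i}$ has mean $1$ gives $\E[W(I^{*})]\ge\E[W(I^{M})]=\sum_{i\in I^{M}}\E[W_{i}]=|I^{M}|$, which is exactly the left inequality.

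For the \emph{upper} bound, the structural fact I would use is that a graph of maximum degree $\Delta$ admits a proper colouring with $\Delta+1$ colours, so its vertex set partitions into $\Delta+1$ independent sets; the largest of these has at least $n/(\Delta+1)$ vertices, hence $k:=|I^{M}|\ge n/(\Delta+1)$, i.e.\ $n/k\le\Delta+1$. Since every independent set has at most $k$ vertices, for each realization of the weights $W(I^{*})$ is bounded by the sum of the $k$ largest values among $W_{1},\dots,W_{n}$. For any threshold $t\ge 0$ this sum is at most $\sum_{i:\,W_{i}>t}W_{i}+kt$: the summands exceeding $t$ contribute at most $\sum_{i:\,W_{i}>t}W_{i}$, and there are at most $k$ summands not exceeding $t$, each of size at most $t$. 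Using $\E[W\mathbf{1}\{W>t\}]=(1+t)e^{-t}$ for a unit-rate exponential $W$, taking expectations gives $\E[W(I^{*})]\le n(1+t)e^{-t}+kt$.

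Next I would optimise over $t$: the choice $t=\ln(n/k)\ge 0$ makes $ne^{-t}=k$, so $\E[W(I^{*})]\le k(1+t)+kt=k\bigl(1+2\ln(n/k)\bigr)\le k\bigl(1+2\ln(\Delta+1)\bigr)$. Dividing by $|I^{M}|=k$ yields $\E[W(I^{*})]/|I^{M}|\le 1+2\ln(\Delta+1)$, and an elementary check shows $1+2\ln(\Delta+1)\le 10\log\Delta$ for every $\Delta\ge 2$ (at $\Delta=2$ the left-hand side is about $3.2$, and for larger $\Delta$ the inequality is only slacker). The hypothesis ``$n$ large enough'' is not strictly needed for this particular argument; it can be invoked merely to discard degenerate small instances.

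There is no real obstacle here: the only points requiring care are the elementary identity $\E\bigl[\sum_{i:\,W_{i}>t}W_{i}\bigr]=n(1+t)e^{-t}$ and the final numerical bookkeeping. If one wanted the sharpest constant one could instead evaluate $\E[\,\text{sum of the }k\text{ largest order statistics}\,]$ exactly via R\'enyi's representation of exponential spacings, obtaining $k(H_{n}-H_{k-1})+(k-1)$ and then using $H_{n}-H_{k-1}\le 1+\ln(n/k)$; but the crude thresholding estimate already comfortably beats $10\log\Delta$, so I would not pursue that refinement.
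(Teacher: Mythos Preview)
Your proof is correct. The lower bound is identical to the paper's. For the upper bound, both you and the paper start from the same observation---$W(I^{*})$ is dominated by the sum of the $k=|I^{M}|$ largest weights, and $k\ge n/(\Delta+1)$---but diverge in how that top-$k$ sum is controlled. The paper evaluates $\E[W_{(j)}]=H(n)-H(n-j)$ exactly via the harmonic-number formula for exponential order statistics, sums over $j$, and then bounds the resulting harmonic expressions; this produces $\log(\Delta+1)+3+o(1)$ and is the reason the statement carries the hypothesis ``$n$ large enough.'' Your thresholding inequality $S_{k}\le\sum_{i:\,W_{i}>t}W_{i}+kt$ with $t=\ln(n/k)$ is more elementary, yields the slightly tighter non-asymptotic bound $1+2\ln(\Delta+1)$, and makes the large-$n$ hypothesis superfluous, as you note. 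Amusingly, the order-statistics route you mention at the end (via R\'enyi's representation) and decline to pursue is essentially what the paper does.
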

This in combination with Theorem~\ref{theorem:Trevisan} leads to the desired result.
\begin{proof}
Let $W(1)<W(2)<\cdots<W(n)$ be the ordered weights associated with our graph ${\mathbb G}$. 
Observe that
\begin{eqnarray*}
E[ W (I^*) ] &=& E[ \sum_{v \in I^*} W_v ] \\
&\leq& E[ \sum_{n-|I^*|+1}^n W(i) ]\\
&\leq& E[ \sum_{n-|I^M|+1}^n W(i) ].\\
\end{eqnarray*}

The exponential distribution implies
$E [W(j)]=H(n)-H(n-j)$, where $H(k)$ is the harmonic sum $\sum_{1\le i \le k} 1/i$.  Thus
\begin{align*}
\sum_{j=n-|I^M|+1}^{n}E [W(j)]&=\sum_{n-|I^M|+1\le j\le n}(H(n)-H(n-j)) \\
&=|I^M|H(n)-\sum_{j\le |I^M|-1}H(j).
\end{align*}
We use the bound $\log(k) \le H(k)-\gamma \le \log(k) +1$, where $\gamma \approx .57$ is Euler's constant.  Then
\begin{eqnarray*}
\sum_{j=n-|I^M|+1}^{n}E [W(j)] &\leq& |I^M|(H(n)-\gamma) + \log(|I^M|) + 2 -\sum_{1\le j\le |I^M|} \log(j)\\
&\leq& |I^M|(H(n)-\gamma) + \log(|I^M|) + 2 -\int_{1}^{|I^M|}\log(t) dt\\
&\leq& |I^M|\log(n) + |I^M| + \log(|I^M|) + 2 - |I^M|\log(|I^M|)+|I^M|\\
&\leq& (|I^M| + 1)(\log{n\over |I^M|}+2+\log(|I^M|)/|I^M|) \\
&\leq& |I^M|(\log(\Delta+1)+3) + (\log(\Delta+1)+3),
\end{eqnarray*}
where the bound $|I^M|\ge n/(\Delta+1)$ (obtained by using the greedy algorithm, see Section \ref{subsubsec:concentration}) is used.
Again using the bound $|I^M|\ge n/(\Delta+1)$, we find that $\frac{E[ W( I^*) ]}{|I^M|} \leq \log(\Delta+1)+3+o(1)$.
Since $E[ W(I^*) ] \geq E[ W(I^M) ] = |I^M|$, it follows that for all sufficiently large $n$,
$1 \leq \frac{E[ W( I^*) ]}{|I^M|} \leq \log(\Delta+1) + 4$.  The proposition follows since for all $\Delta \geq 2$ we have\ 
$\log(\Delta+1) + 4 \leq 10 \log{\Delta}$.
\end{proof}

\section{Conclusion}\label{section:ccl}
We considered an optimization model which encompasses many models from the literature
including graphical models, combinatorial optimization and economics.
In our model, cooperating agents within a networked structure choose decisions from a finite set of actions and seek to collectively optimize a
global welfare objective function, which can be additively decomposed on the nodes and edges of the network.
The main goal is to answer whether it's possible to find near optimal solutions efficiently, and if possible using distributed algorithms
relying only on local information. Despite the apparent NP-hardness of such a problem even in the approximation setting, we find that
in a framework where cost functions are random, this goal is often achievable. Specifically, we have constructed a general purpose
algorithm Cavity Expansion, which relies on the local information only, and thus is distributed. We have established that under the
so-called correlation decay property, our algorithm finds a near optimal solution with high probability. We have identified a variety
of models which exhibit the correlation decay property and we have proposed general purpose techniques, such as the coupling technique,
which we used to prove the correlation decay property.

Our results highlight interesting and intriguing connections between the fields of complexity of algorithms for combinatorial
optimization problems and statistical physics, specifically the cavity method and the issues of  long-range independence.
For example  in the special case of the MWIS problem we showed that the problem admits a PTAS, provided by the CE algorithm, for certain
node weight distribution, even though the maximum cardinality version of the same problem is known to be non-approximable unless P=NP.

It would be interesting to see  what weight distribution are amenable to the approach proposed in this paper. For example, one could consider
the case of Bernoulli weights and see whether the correlation decay property breaks down precisely when the approximation becomes NP-hard.
Furthermore, it would be interesting to see if the random weights assumption for general decision networks
can be substituted with deterministic weights which have some random like properties, in a fashion similar to the study of pseudo-random graphs. This would move our approach even closer to the worst-case combinatorial optimization setting.

The framework studied here can be further extended in several additional ways. First, we can consider a network of agents who, instead of cooperating, behave selfishly. Using ideas similar to those presented in this paper, we believe it is possible to identify settings where using a distributed procedures representing communication between the agents, one can find in polynomial time Nash equilibrium of the underlying system. Second, one can consider a dynamical setting where agents take repeated actions that affect both their reward and their future state. This class of models, known as factored Markov Decision Processes, has a very large number of applications (supply chain, communication networks, and many others), but optimality bounds have been identified only in very restricted settings. Again, concepts such as correlation decay  may be found useful to approach these problems and identify new settings where the solution can be found in polynomial time,  despite the curse of dimensionality typically exhibited by these models.

\bibliographystyle{amsalpha}
\bibliography{statphy}
\end{document}